\theoremstyle{plain}
\newtheorem{theorem}{Theorem}
\newtheorem{corollary}[theorem]{Corollary}
\newtheorem{lemma}[theorem]{Lemma}
\newtheorem{proposition}[theorem]{Proposition}
\newtheorem{question}[theorem]{Question} 
\newtheorem{remark}[theorem]{Remark}
\theoremstyle{definition}
\newtheorem{definition}[theorem]{Definition}
\newcommand{\RL}{\mathbb{R}}
\newcommand{\Cc} { \mathcal{C}(c_1,c_2,c_3)}
\newcommand{\Z}{\mathbb{Z}}
\newcommand{\R}{\mathbb{R}}
\newcommand{\Cov}{\mathrm{Cov}}
\newcommand{\fx} {
\lfloor x_0 \rfloor
}
\newcommand{\nm} {
N_{\max}
}
\newcommand{\cptwo} {
C^{\prime}_2
}
\newcommand{\Un} {
U^{(n)}
}
\newcommand{\ctn}{
c_2^{\prime}(d,n)
}
\newcommand{\ctno}{
c_2^{\prime}(d,n+1)
}
\begin{document}

\title{On the monotonicity of discrete entropy for log-concave random vectors on $\mathbb{Z}^d$}
\author[1]{Matthieu Fradelizi}
\author[1]{Lampros Gavalakis\thanks{L.G. has received funding from the European Union's Horizon 2020 research and innovation program
 under the Marie Sklodowska-Curie grant agreement No 101034255 {\large \euflag} and by the B{\'e}zout Labex, funded by ANR, reference ANR-10-LABX-58.}}
 \author[1]{Martin Rapaport}
 \affil[1]{Univ Gustave Eiffel, Univ Paris Est Creteil, CNRS, LAMA UMR8050 F-77447 Marne-la-Vall{\'e}e, France}

\maketitle

\abstract{ We prove the following type of discrete entropy monotonicity for
sums of isotropic, log-concave, independent and identically distributed random vectors $X_1,\dots,X_{n+1}$ on $\mathbb{Z}^d$:
$$
H(X_1+\cdots+X_{n+1}) \geq  H(X_1+\cdots+X_{n}) + \frac{d}{2}\log{\Bigl(\frac{n+1}{n}\Bigr)} +o(1),
$$
where $o(1)$ vanishes as $H(X_1) \to \infty$. Moreover, for the $o(1)$-term, we obtain a rate of convergence $ O\Bigl({H(X_1)}{e^{-\frac{1}{d}H(X_1)}}\Bigr)$, where the implied constants depend on $d$ and $n$. This generalizes to $\mathbb{Z}^d$ the one-dimensional result of the second named author (2023). As in dimension one, our strategy is to establish that the discrete entropy $H(X_1+\cdots+X_{n})$ is close to the differential (continuous) entropy $h(X_1+U_1+\cdots+X_{n}+U_{n})$, where $U_1,\dots, U_n$ are independent and identically distributed uniform random vectors on $[0,1]^d$ and to apply the theorem of Artstein, Ball, Barthe and Naor (2004) on the monotonicity of differential entropy. In fact, we show this result under more general assumptions than log-concavity, which are preserved up to constants under convolution. Namely, we consider families of random variables for which, as the determinant of the covariance matrix increases, the probability mass function {\bf i)} is bounded above in terms of the the determinant of the covariance matrix, {\bf ii)} has subexponential tails, {\bf iii)} has (discrete) bounded variation. In order to show that log-concave distributions satisfy our assumptions in dimension $d\ge2$, more involved tools from convex geometry are needed because a suitable position is required. We show that, for a log-concave function on $\R^d$ in isotropic position, its integral, barycenter and covariance matrix are close to their discrete counterparts. Moreover, in the log-concave case, we weaken the isotropicity assumption to what we call \textit{almost isotropicity}. One of our technical tools is a discrete analogue to the upper bound on the isotropic constant of a log-concave function, which extends to dimensions $d\ge1$ a result of Bobkov, Marsiglietti and Melbourne (2022) in dimension one and may be of independent interest. 
}\\

\noindent
{\small
{\bf Keywords --- } 
entropy,
discrete log-concavity, entropy power inequality, 
central limit theorem, convex bodies, isotropic constant
}

\medskip

\noindent
{\bf 2020 Mathematics subject classification --- }
94A17; 52C07; 39B62

\section{Introduction}
\subsection{Monotonicity of entropy}

The {\it differential entropy} of an $\R^{d}$-valued random vector $X$ with density $f$ is defined as
$$
h(X)=-\int_{\RL^d} f(x) \log f(x) dx ,
$$
if the integral exists. When $X$ is supported on a strictly lower-dimensional
set than $\R^d$ (and hence does not have a density with respect to the
$d$-dimensional Lebesgue measure), one sets $h(X) =-\infty$. The {\it entropy power} of $X$ is defined by
$N(X)=e^{2h(X)/d}$. 

Let $X_1,\ldots,X_n$ be i.i.d. continuous random variables. A generalization of the classical Entropy Power Inequality (EPI) of Shannon \cite{shannon1948mathematical} and Stam \cite{stam}, due to Artstein, Ball, Barthe and Naor, \cite{artstein} states that

\begin{equation*}
    h\Bigl(\frac{1}{\sqrt{n+1}}\sum_{i=1}^{n+1}X_{i}\Bigr)\geq h\Bigl(\frac{1}{\sqrt{n}}\sum_{i=1}^{n}X_{i}\Bigr),
\end{equation*}
which by the scaling property of differential entropy is equivalent to 

\begin{equation} \label{cntsepid}
h\Bigl(\sum_{i=1}^{n+1}{X_i}\Bigr) \geq h\Bigl(\sum_{i=1}^{n}{X_i}\Bigr) + \frac{d}{2}\log{\Bigl(\frac{n+1}{n}\Bigr)}.
\end{equation}
It can also be interpreted as the monotonic increase of entropy along the Central Limit Theorem (CLT) \cite{barronentropy}. Later, other proofs and generalizations were given by Shlyakhtenko \cite{shlyak}, Tulino and Verd\'u \cite{tulino} and  Madiman and Barron \cite{mokshayepi}.  

Let $A$ be a discrete (finite or countable) set and let $X$ be a random variable supported on $A$ with probability mass function (p.m.f.) $p$ on $A$. The discrete (Shannon) entropy of $X$ is 
$$
H(X) = -\sum_{x \in A}{p(x)\log{p(x)}}.
$$
An exact analogue of \eqref{cntsepid} cannot be true for discrete random variables as can be seen by taking $n=1, d=1$ and considering deterministic or even close to deterministic random variables. Nevertheless, Tao \cite{tao_sumset_entropy}, using ideas from additive combinatorics, proved that for any independent and identically distributed random variables $X_1,X_2$ taking values in a finite subset of a torsion-free (abelian) group
\begin{equation} \label{taoEPI}
H(X_1+X_2) \geq H(X_1) + \frac{1}{2}\log{2} - o(1),
\end{equation}
where $o(1) \to 0$ as $H(X_1) \to \infty$. This can be seen as a discrete analogue of \eqref{cntsepid} for $d=n=1$.
Besides \eqref{taoEPI}, other discrete versions of the EPI have been studied by several authors. In \cite{adaptivesensing,abbe}, a discrete EPI similar to \eqref{taoEPI} was proved, with a worse constant than $\frac{1}{2}{\log{2}}$, but with sharp quantitative bounds for the $o(1)$-term (see also \cite{logconcaveisit} for a detailed discussion on connections with that work). An EPI for the binomial family was proved in \cite[Theorem 1.1]{harremoesepibinomial}; in \cite[Theorem 4.6]{madimancyclic} it was shown that   
$e^{2H(X)} -1$
is superadditive with respect to convolution on the class of uniform distributions on finite subsets of the integers (see also \cite{madimanbernoulli} for extensions to R{\'e}nyi entropy and \cite{madimanmajorization} for dimensional extensions). Finally the constant $\frac{1}{2}\log{2}$ was shown to be improvable for the class of uniform distributions in \cite{woo2015discrete}.

In addition to \eqref{taoEPI}, Tao conjectured that for any independent and identically distributed random variables $X_1,\ldots,X_{n+1}$ taking values in a finite subset of a torsion-free group
\begin{equation} \label{Taoconjecture}
H\Bigl(X_1 + \cdots +X_{n+1}\Bigr) \geq H\Bigl(X_1 + \cdots +X_{n}\Bigr) + \frac{1}{2}\log{\Bigl(\frac{n+1}{n}\Bigr)} - o(1),
\end{equation}
as $H(X_1) \to \infty$ depending on $n$. A special case of this conjecture was recently proven in \cite{gavalakis}, where it is shown that \eqref{Taoconjecture} is satisfied by log-concave random variables on the integers with explicit rate for the $o(1)$ that is exponential in $H(X_1)$:
\begin{theorem}[\cite{gavalakis}] \label{onedTh}
Let $X_1,\ldots,X_n$ be i.i.d. log-concave random variables on $\mathbb{Z}$. Then
$$
H\Biggl(\sum_{i=1}^{n+1}{X_i}\Biggr) \geq H\Biggl(\sum_{i=1}^{n}{X_i}\Biggr) + \frac{1}{2}\log{\Bigl(\frac{n+1}{n}\Bigr)} - O\Bigl(H(X_1)e^{-H(X_1)}\Bigr),
$$
as $H(X_1) \to \infty$.
\end{theorem}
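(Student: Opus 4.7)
The plan is to lift the discrete setting to a continuous one by convolving with independent uniform noise, and then invoke the Artstein--Ball--Barthe--Naor monotonicity~\eqref{cntsepid}, which is the same route the abstract proposes to generalise to higher dimension. Concretely, introduce $U_1,\ldots,U_{n+1}$ i.i.d.\ uniform on $[0,1]$, independent of the $X_i$, and set $Y_i := X_i + U_i$, $S_n := X_1+\cdots+X_n$, $T_n := U_1+\cdots+U_n$, and $V_n := Y_1+\cdots+Y_n = S_n + T_n$. The $Y_i$ are i.i.d.\ continuous random variables (with density the step function equal to $p(k)$ on $[k,k+1)$, where $p$ denotes the p.m.f.\ of $X_1$), so~\eqref{cntsepid} with $d=1$ applied to $(Y_i)$ yields
\begin{equation*}
h(V_{n+1}) \,\geq\, h(V_n) + \tfrac{1}{2}\log\!\tfrac{n+1}{n}.
\end{equation*}
The theorem will then follow once one establishes the discrete-continuous comparison
\begin{equation*}
\bigl|\,h(V_n) - H(S_n)\,\bigr| \,=\, O\bigl(H(X_1)\,e^{-H(X_1)}\bigr)
\end{equation*}
as $H(X_1)\to\infty$, with implied constants depending on $n$.

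For this comparison, the natural starting point is the mixed chain-rule identity obtained by conditioning on $S_n$ and exploiting the independence of $S_n$ and $T_n$:
\begin{equation*}
h(V_n) \,=\, H(S_n) + h(T_n) - H(S_n \mid V_n).
\end{equation*}
Hence the task reduces to showing $H(S_n \mid V_n) = h(T_n) + O\bigl(H(X_1)e^{-H(X_1)}\bigr)$. Heuristically this is natural: a log-concave p.m.f.\ on $\mathbb{Z}$ with entropy $H$ has effective support of size $\asymp e^{H}$ and the consecutive ratios $p_n(k+1)/p_n(k)$ are very close to $1$ on the bulk, so on the $O(\sqrt{n})$ scale of $T_n$ the law of $S_n$ is essentially flat. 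Consequently, given $V_n = v$, the conditional law of $S_n$ is approximately the translate $k\mapsto g_n(v-k)$, where $g_n$ is the density of $T_n$, and its Shannon entropy is in turn close to $h(T_n)$ via the standard integral-sum comparison.

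The principal obstacle is to make this heuristic effective at the exponential rate $H(X_1)e^{-H(X_1)}$. This calls upon two log-concavity inputs: (i) the \emph{discrete max-density bound} $\|p_n\|_\infty \lesssim e^{-H(S_n)}$, a discrete analogue of the classical bound on the isotropic constant of a log-concave density; and (ii) the preservation of log-concavity under convolution on $\mathbb{Z}$, so that $p_n$ inherits the smoothness of $p$. Together these control the variation of $p_n$ on the scale of $\sqrt{n}$; a total-variation comparison between the posterior of $T_n$ given $V_n$ and its prior, upgraded to an entropy estimate via a Pinsker-type inequality, then gives the required quantitative bound. Combining the comparison with the ABBN step yields
\begin{equation*}
H(S_{n+1}) \,\geq\, h(V_{n+1}) - \varepsilon \,\geq\, h(V_n) + \tfrac{1}{2}\log\!\tfrac{n+1}{n} - \varepsilon \,\geq\, H(S_n) + \tfrac{1}{2}\log\!\tfrac{n+1}{n} - 2\varepsilon,
\end{equation*}
with $\varepsilon = O\bigl(H(X_1)e^{-H(X_1)}\bigr)$, which is precisely Theorem~\ref{onedTh}.
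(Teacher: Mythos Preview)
Your global strategy --- smooth by i.i.d.\ uniforms, apply~\eqref{cntsepid}, and prove $|h(V_n)-H(S_n)|=O(He^{-H})$ --- is exactly the paper's (this is Theorem~\ref{diffentropyapprox} for $d=1$, following~\cite{gavalakis}). Where you diverge is in the comparison step: the paper shows directly that the density of $V_n$ satisfies $f_n(x)=p_{S_n}(k)+g(k,x)$ on $[k,k+1)$ with $\sum_k\sup_x|g|=O(1/\sigma)$ (Lemma~\ref{densityapproxlemma}, obtained by induction on $n$ from the telescoping bound $\sum_k|p_n(k)-p_n(k-1)|\le 2\max p_n$ of Lemma~\ref{telescopinglemma}), and then converts this into an entropy bound via the pointwise Taylor-type estimate of Lemma~\ref{elementaryestimatelemma}. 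You instead use the chain rule $h(V_n)=H(S_n)+h(T_n)-H(S_n\mid V_n)$ and analyse the posterior. This alternative organisation is viable, but two points in your sketch need correction. First, for fixed $v$ the entropy $H(q_v)$ of $q_v(k)=g_n(v-k)$ is \emph{not} close to $h(T_n)$ (it depends on $v\bmod1$ and can vanish); what actually holds is the exact identity $\int_0^1 H(q_u)\,du=h(T_n)$, and since $\sum_k g_n(\cdot-k)\equiv1$ the wrapped density of $V_n$ on $[0,1)$ is identically $1$, whence $\mathbb{E}[H(q_{V_n})]=h(T_n)$ exactly --- no integral-sum comparison is needed. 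Second, to pass from a TV bound $\|\mathrm{post}_v-q_v\|_1\lesssim n/\sigma$ (this is where flatness of $p_n$ on the bulk enters) to an entropy bound you want Fannes-type continuity on the alphabet of size $\le n+1$, not Pinsker, which runs the wrong direction. With these two fixes your argument goes through, uses the same log-concavity inputs (max-density $\lesssim1/\sigma$, preservation under convolution, exponential tails) as the paper, and delivers the same $O((\log\sigma)/\sigma)=O(He^{-H})$ rate.
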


This statement can be interpreted as a type of ``approximate" monotonicity in the {\em discrete} entropic CLT \cite{discreteCLT}. We recall here that an integer valued random variable $X$ with p.m.f. $p$ is said to be {\it log-concave} if 
\begin{equation} \label{1dlogconcavedef}
    p(k)^2 \geq p(k-1)p(k+1),
\end{equation}
for every $k \in \mathbb{Z}.$

\subsection{Main results and paper outline}

Our goal is to extend Theorem \ref{onedTh} to higher dimensions and to the following larger class of probability distributions:

\begin{definition} \label{familydef}
Let $\Cc$ be the class of p.m.f.s $p$ on $\mathbb{Z}^d$ for which, denoting $\sigma := \det{\Bigl(\Cov{(p)}}\Bigr)^{\frac{1}{2d}}$,
\begin{enumerate} 
\item \label{A1} $$\max_{k \in \mathbb{Z}^d}{p(k)} \leq \frac{c_1}{\sigma^d},$$ 

\item \label{A2} for all $k \in \mathbb{Z}^d$ with $\|k\|_{\infty} \geq c_2 \sigma$, 
$$
p(k) \leq \frac{c_2}{\sigma^d}e^{-\frac{\|k\|_1}{c_2\sigma}} \quad \text{ and}
$$

\item \label{A3} $$\sum_{i=1}^d{\sum_{k \in \mathbb{Z}^d}}{|p(k)-p(k-e_i)|}\leq \frac{c_3}{\sigma}.$$
\end{enumerate}

\end{definition}

Our first main result is the following one: 
\begin{theorem} \label{EPI}
For any i.i.d. random vectors $X_1,\ldots,X_{n+1}$ on $\mathbb{Z}^d$ such that their common p.m.f. belongs to the class $\mathcal{C}(c_1,c_2,c_3)$, for some absolute constants $c_1, c_2, c_3$ that depend on the dimension only, 
\begin{equation} \label{eq:EPI}
H(X_1+\cdots+X_{n+1}) \geq  H(X_1+\cdots+X_{n}) + \frac{d}{2}\log{\Bigl(\frac{n+1}{n}\Bigr)} - o(1)
\end{equation}
as $H(X_1) \to \infty$. Moreover, we have the rate of convergence $o(1) = O_{d,n}\Bigl({H(X_1)}{e^{-\frac{1}{d}H(X_1)}}\Bigr)$.  In particular, \eqref{eq:EPI} holds for any i.i.d. log-concave random vectors $X_1,\ldots,X_{n+1}$ on $\mathbb{Z}^d$ with almost isotropic extension. 
\end{theorem}
{

It may be observed that \eqref{eq:EPI} is stronger than the original conjecture of Tao by a factor of $d$ in front of the logarithmic term. This is because of the almost isotropicity assumption which imposes that the random variables are really $d$-dimensional. In fact, without this assumption we still have the original statement under log-concavity as shown by the corollary below. Clearly, we may  not get the constant $d$ without such a dimensionality assumption, since we may always embed a one-dimensional vector in $\mathbb{Z}^d$ for any $d>1.$ 
\begin{corollary}
For any i.i.d. log-concave random vectors $X_1,\ldots,X_{n+1}$ on $\mathbb{Z}^d$
\begin{equation} 
H(X_1+\cdots+X_{n+1}) \geq  H(X_1+\cdots+X_{n}) + \frac{1}{2}\log{\Bigl(\frac{n+1}{n}\Bigr)} - o(1).
\end{equation}
Moreover, we have the rate of convergence $o(1) = O_{d,n}\Bigl({H(X_1)}{e^{-\frac{1}{d}H(X_1)}}\Bigr)$.
\end{corollary}
\begin{proof}
Note that
\begin{equation} \label{dataproc}
H(X_1+\cdots+X_{n+1}) -  H(X_1+\cdots+X_{n}) = I(X_1+\cdots+X_{n+1};X_{n+1}) \geq I(g(X_1+\cdots+X_{n+1});g(X_{n+1})),
\end{equation}
where $I(X;Y) = H(X) - H(X|Y)$ denotes the mutual information and the inequality holds for any function $g$ by the data processing inequality. 

Take $g:\mathbb{Z}^d\to \mathbb{Z}$ to be the first-coordinate map. It can be verified that if $X$ belongs to the class $\mathcal{C}$ for some constants, then each coordinate also belongs to the corresponding class in $\mathbb{Z}$ with appropriate constants depending on the initial ones. We may then apply the result for $d=1$ (where almost isotropicity is satisfied trivially) we obtain the result by \eqref{dataproc}.
\end{proof}
}
 In Subsection \ref{defsandnots}, we will explicitly detail the definitions we use for \textit{discrete log-concavity in $\mathbb{Z}^{d}$} as well as the definition of \textit{almost isotropic extension}.  
Theorem \ref{EPI} readily follows from the next theorem together with the generalized EPI in $\mathbb{R}^d$ and can be interpreted as an approximation result of discrete and continuous entropies in higher dimensions: 
\begin{theorem} \label{diffentropyapprox}
For any i.i.d. random vectors $X_1,\ldots,X_{n+1}$ on $\mathbb{Z}^d$ such that their common p.m.f. belongs to the class $\mathcal{C}(c_1,c_2,c_3)$, for some absolute constants $c_1, c_2, c_3$ that depend on the dimension only, 
\begin{equation} \label{diffapproxeq}
h(X_1+\cdots+X_n + U_1+\cdots+U_n) = H(X_1+\cdots+X_n) + o(1),
\end{equation}
where $o(1) \to 0$ as $H(X_1) \to \infty$, $h$ stands for the differential entropy, $H$ denotes the discrete Shannon entropy and $U_1,\ldots,U_n$ are independent continuous uniforms on $[0,1]^d$. Moreover, we have the rate of convergence $o(1) = O_{d,n}\Bigl({H(X_1)}{e^{-\frac{1}{d}H(X_1)}}\Bigr)$. In particular, \eqref{diffapproxeq} holds for any i.i.d. log-concave random vectors $X_1,\ldots,X_{n+1}$ on $\mathbb{Z}^d$ with almost isotropic extension. 

\end{theorem}

\begin{remark}
Let $X$ be a log-concave random vector on $\mathbb{Z}^d$  with almost isotropic extension. Denote the covariance matrix of $X$ by $K_X$, $\sigma^2 := \det{(K_X)}^{\frac{1}{d}}$ and let $U$ be a uniform on the open box $[0,1]^d$. Then 
\begin{equation} \label{gaussmax}
H(X) = h(X+U) \leq \frac{d}{2}\log{\biggl(\det{(K_X+\frac{1}{12}I_{\mathrm{d}})^{\frac{1}{d}}}2 \pi e\biggr)} \simeq \frac{d}{2}\log{\biggl(\sigma^2 2 \pi e\biggr)} ,
\end{equation}
by the fact that the Gaussian maximizes the entropy under fixed covariance matrix, which implies that for any random variable $X$ on $\R^d$ one has 
\begin{equation} \label{gaussmax-entropy}
h(X)\le \frac{d}{2}\log(2\pi e(\det\Cov(X))^{1/d}).
\end{equation}
Thus, $H(X) \to \infty$ implies $\sigma \to \infty$ and therefore it suffices to prove \eqref{diffapproxeq} with $o(1) \to 0$ as $\sigma \to \infty$.
\end{remark}

We prove Theorem \ref{diffentropyapprox} in Section \ref{relaxedsection}. First we use the second property of Definition \ref{familydef}, which is an exponential tail bound, in order to control the \textit{entropy tails}. We use the third property, which is a type of smoothness, to control the error term arising from the approximation of the density of $\sum_i{X_i + U_i}$ by the p.m.f. of $\sum_i{X_i}$. Combined with the Taylor-type estimate of Lemma \ref{elementaryestimatelemma}, which is analogous to an estimate used in \cite{tao_sumset_entropy}, this approximation is the main technical step in the proof of Theorem \ref{diffentropyapprox}. The first property of Definition \ref{familydef} ensures that Lemma \ref{elementaryestimatelemma} is applicable.

In Section \ref{logconcavesection} we will show that the assumption that the p.m.f. of $X_1$ satisfies these three properties is weaker than almost isotropic log-concavity in the sense that any almost isotropic log-concave p.m.f. on $\mathbb{Z}^d$ satisfies these properties with appropriate constants. We remark here that, since our definition of log-concavity reduces to the usual one for the one-dimensional case, Theorems \ref{EPI} and \ref{diffentropyapprox} are also relaxed versions of the corresponding results obtained in \cite{gavalakis}. We prove the following theorem, which allows us to establish that if $p$ is log-concave with almost isotropic extension then $p$ belongs to the class $\mathcal{C}\left(c_1(d),c_2(d),c_3(d)\right)$ for some absolute constants $c_1(d),c_2(d),c_3(d)$ that depend on the dimension only (in particular, this theorem is essential for proving item \ref{logconcave1} in Theorem  \ref{logconcavesatisfyTh}):
\begin{theorem} \label{discreteUB} 
Suppose $p$ is a log-concave p.m.f. on $\mathbb{Z}^d$ with almost isotropic extension and covariance matrix $\Cov{(p)}$. Then there exists an absolute constant $C^{\prime}$ (independent of the dimension), such that
$$
\max_{k \in \mathbb{Z}^d}{p(k)} \leq \frac{C^{\prime}}{\det\Bigl(\Cov_{\mathbb{Z}^d}(p)\Bigr)^{\frac{1}{2}}}
$$
provided that $\det\left(\Cov_{\mathbb{Z}^d}(p)\right)$ is large enough depending on $d$.
\end{theorem}

Theorem \ref{discreteUB} can be seen as $d$-dimensional analogue of the following result, due to Bobkov, Marsiglietti and Melbourne \cite{bobkov2022concentration}, who also studied discrete versions of the EPI (up to multiplicative constants) for R{\'e}nyi entropies of log-concave distributions and which is an important tool in the one-dimensional case:
\begin{theorem} \label{bobkovprop}
\cite[Theorem 1.1]{bobkov2022concentration}
If a random variable $X$ follows a discrete log-concave p.m.f. $f$ on $\mathbb{Z}$, then   
\begin{equation} \label{max-sigma}
 \max_{k \in \mathbb{Z}}f(k) \leq \frac{1}{\sqrt{1+4\sigma^2}}, 
\end{equation}
where $\sigma^2=\mathrm{Var}(X)$. 
\end{theorem}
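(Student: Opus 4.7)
Both sides of the stated inequality are invariant under integer translations of $X$, so I may assume WLOG that the mode of $f$ lies at $0$, and write $M := f(0) = \max_k f(k)$. The log-concavity condition $f(k)^2 \geq f(k-1)f(k+1)$ is equivalent to the ratios $r(k) := f(k+1)/f(k)$ being non-increasing in $k$; in particular $r(-1) \geq 1 \geq r(0)$, so $f$ is unimodal with peak at $0$ and decays geometrically away from the mode in the sense that $f(k) \leq M \, r(0)^k$ for $k \geq 0$ and $f(-k) \leq M\, r(-1)^{-k}$ for $k \geq 0$.

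The plan is to reduce, via an exchange/extremal argument, to a one-parameter family of log-concave pmfs for which the bound can be verified by direct computation. The natural candidates for the extremal family are the geometric-type distributions $f(k) \propto q^{|k|}$ (symmetric two-sided geometric), or one-sided geometric, for which the ratios $r(k)$ are piecewise constant; for these the relation between $M$ and $\sigma^2$ is explicit, and one can verify directly that the sharpest instance of the inequality is attained.

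To reduce the general case to this extremal family, I would use a mass-transport argument: if $f$ is not already in the extremal family, then there exist two indices with $r(k)$ strictly decreasing, and one can redistribute a small amount of mass between them in a manner that preserves discrete log-concavity and the value of $M$, while not increasing $\sigma^2$. Iterating this operation flows the distribution to the extremal family where the bound is known to hold.

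The main obstacle will be making the mass-transport step rigorous: the space of discrete log-concave pmfs is not convex, so perturbations must be chosen carefully to preserve the inequalities $f(k)^2 \geq f(k-1)f(k+1)$ at \emph{every} integer $k$. A cleaner alternative, which I would attempt first, is to interpolate to the continuum: define the continuous log-concave function $\tilde f : \mathbb{R} \to \mathbb{R}_+$ by letting $\log \tilde f$ be the piecewise-linear extension of $\log f$ from $\mathbb{Z}$ to $\mathbb{R}$. Then $\tilde f$ is genuinely log-concave on $\mathbb{R}$, $\|\tilde f\|_\infty = M$, and the quantities $\int \tilde f$, $\int x \tilde f$ and $\int x^2 \tilde f$ can be related to $1, \, \mathbb{E}X$ and $\mathbb{E}X^2$ through Hermite--Hadamard-type estimates applied on each unit interval. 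Applying the classical continuous inequality bounding $\|\tilde f\|_\infty$ in terms of $\mathrm{Var}(\tilde f / \|\tilde f\|_1)$ for log-concave densities, and then tracking the (controlled) discretization loss, should yield the discrete bound with the correct constant.
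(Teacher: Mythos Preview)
This theorem is not proved in the present paper: it is quoted as a known result of Bobkov, Marsiglietti and Melbourne, and the paper's own work is the $d$-dimensional analogue, Theorem~\ref{discreteUB}, established by a quite different route (extending $p$ to a continuous log-concave function, applying the continuous isotropic-constant bound $\max f\le L_f^d\int f/\sigma^d$, and then showing that the continuous and discrete integrals, barycenters and covariances agree up to lower-order terms). There is therefore no proof in this paper to compare your proposal against.

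On the proposal itself: what you have written is a plan rather than a proof, and you correctly flag the obstacles. Your preferred interpolation approach is in fact close in spirit to the paper's method for Theorem~\ref{discreteUB} and will certainly yield a bound of the shape $M\lesssim 1/\sigma$ for large~$\sigma$; but the Hermite--Hadamard comparisons on each unit interval introduce additive $O(1)$ errors when relating $\int x^2\tilde f$ to $\sum k^2 f(k)$, so this route cannot recover a sharp explicit constant such as $1/\sqrt{1+4\sigma^2}$---it gives the right order with an unspecified constant, which is exactly what the paper settles for in dimension~$d$. For the exact constant you would need the extremal reduction, and there the perturbation step is the real content: you have stated what the perturbation should do (preserve all the inequalities $f(k)^2\ge f(k-1)f(k+1)$ and the value of $M$, while not increasing $\sigma^2$) but not constructed it, nor argued that the flow terminates at the claimed extremal family. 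Before investing further, test the stated inequality on the one-sided geometric $p(k)=(1-q)q^k$, where $M=1-q$ and $\sigma^2=q/(1-q)^2$; this will tell you whether the geometric family is the right extremal target.
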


Theorem \ref{discreteUB} may also be seen as a discrete analogue of a dimensional upper bound on the isotropic constant $L_f$.
The corresponding (continuous) lower bound on $L_f$ can be deduced from \eqref{gaussmax-entropy} and the fact that $e^{-h(X)}\le\max(f)$:
\[
L_f=\left(\max_{\mathbb{R}^d}{f}\right)^{\frac{1}{d}}\mathrm{det}(\mathrm{Cov}(f))^{\frac{1}{2d}}\ge e^{-\frac{h(X)}{d}}\mathrm{det}(\mathrm{Cov}(f))^{\frac{1}{2d}}\ge\frac{1}{\sqrt{2\pi e}}.
\]
It is well known that a dimensional upper bound also exists and we denote by $L_d$ the maximum of the isotropic constants $L_f$ among log-concave functions $f$ in $\R^d$.
The latter is related to the famous hyperplane (or slicing) problem in convex geometry, a longstanding conjecture recently solved in a major breakthrough by Klartag and Lehec~\cite{klartaglehec}, following decisive progress by Guan~\cite{guan}. A second, independent proof was subsequently given by Bizeul~\cite{bizeul}. The hyperplane and isotropic constant theorems, formerly known as conjectures, are in fact equivalent, as shown in~\cite[Theorem 3.1.2]{Giannopoulos14}.\\

\begin{theorem}\cite[Theorem 1.1]{klartaglehec}
\label{klartgalehec}
There exists a universal constant $c > 0$ such that, for any dimension $d$, any convex body $K \subset \R^d$ in isotropic position and of volume $1$, and any direction $\theta \in \mathbb{S}^{d-1}$, one has
\begin{equation} \label{hyperplaneEq}
    |K \cap \theta^{\perp}|_{d-1} \geq c.
\end{equation}
Equivalently, there exists a universal constant $C > 0$ such that
\begin{equation} \label{isotropicEq}
    L_d \leq C.
\end{equation}
\end{theorem}

Henceforth, we will be referring to \eqref{hyperplaneEq} of Theorem \ref{klartgalehec} as the {\em Hyperplane Theorem} and to \eqref{isotropicEq} as the {\em Isotropic Constant Theorem}.

Our method for proving Theorem \ref{discreteUB} is to use the corresponding continuous result. 
To this end, we obtain the approximations 
\begin{align*}
\Bigl|\int_{\mathbb{R}^d}{f} - \sum_{\mathbb{Z}^d}{f}\Bigr| &= o_{d}(1), \quad \text{as } \sigma \to \infty, \\
\Bigl|\int_{\mathbb{R}^{d}}xf - \sum_{k\in \mathbb{Z}^{d}}kf(k)\Bigr|&= O_d(1), \quad \text{as } \sigma \to \infty, \\
\Bigl|\det\Bigl(\Cov_{\mathbb{Z}^d}{(f)}\Bigr)- \det\Bigl(\Cov_{\mathbb{R}^d}{(f)}\Bigr)\Bigr| &= O_d(\sigma^{2d-1}) , \quad \text{as } \sigma \to \infty,
\end{align*}
for any isotropic log-concave density $f$. This is done in Section \ref{cntsdiscretesection}. Before that, after recalling some know facts about Ball's bodies in Section \ref{ballbodiessec}, we prove the exponential tails bound in Section \ref{concentrationlemmasec}. Although our results hold under the more general almost isotropicity assumption, for better illustration of the ideas, we assume first in Sections \ref{ballbodiessec}---\ref{cntsdiscretesection} that the continuous extension $f$ is isotropic. In Section \ref{pisotropicsection} we show how to relax this assumption (see Remark \ref{remarkrelaxassumption}). 

Finally, in Section \ref{concludingsection} we conclude with a brief discussion of our assumptions as well as an open question.

\subsection{Notations and definitions} \label{defsandnots}
In this subsection, we will detail the notations as well as key definitions. \\

\label{secdefs}
{\bf Big- and small-$O$ notation. } Let $f$ be a real-valued function and $g$ another strictly positive function. 
We write $f = O(g)$ if there exist positive absolute constants $N, C$ such that $|f(x)| \leq Cg(x)$ for every $x \geq N$. 
If $N,C$ are absolute up to a parameter $d$, we write $f = O_d(g)$.
Analogously, we write $f = \Omega(g)$, if $|f(x)| \geq Cg(x)$ for every $x \geq N$. 
If $f = \Omega(g)$ and $f = O(g)$, we write $f = \Theta(g)$ (with the analogous definitions for $\Omega_d$ and $\Theta_d$).
When it is more convenient, we will write $f \lesssim_d g$ for $f = O_d(g)$ and $f \simeq_d g$ for $f = \Theta(g)$. We write $f(x) = o(g(x))$ if $\lim_{x\to \infty}\frac{f(x)}{g(x)} = 0$.

\vspace{0.5 cm}
\noindent
{\bf Convex bodies.} A \textit{convex body} is a convex set that is compact and has a non-empty interior. For a convex body $K\in \mathbb{R}^{d}$, one denotes by $|K|$ its volume. The \textit{convex hull} of a set $A$, denoted by $\mathrm{conv(A)}$, is the smallest convex set containing  the set $A$. 

\vspace{0.5 cm}
\noindent
{\bf Log-concavity and convexity in $\mathbb{Z}^d$.  }
For $d>1$, there are more than one definitions of log-concavity that have been used in different contexts. We will use a quite general definition that implies several other notions of log-concavity, the one of \textit{extensible log-concave functions}. The reader is referred to Murota \cite{Murota} for an extensive discussion of discrete convexity in higher dimensions.   
A function $f:\mathbb{Z}^{d}\rightarrow \mathbb{R}\cup \{+\infty\}$ is said to be \textit{convex-extensible} if there exists a convex function $\bar{f}:\mathbb{R}^{d}\rightarrow \mathbb{R}\cup \{+\infty\}$ such that 
\begin{equation*}
    \bar{f}(z)=f(z) \hspace{0.3cm} (\forall z\in \mathbb{Z}^{d}) \hspace{0.1cm} .
\end{equation*}
We refer to $\bar{f}$ as the {\em extension} of $f$.
Based on the above, we define
\begin{definition}[Log-concave extensible functions] \label{LCdef}
A function $f:\mathbb{Z}^{d}\rightarrow \mathbb{R_+}$ is said to be log-concave extensible if there exists $V:\mathbb{Z}^{d}\rightarrow \mathbb{R}\cup \{+\infty\}$ 
convex-extensible such that
\begin{equation*}
    f(z)=e^{-V(z)}.
\end{equation*}
Throughout when we say that $f$ is log-concave, we mean that $f$ is log-concave extensible. We say that a random vector $X$ with values in $\Z^d$ and p.m.f. $p:\Z^d \to [0,1]$ is log-concave, if $p$ is log-concave.

\end{definition}



Similarly, we define a $\mathbb{Z}^{d}$-convex set (commonly referred in the literature as \textit{hole-free} set \cite{Murota}):
\begin{definition}[$\mathbb{Z}^{d}$-convexity]
A set $A\subset \mathbb{Z}^{d}$ is said to be $\mathbb{Z}^{d}$-convex if and only if its indicator function  $\chi_{A}$,  defined by $\chi_{A}(x)=1$ if 
$x\in A$ and $0$ otherwise, is log-concave-extensible or equivalently if 
\begin{equation*}
    A=\mathrm{conv}(A)\cap \mathbb{Z}^{d} \hspace{0.1cm}.
\end{equation*}
Notice that the support of a log-concave function $f$ on $\Z^d$ is $\Z^d$-convex.
\end{definition}

\begin{remark}\label{murota} 
For $d=1$ our definition of log-concavity is equivalent to the usual definition $p(k)^2 \geq p(k-1)p(k+1), k \in \mathbb{Z}$, which is preserved under convolution (e.g. \cite{hoggar}). However, for $d > 1$ log-concavity may not be preserved in general, as pointed out by Murota \cite[Example 3.15]{Murota} by considering two log-concave distributions supported on 
$S_1 = \{(0,0), (1,1)\}$ and $S_2 = \{(0,1),(1,0)\}$ respectively. This is because the Minkowski sum $S_1+S_2$ is not $\mathbb{Z}^{2}$-convex.
On the other hand, if $S_1 = S_2 = S \subset \mathbb{Z}^2$ is $\mathbb{Z}^2$-convex, then $S+S$ is also $ \mathbb{Z}^2$-convex \cite[Corollary 2.54]{polytopesbook}. In $d \geq 3$ this is not true: consider the Reeve tetrahedron $S = \{(0,0,0),(1,0,0),(0,1,0),(1,1,2)\}$. Then, the set $S$ is $\mathbb{Z}^3$-convex, but $S+S$ does not contain the point $(1,1,1)$ and is thus not $\mathbb{Z}^3$-convex. 
 \end{remark}

\vspace{0.5 cm}
\noindent
{\bf Notion of isotropicity}. The isotropic constant of a function $f:\mathbb{R}^d \to \mathbb{R}_+$ is defined by  
\begin{equation} \label{Lfdef}
L_f := \left(\frac{\max_{\mathbb{R}^d}{f}}{\int_{\mathbb{R}^d}{f}}\right)^{\frac{1}{d}}\mathrm{det}(\mathrm{Cov}(f))^{\frac{1}{2d}},
\end{equation}
where $\Cov{(f)}$ is the \textit{inertia} or \textit{covariance} matrix defined, for $1 \leq i,j \leq d$, by
\begin{equation*}
    [\Cov{(f)}]_{ij}:=\frac{\int_{\mathbb{R}^{d}}x_{i}x_{j}f(x)dx}{\int_{\mathbb{R}^
    {d}}f(x)dx}-\frac{\int_{\mathbb{R}^{d}}x_{i}f(x)dx\int_{\mathbb{R}^{d}}x_{j}f(x)dx}{\Bigl(\int_{\mathbb{R}^{d}}f(x)dx\Bigr)^2} \hspace{0.1cm} .
\end{equation*}
We say that $f$ is {\em isotropic} if $\Cov{(f)} = \sigma^2\mathrm{I}_d$, for some $\sigma>0,$ where $\mathrm{I}_d$ is the $d \times d$ identity matrix. Let $K$ be a convex body in $\mathbb{R}^{d}$. Then its covariance matrix is $\Cov(K):=\Cov(\mathds{1}_K)$ and its isotropic constant is $L_K:=L_{\mathds{1}_K}$. The convex body $K$ is called \textit{isotropic} if $\mathds{1}_K$ is isotropic.

Similarly, in the discrete case $p:\mathbb{Z}^d \to \mathbb{R}_+$, we define the covariance matrix $\Cov{(p)}$
by 
\begin{equation*}
    [\Cov{(p)}]_{ij}:=\frac{\sum_{k \in \mathbb{Z}^d}k_{i}k_{j}p(k)}{\sum_k{p(k)}}-\frac{\sum_{k \in \mathbb{Z}^{d}}k_{i}p(k)\sum_{k \in \mathbb{Z}^{d}}k_{j}p(k)}{\Bigl(\sum_{k \in \mathbb{Z}^{d}}p(k)\Bigr)^2}.
\end{equation*}

\begin{definition} \label{almostisotropic}
A family $\{f_{\sigma}\}_{\sigma \in \mathbb{R}_+}$  of non-negative functions on $\mathbb{R}^d$ is {\it almost isotropic} if, as $\sigma \to \infty$,
\begin{align} \nonumber
    \Cov{(f_{\sigma})}_{i,j} &= \sigma^2 + O(\sigma) \quad \text{for } i=j, \\ \nonumber
    &= O(\sigma) \quad \text{for } i \neq j.
\end{align}

\end{definition}
\begin{remark}
It is straightforward to check that 
the family $\{f_{\sigma}\}_{\sigma \in \mathbb{R}_+}$ is {\it almost isotropic} if and only if
\begin{equation*}
||\Cov{(f_{\sigma})}-\sigma^2\mathrm{I}_d||_{\rm op} =O(\sigma),
\end{equation*}
where $||A||_{\rm op}$ is the operator norm. 
\end{remark}
\noindent
We are interested in log-concave densities $f$, for which $\det{\Cov{(f)}} \to \infty$. Thus, when we write that $f$ is almost isotropic, it is meant that $f$ represents a family of densities with the underlying parameter $\sigma := \det{\Cov{(f)}}^{\frac{1}{2d}}$ growing large.
When its not clear from the context, we write $\Cov_{\mathbb{R}}(f)$ and $\Cov{_{\mathbb{Z}}(f)}$ to distinguish between the continuous and discrete covariance matrix of $f$. 





\section{Proofs of main results} \label{relaxedsection}

Our first observation is that the class $\Cc$ defined in Definition \ref{familydef} is closed under self-convolution up to constants, as shown by the following proposition: 
\begin{proposition} \label{convpreserve}
Suppose 
$p \in \mathcal{C}(c_1(d),c_2(d),c_3(d)) $ for some constants $c_1(d),c_2(d) \geq 1, c_3(d)$ depending on the dimension only. Then the convolution of $p$ with itself $n$ times, 
$p_n := p *\stackrel{n}\ldots *p$, satisfies
$$p_n \in \mathcal{C}({n^{\frac{d}{2}}}c_1(d),n^{\frac{d}{2}}c_2^{\prime}(d,n),\sqrt{n}c_3(d))$$ for some possibly different constant $c_2^{\prime}(d,n)$ that depends on the dimension and $n$.

That is, denoting $\sigma:= \det{\Bigl(\Cov{(p)}}\Bigr)^{\frac{1}{2d}}$, we have
\begin{enumerate}
    \item \label{c1constant}
    $$\max_{k \in \mathbb{Z}^d}{p_n(k)} \leq \frac{c_1(d)}{\sigma^d}.$$ 

\item \label{c2constant} For all $k \in \mathbb{Z}^d$ with $\|k\|_{\infty} \geq \ctn \sigma$, 
$$
p_n(k) \leq \frac{\ctn}{\sigma^d}e^{-\frac{\|k\|_1}{\ctn\sigma}},
$$
where in fact we may take
$$
\ctn = \bigl(3\cdot4^{d+1}c_2(d)\bigr)^{2^n-1}.
$$
\item \label{c3constant} 
$$\sum_{i=1}^d{\sum_{k \in \mathbb{Z}^d}}{|p_n(k)-p_n(k-e_i)|}\leq \frac{c_3(d)}{\sigma}.$$
\end{enumerate}
\end{proposition}

\begin{proof}

First we note that, since $\Cov{(p_n)}=n\Cov{(p)}$, one has $\det{\Bigl(\Cov{(p_n)}}\Bigr)^{\frac{1}{2d}} = \sqrt{n}\det{\Bigl(\Cov{(p)}}\Bigr)^{\frac{1}{2d}},$ so $p_n \in \mathcal{C}(n^{\frac{d}{2}}c_1(d),n^{\frac{d}{2}}c_2^{\prime}(d,n),\sqrt{n}c_3(d))$ is indeed equivalent to the three statements. 

For \ref{c1constant} we have
$$
\max_k p_{n}(k)=\max \sum_{j \in \mathbb{Z}^d} p_{n-1}(j)p(k-j)\leq \max_{k} p(k)\sum_{j \in \Z^d}{p_{n-1}(j)}= \max_{k} p(k) \leq \frac{c_{1}(d)}{\sigma^d}. 
$$

To prove that property \ref{c2constant} is satisfied, we proceed by induction. Assume that $p,$ as well as $p_{n}$, satisfy condition \ref{A2} of Definition \ref{familydef} with the constant $\ctn$. Let $k\in \mathbb{Z}$ such that $\|k\|_{\infty} \geq \ctno \sigma.$ We have
\begin{align} \nonumber
p_{{n+1}}(k)&=\sum_{j} p(j)p_{n}(k-j) \\ \label{suma}
&=\sum_{\|j\|_{\infty}\leq c_2(d)\sigma} p(j)p_{n}(k-j)+\sum_{\substack{\|j\|_{\infty}>c_2(d)\sigma, \\ \|k-j\|_{\infty}>\ctn\sigma}}p(j)p_{n}(k-j) +\sum_{\substack{\|j\|_{\infty}>c_2(d)\sigma, \\ \|k-j\|_{\infty}\leq \ctn\sigma}}p(j)p_{n}(k-j).
\end{align}

For the first sum in \eqref{suma}, we note that for $\|j\|_{\infty}\leq c_2(d)\sigma$,
$\|k-j\|_{1}\geq \|k\|_1-\|j\|_1\geq \|k\|_{1} - \sqrt{d}\|j\|_{\infty} \geq \|k\|_{1} -\sqrt{d}c_2(d)\sigma \geq \|k\|_1 - \ctn \sigma$ and $\|k-j\|_{\infty} \geq \|k\|_{\infty} - \|j\|_{\infty} \geq \ctno \sigma - c_2(d)\sigma \geq \ctn \sigma$, by the elementary bound $\|x\|_{\infty} \leq \|x\|_{1} \leq \sqrt{d}\|x\|_{\infty}, x \in \R^{d}$. Therefore we can bound that sum as

\begin{equation} \label{Ibound}
    \sum_{\|j\|_{\infty}\leq c_2(d)\sigma}p(j)p_n(k-j)\leq \frac{\ctn}{\sigma^d}\sum_{\|j\|_{\infty}\leq c_2(d)\sigma}p(j)e^
{-\frac{\|k-j\|_1}{\ctn\sigma}} \leq \frac{\ctn}{\sigma^d}e^{-\frac{\|k\|_1}{\ctn\sigma}}.
\end{equation}

For the second sum in \eqref{suma}, using $c_2(d) \leq \ctn$, we have

\begin{align} \nonumber
&\sum_{\|j\|_{\infty}>c_{2}(d)\sigma, \|k-j\|_{\infty}>\ctn\sigma}p(j)p_n(k-j) \\ \nonumber
&\leq \frac{c_2(d)\ctn}{\sigma^{2d}}\sum_{\substack{\|j\|_{\infty}>c_2(d)\sigma, \\ \|k-j\|_{\infty}>\ctn\sigma}}{e^{\frac{-\|j\|_1-\|k-j\|_1}{\ctn\sigma}}} \\ \nonumber
&\leq \frac{c_2(d)\ctn}{\sigma^{2d}}\sum_{j \in \mathbb{Z}^d}{e^{\frac{-\|j\|_1-\mid\|k\|_1-\|j\|_1\mid}{\ctn\sigma}}}
\\ \label{IIdouble}
&\leq \frac{c_2(d)\ctn}{\sigma^{2d}}\Bigl[\sum_{0\leq m\leq \|k\|_1}{\sum_{\|j\|_1=m}{e^{\frac{-m+\|k\|_1+m}{\ctn\sigma}}}} + \sum_{ m\geq \|k\|_1}{\sum_{\|j\|_1=m}{e^{\frac{-2m+\|k\|_1}{\ctn\sigma}}}}\Bigr] .
\end{align}
The first double sum in \eqref{IIdouble} is 
\begin{align} \nonumber
   \sum_{0\leq m\leq \|k\|_1}{\sum_{\|j\|_1=m}{e^{\frac{-m+\|k\|_{1}+m}{\ctn\sigma}}}} &\leq e^{-\frac{\|k\|_1}{\ctn\sigma}}\big|\{j:\|j\|_1 \leq \|k\|_{1}\}\big| \\ \nonumber
&\leq e^{-\frac{\|k\|_1}{\ctn\sigma}}\big|\{j:\|j\|_{\infty} \leq \|k\|_{1}\}\big| \\ \label{IIfirstdoublesumlast}
&\leq e^{-\frac{\|k\|_1}{\ctn\sigma}}(2\|k\|_1+1)^d \leq 4^d \|k\|_1^d e^{-\frac{\|k\|_1}{\ctn\sigma}} \leq 4^d \sigma^{d} e^{-\frac{\|k\|_1}{2\ctn\sigma}},
\end{align}
where the last inequality holds at least as long as 
\begin{equation} \label{IIklargecondition}
\|k\|_1 \geq 2 \ctn d \sigma \Bigl(\log{(2\ctn d)}\Bigr)^2,
\end{equation}
since $x^d \leq e^{\frac{x}{2c}},$ for $x \geq 2cd\bigl(\log{(2cd)}\bigr)^2, d \geq 1,c \geq 2$.
On the other hand, the second double sum in \eqref{IIdouble} is 
\begin{align} \nonumber
\sum_{ m\geq \|k\|_1}{\sum_{\|j\|_1=m}{e^{\frac{-2m+\|k\|_1}{\ctn\sigma}}}} &\leq e^{\frac{\|k\|_1}{\ctn\sigma}}\sum_{m \geq \|k\|_1}{\big|\{j:\|j\|_{\infty}\leq m\}\bigr|e^{-\frac{2m}{\sigma \ctn}}} \\ 
\end{align}
\begin{align}
&\leq 4^de^{\frac{\|k\|_1}{\ctn\sigma}}\sum_{m \geq \|k\|_1}{m^de^{-\frac{2m}{\sigma \ctn}}} \\ \label{mdexpbound}
&\leq 4^d\sigma^de^{\frac{\|k\|_1}{\ctn\sigma}}\sum_{m \geq \|k\|_1}{e^{-\frac{3m}{2\sigma \ctn}}} \\ \label{IIseconddoublelast}
&=  4^d\sigma^de^{\frac{\|k\|_1}{\ctn\sigma}} \frac{e^{-\frac{3\|k\|_1}{2\sigma \ctn}}}{1-e^{-\frac{3\|k\|_1}{2\sigma \ctn}}} \leq 2\cdot4^d \sigma^de^{-\frac{\|k\|_1}{2\sigma \ctn}},
\end{align}
where \eqref{mdexpbound} holds at least as long as \eqref{IIklargecondition} is satisfied since $m \geq \|k\|_1$ and similarly for the last inequality.
Plugging the bounds \eqref{IIfirstdoublesumlast}, \eqref{IIseconddoublelast} in \eqref{IIdouble} we obtain 
\begin{align} \label{IIbound}
\sum_{\|j\|_{\infty}>c_{2}(d)\sigma, \|k-j\|_{\infty}>\ctn\sigma}p(j)p_n(k-j) &\leq 4^{d+1}\frac{c_2(d)\ctn^2}{\sigma^d}e^{-\frac{\|k\|_1}{2\sigma \ctn}},
\end{align}
provided that \eqref{IIklargecondition} holds. 

For the third sum in \eqref{suma}, we observe that $\|k-j\|_{\infty}\leq \ctn\sigma$ implies $\|j\|_{1}\geq \|k\|_1-\|k-j\|_1\geq \|k\|_1-\ctn\sqrt{d}\sigma$, and therefore
\begin{align} \nonumber
\sum_{\|j\|_{\infty}>c_{2}(d)\sigma, \|k-j\|_{\infty}\leq \ctn\sigma} p(j)p_n(k-j)&\leq \sum_{\|j\|_{\infty}>c_{2}(d)\sigma, \|k-j\|_{\infty}\leq c_{2}(d)\sigma} \frac{c_{2}(d)}{\sigma^d} e^{-\frac{\|j\|_1}{c_{2}(d)\sigma}}p_n(k-j) \\ \nonumber
&\leq 
\frac{\ctn}{\sigma^d}e^{\frac{-\|k\|_1}{\ctn\sigma}+\sqrt{d}}\sum_{j\in \mathbb{Z}}p(k-j) \\ \label{IIIbound}
&\leq \frac{e^{\sqrt{d}}\ctn}{\sigma^d}e^{\frac{-\|k\|_1}{\ctn\sigma}}.
\end{align}
Clearly, the worst bound out of \eqref{Ibound}, \eqref{IIbound} and \eqref{IIIbound} is \eqref{IIbound} and the dominating condition for $k$ is \eqref{IIklargecondition}.
By \eqref{suma}, we can therefore bound 
\begin{equation}
    p_{n+1}(k) \leq 3\cdot4^{d+1}\frac{c_2(d)\ctn^2}{\sigma^d}e^{-\frac{\|k\|_1}{2\sigma \ctn}} \leq \frac{\ctno}{\sigma^d}e^{-\frac{\|k\|_1}{\sigma \ctno}},
\end{equation}
with $\ctno = 3\cdot4^{d+1}c_2(d)\ctn^2 = \bigl(3\cdot4^{d+1}c_2(d)\bigr)^{2^n-1}$ by the inductive hypothesis. Note that this choice of $\ctno$ also ensures that \eqref{IIklargecondition} is satisfied and therefore the proof of \ref{c2constant}. is complete. 

For property \ref{c3constant}, let $X$ be a random vector with p.m.f. $p_X = p$. Then $S_n := \sum_{i=1}^n{X_i}$, where $X_i$ are i.i.d. copies of $X$, has p.m.f. $p_n$. We observe that
$$\sum_{i=1}^d{\sum_{k \in \mathbb{Z}^d}}{|p_n(k)-p_n(k-e_i)|}=\sum_{i=1}^{d}\|p_{S_n}-p_{S_n+e_{i}}\|_{\mathrm{TV}}.$$
For any probability measures $\mu_{1},\mu_{2},\nu_{1}$ and $\nu_{2}$,
the total variation distance satisfies the convolution inequality
\[\|\mu_{1}\ast \mu_{2}-\nu_{1}\ast \nu_{2}\|_{\mathrm{TV}}\leq \|\mu_{1}-\nu_{1}\|_{\mathrm{TV}}+\|\mu_{2}-\nu_{2}\|_{\mathrm{TV}}. \]  
This can be proved either directly or by the fact that total variation is an $f-$divergence and therefore satisfies a data processing inequality. 
Choosing $\mu_{1}=\nu_{1}=p_{S_{n-1}},$ $\mu_2 = p_X$ and $\nu_{2}=p_{X+e_{i}}$, we get
\[\|p_{S_n}-p_{S_n+e_{i}}\|_{\mathrm{TV}}\leq \|p_{X}-p_{X+e_{i}}\|_{\mathrm{TV}}  \]
Using the assumption on $X$, the last inequality yields 

\[\sum_{i=1}^{d} \|p_{S_n}-p_{S_n+e_{i}}\|_{\mathrm{TV}}\leq \sum_{i=1}^{d}\|p_{X}-p_{X+e_{i}}\|_{\mathrm{TV}}\leq \frac{c_{3}(d)}{\sigma}. \]
\end{proof}

\begin{lemma} \label{densityapproxlemma}
Let $X$ be a random vector with p.m.f. $p$ on $\mathbb{Z}^d$ satisfying conditions \ref{A2}, \ref{A3} of Definition~\ref{familydef}. 
Let $\Un =\sum_{i=1}^{n}U_{i}$ where $U_{i}$ are i.i.d. uniform random vectors on the cube $[0,1]^{d}$. 
Denote by $h_{n}^{(d)}(x)=\prod_{i=1}^{d}h_{n}(x_{i})$ the density of $U_n$ and by $f_{n}^{(d)}$ the density of $\Un+X$. Then,
\begin{enumerate}
    \item  \label{lem1-2} $\forall z\in [0,1]^{d},\forall n\geq 1$,
 $\sum_{j\in \{0,\ldots, n-1\}^{d}}h_{n}^{(d)}(j+z)=1$.  
    \item \label{improvementfofG} For $x=k+z$ where $z\in [0,1]^d$ and $k\in\Z^d$, \label{lem2-2} $|f_{n}^{(d)}(x)-p(k)|\leq \frac{2^{d-1}}{d} \frac{c_{3}}{\sigma}$ with $c_{3}$ the constant that appears in condition \ref{A3} of Definition \ref{familydef}.
    \item\label{gkexponential} For $x=k+z$ where $z\in [0,1]^d$ and $k\in\Z^d$, with $\|k\|_{\infty} \geq c_2 \sigma + n$,
    $$
         f_n^{(d)}(x) \leq \Bigl(n^d(e^{\frac{nd}{c_2\sigma}} + 1)+ 1\Bigr)\frac{c_2}{\sigma^d} e^{-\frac{\|k\|_1}{c_2\sigma}}.
    $$
\end{enumerate}

\end{lemma}
\begin{proof}
First, we prove 1.
Let us prove that the one-dimensional case holds i.e.
$\sum_{j=0}^{n-1}h_{n}(j+z)=1$, for any $z\in[0,1]$. For this, one does an induction on $n$. 
For $n=1$, since $S_{1}=U_{1}$, $h_{1}(z)=1_{[0,1]}$ so the base case is immediate. 
Let us suppose by induction that the claim holds for $n$. Noticing that the support of $h_n$ is $[0,n]$ and using the induction hypothesis, we get 
\begin{align*}
    \sum_{j=0}^{n}h_{n+1}(j+z)&=\sum_{j=0}^{n}\int_{0}^{1}h_{n}(j+z-u)du =\sum_{j=0}^{n}\left( \int_{0}^{z}h_{n}(j+z-u)du+\int_{z}^{1}h_{n}(j+z-u)du \right)\\
    &=z+\int_{0}^{z}h_{n}(n+z-u)du+\int_{z}^{1} \left(\sum_{k=0}^{n-1}h_{n}(k+1+z-u)\right)du=z+1-z=1 . 
\end{align*}
For the $d$-dimensional case, observe that since $h_{n}^{(d)}$ is a product, for all $z\in [0,1]^{d}$,
\[\sum_{j\in \{0,\ldots, n-1\}^{d}}h_{n}^{(d)}(j+z)=\sum_{j\in \{0,\ldots, n-1\}^{d}}\left(\prod_{i=1}^{d}h_{n}(z_{i}+j_{i})\right)=\prod_{i=1}^{d}\left(\sum_{j_i=0}^{n-1}h_{n}(z_{i}+j_{i})\right)=1,  \]
where, for the last equality, we have used the above one-dimensional argument.

Let us prove the second item. Using item \ref{lem1-2}, one has 
\begin{equation} \label{densitydef}
f_{n}^{d}(x)-p(k)=f_{n}^{(d)}(k+z)-p(k)=\sum_{i\in\Z^d} p(k-i)h_{n}^{(d)}(i+z)-p(k)=\sum_{i\in \mathbb{Z}^{d}}u(i)v(i),
\end{equation}
where $u(i)=p(k-i)-p(k)$, $v(i)=h_{n}^{(d)}(z+i)$. Note that the sum actually ranges over $i \in \{0,\ldots,n-1\}^d$, since $h_n^{(d)}(i+z) = 0$ outside this discrete cube. Also, for $i=(i_1,\dots,i_d)\in \mathbb{Z}^{d}$, denote
\[W(i):=\left(T(v)\right)_{i}=\sum_{j\geq i} v_{j},\]
where $j\geq i$ means that $j_{l}\geq i_{l}$ for all $1\leq l \leq d$. By a straightforward induction, 
\[v(i)=(T^{-1}(W))_{i}=\sum_{\epsilon\in \{-1,1\}^{d}}\left(\prod_{j=1}^{d}\left(-\epsilon_{j}\right)\right)W\left(i+\sum_{k=1}^{d}\frac{\epsilon_{k}+1}{2}e_{k}\right) .\]
Let us introduce the standard dot product
in $l^{2}(\mathbb{Z})$ as
$\langle z,h \rangle_{l^{2}(\mathbb{Z})}:=\sum_{i\in \mathbb{Z}^{d}}z(i)h(i) $ for any $z,h\in \mathbb{R}^{\mathbb{Z}^{d}}$. Then the sum
$\sum_{i\in \Z^{d}}u(i)v(i)$
can be rewritten as $\langle u,T^{-1}W \rangle_{l^{2}(\mathbb{Z})}$ with \[T^{-1}:=\sum_{\epsilon\in \{-1,1\}^{d}}\left(\prod_{j=1}^{d}(-\epsilon_{j})\right)S_{\epsilon} ,
\]
where $S_{\epsilon}$ is the left-shift operator, that is, $\left(S_{\epsilon}W\right)_{i}=W\left(i+\sum_{k=1}^{d}\frac{\epsilon_{k}+1}{2}e_{k}\right)$. It is well known that the adjoint $S_{\epsilon}^{\star}$ of $S_{\epsilon}$, 
is the right-shift operator or in another words $(S_{\epsilon}^{\star}(W))_{i}=W(i-\sum_{k=1}^{d}\frac{\epsilon_{k}+1}{2}e_{k})$. Thus
\begin{align*}
    \sum_{i\in \Z^{d}}u(i)v(i)&=\sum_{i\in \Z^{d}}\sum_{\epsilon\in \{-1,1\}^{d}}u_{i}\left(\prod_{j=1}^{d}\left(-\epsilon_{j}\right)\right)W\left(i+\sum_{k=1}^{d}\frac{\epsilon_{k}+1}{2}e_{k}\right)\\
    &=\langle u,T^{-1}(W)\rangle_{l^{2}(\mathbb{Z})}= \langle\left(T^{-1}\right)^{\star}u, W \rangle_{l^{2}(\mathbb{Z})}\\
    &=\sum_{i\in \Z^{d}}W(i)\sum_{\epsilon\in \{-1,1\}^{d}} \left(\prod_{j=1}^{d} (-\epsilon_{j})\right) u\left(i-\sum_{k=1}^{d}\frac{\epsilon_{k}+1}{2}e_{k}\right) 
\end{align*}
Thus, by the triangular inequality, summing up to the last coordinate which can be either $1$ or $-1$, we obtain 
\begin{align*}
\left|\sum_{i\in \mathbb{Z}^{d}} u(i)v(i)\right| &\leq \sum_{i\in \mathbb{Z}^{d}}
\sum_{\epsilon\in \{-1,1\}^{d-1}} \left|u\left(i-\sum_{k=1}^{d}\frac{\epsilon_{k}+1}{2}e_{k}\right)-u\left(i-\sum_{k=1}^{d-1}\frac{\epsilon_{k}+1}{2}e_{k}-e_{d}\right)\right|\\
&\le 2^{d-1}\sum_{i\in \mathbb{Z}^{d}}\left|u(i)-u(i-e_{d})\right| .
\end{align*}
Using the same argument for $j\in \{1,\ldots ,d\}$, instead of $j=d$ and summing over $j$ one gets 
\[\left|f_{n}(x)-p(k) \right|=\left|\sum_{i\in \mathbb{Z}^{d}}u(i)v(i) \right|\leq \frac{2^{d-1}}{d}\sum_{i\in \mathbb{Z}^{d}}\sum_{j=1}^{d}\left|p(i)-p(i+e_{j})\right|\leq
\frac{2^{d-1}}{d}\frac{c_{3}}{\sigma}, \]
where, in the last inequality, we have used that $X$ satisfies condition \ref{A3} of Definition \ref{familydef}.

Finally, for item \ref{gkexponential}, we have by \eqref{densitydef}, since $|h_n| \leq 1$,
\begin{align*}
    |f_n^{(d)}(x) - p(k)| &= \bigl|\sum_{i \in \{0,\ldots,n-1\}^d}{(p(k-i) - p(k))h_n(z+i)}\bigr| \\
    &\leq \sum_{i \in \{0,\ldots,n-1\}^d}{p(k-i)} + n^dp(k). 
\end{align*}
But, using hypothesis \ref{A2}, the assumption $\|k\|_{\infty} \geq c_2\sigma + n$ and the observations that $\|k-i\|_{\infty} \geq \|k\|_{\infty} - n \geq c_2\sigma$ and $\|k-i\|_1 \geq \|k\|_1 - nd,$ we have
\begin{align*}
    |f_n^{(d)}(x) - p(k)| &\leq n^d\frac{c_2}{\sigma^d}e^{-\frac{(\|k\|_1 - nd)}{c_2\sigma}} + n^d\frac{c_2}{\sigma^d}e^{-\frac{\|k\|_1}{c_2\sigma}}
\end{align*}
and the claimed bound follows since $f_n^{(d)}(x) \leq |f_n^{(d)}(x) - p(k)| + p(k)$. 
\end{proof}
\begin{remark}
The upper bound of Lemma \ref{densityapproxlemma}, item \ref{improvementfofG} depends on $d$, but not on $n$. Hence, for $d = 1$ our new method of proof improves the bound \cite[Eq. (21)]{gavalakis} by removing the dependence on $n$, i.e. the number of uniforms being added to the discrete random variable. 
\end{remark}

We are ready to prove Theorem \ref{diffentropyapprox}. Theorem \ref{EPI} will readily follow from this, together with the generalized EPI in $\mathbb{R}^d$.

\begin{proof}[Proof of Theorem \ref{diffentropyapprox}]

Let $F(x) = x\log{\frac{1}{x}}, x>0$ and note that $F(x)$ is increasing for $x \leq 1/e$. Denote $S_n = \sum_{i=1}^n{X_i},$  $\Un = \sum_{i=1}^n{U_i}$, and let $f_{S_n+\Un}$ be the density of $S_n+\Un$ on the $\mathbb{R}^d$ and $p_{S_n}$ the p.m.f. of $S_n$. Note that $p_{S_n}$ satisfies the three parts of Proposition \ref{convpreserve}. We have 
\begin{align} \nonumber
&h(X_1+\cdots+X_n+U_1+\cdots+U_n) \\ \label{sumbreak}
&= \sum_{k: \|k\|_{\infty}\leq \sigma^2}{\int_{k+[0,1)^d}{F(f_{S_n+\Un}(x)) dx}} + \sum_{k: \|k\|_{\infty} >  \sigma^2}{\int_{k + [0,1)^d}{F(f_{S_n+\Un}(x)) dx}}.
\end{align}
First, we bound the second term. From part \ref{gkexponential} of Lemma \ref{densityapproxlemma}, in that range of $k$ and for any $x\in k+[0,1)^d$, 
\begin{equation} \label{fsnunleqpk}
f_{S_n+\Un}(x) \leq \frac{C^{\prime}_{d,n}}{\sigma^d} 
    e^{-\frac{\|k\|_1}{\ctn \sigma}}
\end{equation}
for some constants $C^{\prime}_{d,n}, c^{\prime}(d,n)>0$ and $\sigma$ large enough. Therefore, for $\sigma$ large enough $f_{S_n+\Un} \leq 1/e$ so that $F$ is increasing. Moreover, using \eqref{fsnunleqpk} and that $\|x\|_{\infty} \leq \|x\|_{1} \leq \sqrt{d}\|x\|_{\infty}, x \in \mathbb{R}^d$, we have, for $\sigma$ large enough,
\begin{align} \label{ctsdiscretetails}
 \sum_{k: \|k\|_{\infty} > \sigma^2}{\int_{k + [0,1)^d}{F(f_{S_n+\Un}(x)) dx}}
 &\lesssim_{d,n}  \sum_{k: \|k\|_{\infty} >  \sigma^2}{\int_{k + [0,1)^d}{F\Bigl(\frac{C^{\prime}_{d,n}}{\sigma^d} 
    e^{-\frac{\|k\|_1}{\ctn \sigma}}\Bigr) dx}} \\ \nonumber
    &\lesssim_{d,n} \sum_{k: \|k\|_{\infty} > \sigma^2}{\frac{1}{\sigma^d}
    e^{-\frac{\|k\|_{\infty}}{\ctn\sigma}} \frac{\|k\|_{\infty}}{\sigma}} \\\label{tailstwoterms} &+  \sum_{k: \|k\|_{\infty} >  \sigma^2}{\frac{1}{\sigma^d} 
    e^{-\frac{\|k\|_{\infty}}{\ctn\sigma}}\log{\sigma}}.
    \end{align}
    Observing that the first term in \eqref{tailstwoterms} dominates and using a similar calculation as the one leading to \eqref{IIseconddoublelast}, we obtain 
    \begin{align} \nonumber
&\sum_{k: \|k\|_{\infty} > \sigma^2}{\int_{k + [0,1)^d}{F(f_{S_n+\Un}(x)) dx}}
  \\&\lesssim_{d,n} \frac{1}{\sigma^{d+1}} \sum_{k: \|k\|_{\infty} > \sigma^2}{
    e^{-\frac{\|k\|_{\infty}}{\ctn\sigma}} \|k\|_{\infty}} \\ \label{similarcalcmd}
    &\lesssim_{d,n} \frac{1}{\sigma^{d+1}} \sum_{m > \sigma^2}{m^{d+1}
    e^{-\frac{m}{\ctn\sigma}} }
        \\ \label{similarcalcmd2}
       &\lesssim_{d,n} \sum_{m > \sigma^2}{
    e^{-\frac{m}{2\ctn\sigma}} }  \lesssim_{d,n} \frac{e^{-\frac{\sigma}{2\ctn}}}{1-e^{-\frac{1}{2\ctn \sigma}}}
    \lesssim_{d,n} \sigma e^{-\frac{\sigma}{2\ctn}} = O(\sigma e^{-\Theta(\sigma)}).
\end{align}
We show that the first term in \eqref{sumbreak} is approximately $H(S_n)$.
To that end we apply the estimate~\eqref{cleanelemestimate} of Lemma \ref{elementaryestimatelemma} to the integrand with $\mu = \Theta_{d,n}( \frac{1}{\sigma}), D = \Theta_{d,n}(\sigma^{2d-1}), M = \sigma^{2d}, a = f_{S_n+\Un}(x)$ and $b=p_{S_n}(k).$ The assumption of the lemma is satisfied for an appropriate choice of the implied constants since by Lemma \ref{densityapproxlemma} and part \ref{c1constant} of Proposition \ref{convpreserve}, 
$$a,b \lesssim_{d,n} \max_{k\in \Z^d}{p_{S_n}(k)} + O_{d,n}\Bigl(\frac{1}{\sigma}\Bigr) \lesssim_{d,n} \frac{1}{\sigma} \simeq_{d,n} \frac{D}{M}.$$
Thus, since there are at most $\Theta(\sigma^{2d})$ elements in the set $\{k \in \mathbb{Z}^d: \|k\|_{\infty} \leq \sigma^2\}$, 
\begin{align} \nonumber
& \sum_{k: \|k\|_{\infty}\leq \sigma^2}{\Bigl|\int_{k + [0,1)^d}{F(f_{S_n+\Un}(x))dx}} - {F(p_{S_n}(k))} \Bigr| \\ \label{importantterms}
&\lesssim_{d,n} \sigma^{2d}\frac{\log{\sigma}}{\sigma^{2d+1}} + \log{\sigma}\sum_{k \in \mathbb{Z}^d}{\int_{k+ [0,1)^d}|f_{S_n+\Un}(x) - p_{S_n}(k)|dx} \\ \label{errorterm}
& \lesssim_{d,n} \frac{\log{\sigma}}{\sigma} + \log{\sigma}\sum_{k\in \mathbb{Z}^d}{\sup_{x \in k + [0,1)^d}|g_n(k, x)|} \\ \label{lastmucorr}
& \lesssim_{d,n} \frac{\log{\sigma}}{\sigma},
\end{align}
where $g_n(k,x)$ is given by Lemma \ref{densityapproxlemma} applied to the random variable $S_n$, which satisfies the assumption $p_{S_n} \in \mathcal{C}(c_1^{\prime}(d,n),c_2^{\prime}(d,n),c_3^{\prime}(d,n))$ for some constants that depend on $d,n$, and therefore 
$$\sum_k{\sup_{x \in k + [0,1)^d}|g_n(k, x)|} \lesssim \det{\Bigl(\Cov{(S_n)}\Bigr)}^{-\frac{1}{2d}} \simeq_{d,n} \frac{1}{\sigma}.$$ 
Therefore, by \eqref{lastmucorr},

\begin{align} \nonumber
\Bigl| H(S_n) - \sum_{k: \|k\|_{\infty}\leq \sigma^2}{\int_{k+ [0,1)^d}{F(f_{S_n+\Un}(x)) dx}}
 \Bigr|&\lesssim_{d,n}  \Bigl| H(S_n) -\sum_{k: \|k\|_{\infty}\leq \sigma^2}{F(p_{S_n}(k))}\Bigr|    + \frac{\log{\sigma}}{\sigma} \\  \label{lastHSnabs}
&\lesssim_{d,n} \sum_{k: \|k\|_{\infty}> \sigma^2}{F(p_{S_n}(k))} + \frac{\log{\sigma}}{\sigma} .
\end{align}
But, in view of \eqref{fsnunleqpk} and \eqref{ctsdiscretetails}, we can bound the discrete tails in the same way: 
\begin{equation} \label{discretetailsbound}
\sum_{k: \|k\|_{\infty} > \sigma^2}{F(p_{S_n}(k)) } \lesssim_{d,n} \frac{\sigma}{e^{\Theta_{d,n}(\sigma)}}.
\end{equation}
\noindent
%
%
Plugging the bound \eqref{discretetailsbound} into \eqref{lastHSnabs}, we get 
\begin{align} \nonumber
&\Bigl| H(S_n) - \sum_{k: \|k\|_{\infty}\leq \sigma^2}{\int_{k+ [0,1)^d}{F(f_{S_n+\Un}(x)) dx}} \Bigr| \\ \nonumber
&\leq  O_{d,n}\Bigl(\frac{\sigma}{e^{\Theta_{d,n}(\sigma)}}\Bigr) + O_{d,n}\Bigl(\frac{\log{\sigma}}{\sigma}\Bigr)  \\ \nonumber
&= O_{d,n}\Bigl(\frac{\log{\sigma}}{\sigma}\Bigr).
\end{align}
Finally, by \eqref{sumbreak} and the exponential bounds on the entropy tails \eqref{lastmucorr}, \eqref{discretetailsbound}, we obtain 
\begin{align}
    \bigl|h(S_n+U^{(n)}) - H(S_n)\bigr| &= O_{d,n}\Bigl(\frac{\log{\sigma}}{\sigma}\Bigr) + O_{d,n}\Bigl(\frac{\sigma}{e^{\Theta_{d,n}(\sigma)}}\Bigr) \\ \nonumber
    &= O_{d,n}\Bigl(\frac{\log{\sigma}}{\sigma}\Bigr),
\end{align}
completing the proof. 

\end{proof}

\begin{proof}[Proof of Theorem \ref{EPI}]
By the continuous EPI \eqref{cntsepid},
\begin{equation} \nonumber
    h\Bigl(\sum_{i=1}^{n+1}{X_i+U_i}\Bigr) \geq h\Bigl(\sum_{i=1}^{n}{X_i+U_i}\Bigr) + \frac{d}{2}\log{\Bigl(\frac{n+1}{n}\Bigr)}
\end{equation}
and by Theorem \ref{diffentropyapprox} applied to the differential entropies on both sides
\begin{equation} \nonumber
H\Bigl(\sum_{i=1}^{n+1}{X_i}\Bigr) + O_{d,n}\Bigl(\frac{\log{\sigma}}{\sigma}\Bigr) \geq H\Bigl(\sum_{i=1}^{n}{X_i}\Bigr) + O_{d,n}\Bigl(\frac{\log{\sigma}}{\sigma}\Bigr) +  \frac{d}{2}\log{\Bigl(\frac{n+1}{n}\Bigr)}.
\end{equation}
But by \eqref{gaussmax}, $O_{d,n}\Bigl(\frac{\log{\sigma}}{\sigma}\Bigr) = O_{d,n}\Bigl({H(X_1)}{e^{-\frac{1}{d}H(X_1)}}\Bigr)$
and the result follows. 
\end{proof}

\begin{remark}
The rate can be improved to superpolynomial by splitting the sum according to $\sigma^{1+\epsilon}$ and optimising over $\epsilon$.
\end{remark}

\begin{lemma}[\cite{gavalakis}] \label{elementaryestimatelemma}
Let $D, M \geq 1$ and, for $x > 0,$ consider $G(x) = F(x) -x\log{M}$, where $F(x) = -x\log{x}.$
Then, for $0 \leq a,b \leq \frac{D}{M}$ and any $0 < \mu < \frac{1}{e},$
we have the estimate
\begin{equation} \nonumber
|G(b) - G(a)| \leq  \frac{2\mu}{M}\log{\frac{1}{\mu}} + |b-a|\log{\frac{eD}{\mu}},
\end{equation}
whence 
\begin{equation} \label{cleanelemestimate}
|F(b) - F(a)| \leq \frac{2\mu}{M}\log{\frac{1}{\mu}} + |b-a|\log{\frac{eDM}{\mu}}.
\end{equation}

\end{lemma}
\begin{proof}
See \cite{gavalakis}.
\end{proof}

\section{Discrete isotropic log-concave random vectors} \label{logconcavesection}

This section is devoted to the proof (see end of Section \ref{cntsdiscretesection}) of the following result, which shows that discrete log-concave extensible random vectors with isotropic extension satisfy the properties of Definition \ref{familydef}. In Section \ref{pisotropicsection} we show that this can be generalized to discrete log-concave extensible random vectors with almost isotropic extension. 
\begin{theorem} \label{logconcavesatisfyTh}
Suppose $p$ is a centered log-concave p.m.f. on $\mathbb{Z}^d$ with isotropic extension and covariance matrix $\Cov{(p)} = \sigma^2\mathrm{I}_{d}$. Then there are constants $c_1(d),c_2(d),c_3(d)$ such that $p \in \mathcal{C}\left(c_1(d),c_2(d),c_3(d)\right)$, provided that $\sigma = \det\left(\Cov_{\mathbb{Z}^d}(p)\right)^{\frac{1}{2d}}$ is large enough depending on $d$, where $\mathcal{C}\left(c_1(d),c_2(d),c_3(d)\right)$ is defined in Definition \ref{familydef}. That is, if $\sigma$ is large enough depending on $d$
\begin{enumerate}
\item \label{logconcave1} $$\max_{k\in \Z^d}p(k) \leq \frac{c_1(d)}{\sigma^d}.
$$
\item \label{logconcave2} For every $k$ with $\|k\|_{\infty} \geq c_2(d)\sigma$
$$
p(k) \leq \frac{C_2(d)}{\sigma^d}e^{-\frac{\|k\|_1}{c_2(d)\sigma}}.
$$
\item \label{logconcave3} $$
\sum_{i=1}^d\sum_{k \in \mathbb{Z}^d}|p(k) - p(k-e_i)| \leq \frac{c_3(d)}{\sigma}.
$$
\end{enumerate}

\end{theorem}

\subsection{Ball's bodies} \label{ballbodiessec}
In what follows, we set $\omega_{d}$ to be the volume of the unit Euclidean ball denoted by $B_{2}^{d}$. For a given convex body $K\in \mathbb{R}^{d}$ with $0$ in its interior, its \textit{inradius} $r(K)$ is defined as the largest $r>0$ for which $rB_{2}^{d}\subset K$ and its \textit{circumradius} $R(K)$ is defined as the smallest $R>0$ such that $K\subset RB_{2}^{d}$. The support function of $K$ is defined as $h_{K}(x)=\max_{y\in K}\langle x,y \rangle$ and the radial function of $K$ is $\rho_K(x)=\inf\{t>0;\ tx\in K\}=1/\|x\|_K$, where $\|x\|_K$ denotes the gauge associated to $K$.

For most results in this section we will assume that $f:\mathbb{R}^{d}\rightarrow \mathbb{R}$ is a centered, isotropic, log-concave density, that is 
$
\int_{\mathbb{R}^{d}} f=1$,
$
\int_{\mathbb{R}^{d}} xf=0$ and
$\int_{\mathbb{R}^{d}}x^Txf=\sigma^{2}\mathrm{I}_d.
$
To this function $f$, we attach its Ball's bodies.
This important family of bodies was introduced by Ball \cite{Ball88}. We refer to the book \cite{Giannopoulos14} for the properties of these.
\begin{definition}
Let $f:\mathbb{R}^{d}\rightarrow [0,\infty)$ be an integrable, centered log-concave function. 
For any $p>0$, the set $K_{p}(f)$ is defined as follows
\begin{equation*}
    K_{p}(f):=\left\{x\in \mathbb{R}^{d}: \int_{0}^{\infty} pr^{p-1}f(rx)dr\geq f(0)\right\} .
\end{equation*}
Ball \cite{Ball88} established that the set $K_p(f)$ is a convex body. Moreover, its radial function is 
\begin{equation*}
    \rho_{K_{p}(f)}(x)=\Bigl(\frac{1}{f(0)}\int_{0}^{\infty}pr^{p-1}f(rx)dr\Bigr)^{\frac{1}{p}} \hspace{0.2cm} \text{for} \hspace{0.2cm} x\neq 0.
\end{equation*}
From integration in polar coordinates (see \cite[Proposition 2.5.3]{Giannopoulos14}), it follows that $K_{d+1}(f)$ is centered and, for any $p\ge0$ and $u\in \mathbb{S}^{d-1}$,
\begin{equation}\label{int-K-p-f}
\int_{K_{d+p}(f)}|\langle x,u\rangle|^pdx=\frac{1}{f(0)}\int_{\R^d}|\langle x,u\rangle|^pf(x)dx.
\end{equation}
Finally, the following inclusion relations between $K_{p}(f)$ and $K_{q}(f)$ hold \cite[Proposition 2.5.7]{Giannopoulos14}, for $p<q$,
\[
\frac{\Gamma(p+1)^{\frac{1}{p}}}{\Gamma(q+1)^{\frac{1}{q}}}K_{q}(f)\subset K_{p}(f)\subset e^{\frac{d}{p}-\frac{d}{q}} K_{q}(f).
\]
Applying these inclusions first for $p=d$ and $q=d+1$ then for $p=d+1$ and $q=d+2$ and using classical inequalities on Gamma functions, it follows that there exist universal constants  $0<c_1<c_2$ such that for any dimension $d\ge1$, one has 
\begin{equation}\label{inclusion-d-d+1}
c_1 K_{d+1}(f)\subset K_{d}(f)\subset c_2 K_{d+1}(f) \quad \hbox{and}\quad c_1 K_{d+2}(f)\subset K_{d+1}(f)\subset c_2 K_{d+2}(f).
\end{equation}
\end{definition}

We shall apply to $K_{d+1}(f)$ the following theorem due to Kannan, Lov{\'a}sz and Simonovits \cite[Theorem 4.1]{KLS} to which we give a new and simple proof. 

\begin{theorem}\label{KLS-h_K}
Let $K$ be a centered convex body in $\R^d$ and $u\in \mathbb{S}^{d-1}$. Then
\begin{equation}\label{h_K-x-u-square}
   \frac{h_K(u)^2}{d(d+2)}\le\frac{1}{|K|}\int_K\langle x,u\rangle^2dx=\langle\Cov(K)u,u\rangle\le\frac{d}{d+2}h_K(u)^2.
\end{equation}
\end{theorem}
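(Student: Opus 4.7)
The plan is to reduce the two-sided inequality to a one-dimensional statement about the $u$-marginal of the uniform distribution on $K$, and then to identify the extremal densities via Brunn's theorem and the centering constraint.

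First I would set $M = h_K(u)$, $M' = h_K(-u)$, and introduce the $(d-1)$-dimensional slice function
\[
g(t) = \bigl|K \cap \{x \in \R^d : \langle x,u\rangle = t\}\bigr|_{d-1},\qquad t\in[-M',M].
\]
By Brunn's theorem (a direct consequence of the Brunn--Minkowski inequality), $g^{1/(d-1)}$ is concave on $[-M',M]$, so we can write $g(t) = \phi(t)^{d-1}$ for some non-negative concave function $\phi$. Since $K$ is centered, $\int_{-M'}^M t\,g(t)\,dt = 0$. Thus the statement reduces to showing that for every such $g$ with $\int t\,g(t)\,dt=0$ one has
\[
\frac{M^2}{d(d+2)} \;\le\; \frac{\int_{-M'}^{M} t^2 g(t)\,dt}{\int_{-M'}^M g(t)\,dt} \;\le\; \frac{d\,M^2}{d+2}.
\]
Note that the centering of $K$ already forces $M' \le dM$ by Grünbaum's inequality, which we will need for the lower bound.

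Second, I would identify the two extremal densities by a direct calculation. Look for a density of the form $g_\ast(t)=c(t-t_0)_+^{d-1}$ supported on an interval with one endpoint equal to $M$ or to $-M'$. The condition $\int t\,g_\ast=0$ fixes the other endpoint uniquely: one finds the maximizer $g_\ast(t)=c(t+dM)^{d-1}\mathbf{1}_{[-dM,M]}$, for which an elementary computation (using $\int_0^r s^{d-1}ds = r^d/d$ and expanding $(s-A)^2$) yields $\int t^2 g_\ast/\int g_\ast = dM^2/(d+2)$; and the minimizer $g_\ast(t)=c(M-t)^{d-1}\mathbf{1}_{[-M/d,M]}$, for which the same computation yields $\int t^2 g_\ast/\int g_\ast = M^2/(d(d+2))$. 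These match the two bounds exactly.

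Third, the real content is the comparison step: among all $g$ with $g^{1/(d-1)}$ concave, support in $[-M',M]$ with $M'\le dM$, and $\int t g=0$, the second moment is extremized by these power-affine profiles. My plan is to use a compression/rearrangement argument in the spirit of the KLS localization lemma: replace $g$ by the unique power-affine density $c(t-t_0)_+^{d-1}$ with the same values of $\int g$ and $\int t g$ and the same $\sup$ (respectively $\inf$) of support, and show that the second moment only increases (respectively decreases) under this replacement. Concavity of $\phi$ ensures that $g$ and $g_\ast$ intersect exactly once in each of the two ``sides'' of the support, from which a standard sign-chasing of $(t^2 - q(t))(g(t)-g_\ast(t))$ for a suitable linear $q$ gives the inequality.

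The main obstacle is executing the compression argument cleanly. The classical route is the localization lemma of Lovász--Simonovits, which mechanically reduces any such integral inequality to checking it on the power-affine ``needles'' we have already computed; this is the safest path. A shorter ``new and simple'' alternative would be to argue directly from the concavity of $\phi$: the function $t\mapsto t^2 - \alpha t - \beta$ with $\alpha,\beta$ chosen to make $\int(t^2-\alpha t-\beta)g_\ast\,dt = 0$ changes sign at exactly the same two points where $g-g_\ast$ changes sign, and a pointwise sign analysis combined with the identity $\int(\alpha t+\beta)(g-g_\ast)dt=0$ (which holds because the first two moments are matched) then delivers $\int t^2(g-g_\ast)dt \le 0$ in the maximizer case and $\ge 0$ in the minimizer case.
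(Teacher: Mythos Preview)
Your approach is essentially the paper's own proof: reduce to the one-dimensional marginal $g(t)=|K\cap\{ \langle\,\cdot\,,u\rangle=t\}|$ via Brunn, compare with the two power-affine profiles supported on $[-dM,M]$ and $[-M/d,M]$ that share the same zeroth and first moments, and conclude by the two-sign-change argument. Your ``shorter alternative'' is exactly what the paper does---once concavity of $g^{1/(d-1)}$ versus affinity of $g_\ast^{1/(d-1)}$ forces exactly two sign changes $t_1<t_2$ of $v=g-g_\ast$, one has $(t-t_1)(t-t_2)v(t)\ge0$ pointwise, and integrating together with $\int v=\int t\,v=0$ gives the desired inequality for $\int t^2 v$; just note that your recipe for $\alpha,\beta$ via $\int(t^2-\alpha t-\beta)g_\ast=0$ is underdetermined---simply take $\alpha=t_1+t_2$, $\beta=-t_1t_2$.
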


\begin{proof}
Let $f(t)=|\{x\in K; \langle x,u\rangle=t\}$. Denote by $[-b,a]$ its support. Then one has $a=h_K(u)$, 
\[
\int_\R f(t)dt=|K|, \quad \int_\R tf(t)dt=\int_K\langle x,u\rangle dx\quad \hbox{and}\quad \int_\R t^2f(t)dt=\int_K\langle x,u\rangle^2 dx.
\]
We define $g:\R\to\R_+$ by 
\[
g(t)=\alpha\left(1+\frac{t}{da}\right)^{d-1}\mathds{1}_{[-da,a]}(t),\quad\hbox{where}\ \alpha=\frac{1}{a\left(1+\frac{1}{d}\right)^d}\int_\R f(t)dt.
\]
And let $h:\R\to\R_+$ be defined by $h(t)=dh(-dt)$. Then, it is not difficult to see that 
\begin{equation}\label{f-g-h}
\int_\R g(t)dt= \int_\R h(t)dt=\int_\R f(t)dt \quad \hbox{and}\quad 
\int_\R tg(t)dt= \int_\R th(t)dt=\int_\R tf(t)dt=0.
\end{equation}
We shall prove that 
\[
\int_\R t^2g(t)dt\le \int_\R t^2f(t)dt \le \int_\R t^2h(t)dt,
\]
from which the inequality \eqref{h_K-x-u-square} follows immediately by a simple calculation.
If $f\neq g$, it follows from \eqref{f-g-h} that there exist at least two points $t_1(g)<t_2(g)$ such that $g-f$ changes sign at these points and, in the same way, if $f\neq h$ there exists at least two points $t_1(h)<t_2(h)$ such that $f-h$ changes sign at these points.
Since the function $f^{1/(d-1)}$ is concave on its support $[-b,a]$ and the functions $g^{1/(d-1)}$ and $h^{1/(d-1)}$ are affine on their respective supports $[-da,a]$ and $[-\frac{a}{d},a]$, it follows that the functions $g-f$ and $f-h$ have exactly two sign changes and that they are non positive on $[t_1,t_2]$ and non negative outside $[t_1,t_2]$. Let us define $v$ to be either $g-f$ or $f-h$. Then, for any $t\in\R$, one has $(t-t_1)(t-t_2)v(t)\ge0$.
Integrating this inequality, we deduce that $\int t^2v(t)dt\ge0$, which is the result.
\end{proof}



We shall deduce from the preceding theorem the following important technical lemma. We note that Lemma \ref{KB} (and its consequences) is the only part, where the isotropicity assumption is used. 

\begin{lemma}\label{KB}
Let $d\ge1$ be an integer. There exist two constants $0<C_d'<C_d$ such that for any $f:\mathbb{R}^{d}\rightarrow \mathbb{R_+}$  centered, isotropic, log-concave density and for every $\theta\in \mathbb{S}^{d-1}$,
\begin{equation}\label{r-d-1-f-bounds}
   C_{d}^{\prime}\leq \left(\int_{0}^{\infty} dr^{d-1}f(r\theta)dr\right)^\frac{1}{d}\leq C_{d} ,
\end{equation}
where $C_{d}$ and $C_{d}^{\prime}$ are constants depending only on the dimension $d$. In fact, we may take 
\begin{equation}\label{constant1}
 C_{d}^{\prime}=\frac{c_1^{d+2}}{\sqrt{2\pi} e^\frac{3}{2}}    \quad\hbox{and}\quad   C_{d}=(d+1)c_2^{d+2}L_d,
\end{equation}
 where $c_1$ and $c_2$ are the absolute constants appearing in \eqref{inclusion-d-d+1}.
\end{lemma}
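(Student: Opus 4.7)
My plan is to identify the integral as $f(0)\rho_{K_d(f)}(\theta)^d$ via the definition of the radial function of Ball's body, and then to pin down $f(0)^{1/d}$ and $\rho_{K_d(f)}(\theta)$ separately. For the first factor, the identity $|K_d(f)|=1/f(0)$ (which is \eqref{int-K-p-f} at $p=0$), the formula $L_f=(\max f)^{1/d}\sigma$ for isotropic $f$, the universal bounds $\frac{1}{\sqrt{2\pi e}}\le L_f\le L_d$, and the classical inequality $f(0)\ge e^{-d}\max f$ of Fradelizi for centered log-concave densities combine to give $\frac{1}{\sigma\sqrt{2\pi}\,e^{3/2}}\le f(0)^{1/d}\le \frac{L_d}{\sigma}$. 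The task thus reduces to showing $\rho_{K_d(f)}(\theta)\simeq_d\sigma$ with the constants of \eqref{constant1}.

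For the upper bound, the inclusion $K_d(f)\subset c_2K_{d+1}(f)$ from \eqref{inclusion-d-d+1} together with the trivial $\rho\le h$ gives $\rho_{K_d(f)}(\theta)\le c_2\,h_{K_{d+1}(f)}(\theta)$. The upper half of Theorem \ref{KLS-h_K} applied to the centered body $K_{d+1}(f)$ then reduces $h_{K_{d+1}(f)}(\theta)^2$ to the normalized second moment over $K_{d+1}(f)$. This moment is controlled from above via $K_{d+1}(f)\subset c_2K_{d+2}(f)$ and the exact identity $\int_{K_{d+2}(f)}\langle x,\theta\rangle^2dx=\sigma^2/f(0)$ from \eqref{int-K-p-f}, while $|K_{d+1}(f)|$ is bounded below via $K_d(f)\subset c_2K_{d+1}(f)$ and $|K_d(f)|=1/f(0)$. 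Bookkeeping the constants yields $h_{K_{d+1}(f)}(\theta)\le\sqrt{d(d+2)}\,c_2^{d+1}\sigma\le(d+1)c_2^{d+1}\sigma$, and combining with $f(0)^{1/d}\le L_d/\sigma$ gives the stated $C_d=(d+1)c_2^{d+2}L_d$.

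The main obstacle is the lower bound: running the analogous chain with the lower half of Theorem \ref{KLS-h_K} and the $c_1$-inclusions only yields $h_{K_{d+1}(f)}(u)\gtrsim_d\sigma$ uniformly in $u$, and passing from the support function to the radial function of the non-symmetric body $K_{d+1}(f)$ requires extra input. The key observation is that $K_{d+1}(f)$ is centered (noted immediately after its definition), so the Gr\"unbaum-type inclusion $-K\subset dK$ valid for any centered convex body applies: it implies $K/d\subset K\cap(-K)$, whence the symmetric body $K\cap(-K)$ has inradius at least $(\min_uh_K(u))/d$, and therefore $r(K_{d+1}(f))\gtrsim_d\sigma$. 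Since $\rho_{K_{d+1}(f)}(\theta)\ge r(K_{d+1}(f))$ and $c_1K_{d+1}(f)\subset K_d(f)$ by \eqref{inclusion-d-d+1}, one obtains $\rho_{K_d(f)}(\theta)\gtrsim_d c_1^{d+2}\sigma$, which combined with the lower bound on $f(0)^{1/d}$ produces the stated $C'_d=c_1^{d+2}/(\sqrt{2\pi}\,e^{3/2})$ up to absorbing the inverse dimension factor from Gr\"unbaum into the overall $d$-dependence.
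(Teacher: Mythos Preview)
Your approach is essentially the same as the paper's: reduce to bounding $f(0)^{1/d}\rho_{K_d(f)}(\theta)$, control $f(0)^{1/d}\sigma$ via $L_f$, and sandwich $\rho_{K_d(f)}(\theta)/\sigma$ using Theorem~\ref{KLS-h_K} applied to $K_{d+1}(f)$, the moment identity \eqref{int-K-p-f}, and the inclusions \eqref{inclusion-d-d+1}. Your upper bound matches the paper's exactly.

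The one place you diverge is the lower bound, and there you make an unnecessary detour that costs you the stated constant. You write that ``passing from the support function to the radial function of the non-symmetric body $K_{d+1}(f)$ requires extra input''---but it does not. For any convex body $K$ with $0$ in its interior one has $K=\bigcap_{u\in\mathbb{S}^{d-1}}\{x:\langle x,u\rangle\le h_K(u)\}$; hence if $h_K(u)\ge r$ for every $u$, then for $|x|\le r$ we get $\langle x,u\rangle\le r\le h_K(u)$ for all $u$, so $rB_2^d\subset K$. In other words, a uniform lower bound on the support function \emph{is} a lower bound on the inradius (centered at $0$), symmetric or not. The paper uses exactly this: from $h_{K_{d+1}(f)}(u)\ge\sqrt{(d+2)/d}\,c_1^{d+1}\sigma$ and $c_1K_{d+1}(f)\subset K_d(f)$ it gets $\sqrt{(d+2)/d}\,c_1^{d+2}\sigma B_2^d\subset K_d(f)$, and combining with $f(0)^{1/d}\ge 1/(\sigma\sqrt{2\pi}\,e^{3/2})$ yields the exact $C_d'=c_1^{d+2}/(\sqrt{2\pi}\,e^{3/2})$. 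Your Gr\"unbaum route is valid but introduces a spurious $1/d$, so as written your argument does not produce the constant in \eqref{constant1}; once you replace Gr\"unbaum by the elementary observation above, it does.
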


\begin{proof}
The function $f$ being isotropic, we have $\Cov(f)=\sigma^2I_d$, for some $\sigma>0$ and $\int f=1$, thus $L_f=\max(f)^\frac{1}{d}\sigma$.
Note that proving \eqref{r-d-1-f-bounds} is equivalent to proving that 
\[
C_d'B_2^d\subset f(0)^\frac{1}{d}K_{d}(f)\subset C_dB_{2}^{d}.
\]
Applying Theorem \ref{KLS-h_K} to the centered body $K_{d+1}(f)$, we get that for any $u\in \mathbb{S}^{d-1}$,
\begin{equation}\label{h_K-d-x-u-square}
   \frac{h_{K_{d+1}(f)}(u)^2}{d(d+2)}\le\frac{1}{|K_{d+1}(f)|}\int_{K_{d+1}(f)}\langle x,u\rangle^2dx\le\frac{d}{d+2}h_{K_{d+1}(f)}(u)^2.
\end{equation}
Then, we use \eqref{int-K-p-f} for $p=2$ to get that 
\[
\int_{K_{d+2}(f)}\langle x,u\rangle^2dx=\frac{1}{f(0)}\int_{\R^d}\langle x,u\rangle^2f(x)dx=\frac{\sigma^2}{f(0)}.
\]
This gives 
\[
c_1^{d+2}\frac{\sigma^2}{f(0)}\le\int_{K_{d+1}(f)}\langle x,u\rangle^2dx\le c_2^{d+2}\frac{\sigma^2}{f(0)}.
\]
Using again the inclusion relations \eqref{inclusion-d-d+1}, we get upper and lower bound of $|K_{d+1}(f)|$  by $|K_{d}(f)|=1/f(0)$, by \eqref{int-K-p-f} for $p=0$. Thus, altogether we have 
\[
c_1^{2d+2}\sigma^2\le\frac{1}{|K_{d+1}(f)|}\int_{K_{d+1}(f)}\langle x,u\rangle^2dx\le c_2^{2d+2}\sigma^2.
\]
Using these inequalities, \eqref{h_K-d-x-u-square} and the inclusion relations \eqref{inclusion-d-d+1}, we get 
\[
\sqrt{\frac{d+2}{d}}c_1^{d+2} \sigma B_2^d\subset  K_d(f)\subset\sqrt{d(d+2)}c_2^{d+2} \sigma B_2^d.
\]
From \cite[Theorem 4]{fradelizi}, we have $f(0)\ge e^{-d}\max(f)$ hence
\[
\frac{L_f}{e}\le f(0)^\frac{1}{d}\sigma=\left(\frac{f(0)}{\max(f)}\right)^\frac{1}{d}L_f\le L_d.
\]
Using that 
$L_f\ge  1/\sqrt{2\pi e}$, we conclude. 
\end{proof}

\subsection{A concentration lemma} \label{concentrationlemmasec}
The following concentration lemma will be required to establish the concentration property 2.
\begin{lemma}[Concentration Lemma]\label{LC}
Let $c_{d}:=3^{\frac{1}{d}}C_d$\label{constanti}, where $C_d$ is the constant from \eqref{constant1}. Then, for every log-concave, isotropic, centered density function $f$ and for every $x \in \mathbb{R}^d$ such that $\|x\|_2\geq c_{d}/f(0)^{\frac{1}{d}}$,  
\begin{equation} \label{tailboundr0}
    f(x) \leq f(0) 
    2^{-\|x\|_2\frac{f(0)^{\frac{1}{d}}}{c_d}}.
\end{equation}
\end{lemma}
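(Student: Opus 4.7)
The plan is to reduce to a one-dimensional log-concavity argument along each ray from the origin, using the integral upper bound of Lemma~\ref{KB} as the only global input and then exploiting the concavity of $\log f$ restricted to the ray.

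Fix $\theta \in \mathbb{S}^{d-1}$ and set $g(r) := f(r\theta)$ for $r\ge 0$, which is log-concave on $\mathbb{R}_+$. Write $R_0 := c_d / f(0)^{1/d} = 3^{1/d}C_d / f(0)^{1/d}$. The first step is to show
\begin{equation*}
g(R_0)\le \frac{f(0)}{2}.
\end{equation*}
Suppose, for contradiction, that $g(R_0) > f(0)/2$. By log-concavity $\log g$ lies above its chord on $[0,R_0]$, so for every $r\in[0,R_0]$,
\begin{equation*}
g(r)\ge g(0)^{1-r/R_0} g(R_0)^{r/R_0} > f(0)\cdot 2^{-r/R_0}.
\end{equation*}
Integrating against $d r^{d-1}$ on $[0,R_0]$ and changing variable $u=r/R_0$ gives
\begin{equation*}
\int_0^{R_0} d r^{d-1} g(r)\,dr \;>\; f(0)\,R_0^d \int_0^1 d u^{d-1} 2^{-u}\,du \;\ge\; \frac{f(0)R_0^d}{2} \;=\; \frac{c_d^d}{2} \;=\; \frac{3 C_d^d}{2},
\end{equation*}
since $2^{-u}\ge 1/2$ on $[0,1]$ and $\int_0^1 d u^{d-1}\,du=1$. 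But the upper bound in Lemma~\ref{KB} says this integral is at most $C_d^d$, contradicting $C_d^d < 3C_d^d/2$. Hence $g(R_0)\le f(0)/2$.

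For the second step, introduce $\phi(r):=\log(g(r)/f(0))$, which is concave on $\mathbb{R}_+$ with $\phi(0)=0$ and $\phi(R_0)\le -\log 2$. A standard consequence of concavity together with $\phi(0)=0$ is that the secant slopes $\phi(r)/r = (\phi(r)-\phi(0))/(r-0)$ are non-increasing in $r>0$. Therefore, for every $r\ge R_0$,
\begin{equation*}
\frac{\phi(r)}{r}\le \frac{\phi(R_0)}{R_0}\le -\frac{\log 2}{R_0},
\end{equation*}
which, after exponentiation, reads $g(r)\le f(0)\cdot 2^{-r/R_0}$. Writing $x=r\theta$ with $r=\|x\|_2$ and substituting $R_0=c_d/f(0)^{1/d}$ produces exactly the stated bound \eqref{tailboundr0}.

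The only real obstacle is calibrating the constant in front of $C_d/f(0)^{1/d}$ so that the lower bound obtained from integrating the chord actually beats $C_d^d$; the choice $c_d = 3^{1/d}C_d$ provides the needed slack $3/2 > 1$ in the contradiction above. Everything else is a one-line consequence of log-concavity (chord inequality) and of the elementary fact that concave functions vanishing at $0$ have non-increasing secant slopes from the origin.
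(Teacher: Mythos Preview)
Your proof is correct and follows the same overall architecture as the paper's: restrict to a ray, use the upper bound of Lemma~\ref{KB} to locate a radius at which $g$ has dropped to at most $f(0)/2$, and then extrapolate the exponential decay by log-concavity (monotonicity of secant slopes from the origin). The only genuine difference is in how the halving point is obtained: the paper first bounds the location $r_{\max}(\theta)$ of the radial maximum and then chooses an explicit $r_0(\theta)\le c_d/f(0)^{1/d}$ via the integral on $[r_{\max},r_0]$, whereas you argue directly that $g(c_d/f(0)^{1/d})\le f(0)/2$ by a contradiction based on the chord lower bound and the integral. Your variant is slightly cleaner in that it avoids tracking $r_{\max}$ altogether, while the paper's version is more constructive; both rely on exactly the same input (the upper bound in Lemma~\ref{KB}) and yield the identical constant $c_d=3^{1/d}C_d$.
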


\begin{proof}
Let $x_{\max}\in \mathbb{R}^{d}$ be a value where the maximum of $f$ is attained and for every $\theta\in \mathbb{S}^{d-1}$, let  $r_{\max}(\theta)\in \mathbb{R}^{d}$ be a value where the maximum of $r\mapsto f(r\theta)$ is attained. Since $f$ is log-concave, for every $\theta \in \mathbb{S}^{d-1}$, the function $r\mapsto f(r \theta)$ is non-increasing for $r \geq r_{\max}(\theta)$. Therefore, for every $r_0(\theta)\ge r_{\max}(\theta)$,
\begin{equation} \label{r0first}
\int_0^{\infty}dr^{d-1}f(r\theta)dr \geq \int_{r_{\max}(\theta)}^{r_{0}(\theta)}dr^{d-1}f(r\theta)dr \geq f(r_0(\theta))(r_0(\theta)^d-r_{\max}(\theta)^{d}).
\end{equation}
We choose $r_{0} = r_{0}(\theta)$ as 
\begin{equation} \label{r0estimate}
r_0(\theta)= \Bigl(2d\int_0^{\infty}{\frac{r^{d-1}f(r\theta)}{f(0)}dr}+r_{\max}^{d}\Bigr)^{\frac{1}{d}} \hspace{0.1cm},
\end{equation}
so that, 
by \eqref{r0first}, one has
$f(r_0(\theta)) \leq \frac{f(0)}{2}$.
Since $r\mapsto f(r \theta)$ is non-decreasing on $[0,r_{\max}(\theta)]$, one has
\begin{align*}
    \int_{0}^{+\infty} dr^{d-1}f(r\theta)dr\geq \int_{0}^{r_{\max}(\theta)}dr^{d-1}f(0)dr 
    =f(0)r_{\max}(\theta)^{d}, 
\end{align*}
and therefore, by Lemma \ref{KB},
\begin{equation*}
    r_{\max}(\theta)\leq \frac{\big(d\int_{0}^{+\infty}r^{d-1}f(r\theta)dr\big)^{\frac{1}{d}}}{f(0)^{\frac{1}{d}}} 
    \leq \frac{C_{d}}{f(0)^{\frac{1}{d}}}, 
\end{equation*}
where $C_{d}$ is given by \eqref{constant1}.
Thus, by \eqref{r0estimate} and the definition of $c_d$,
\begin{equation*}
    r_{0}(\theta)\leq \left(\frac{3}{f(0)}\right)^{\frac{1}{d}}C_d=\frac{c_d}{f(0)^{\frac{1}{d}}}.
\end{equation*}
Since for every $r \geq r_0(\theta)$, one has $r_{0}(\theta)=\frac{r_{0}(\theta)}{r}\cdot r+(1-\frac{r_{0}(\theta)}{r})\cdot 0$, by
log-concavity, we deduce
\begin{align} \nonumber
f(r) \leq f(0)\left(\frac{f(r_0(\theta))}{f(0)}\right)^{\frac{r}{r_0(\theta)}}
\leq f(0)2^{-{\frac{r}{r_0(\theta)}}}\le f(0)2^{-{\frac{rf(0)^{\frac{1}{d}}}{c_d}}} .
\end{align}
Hence, if $\|x\|_2\geq c_{d}/f(0)^{\frac{1}{d}}$, then 
\begin{equation*}
    f(x) \leq f(0) 2^{-\|x\|_2\frac{f(0)^{\frac{1}{d}}}{c_d}}.
\end{equation*}

\end{proof}

\subsection{Sum of maxima of isotropic log-concave functions} \label{sumofmaximasec} 

The following lemma bounds the sum of the maxima of a log-concave density using our previous concentration result.

\begin{lemma} \label{sumofmaximalemma}
Let $f$ be a centered, isotropic, log-concave density on $\mathbb{R}^d$ with covariance $\sigma^2 \mathrm{I}_{\mathrm{d}}$.  Let $0\le i\le 2$ and $0\le j\le d-1$. Then, as $\sigma \to \infty$,
\begin{equation} \label{sumofmaxima}
    \sum_{l\in \mathbb{Z}^{d-j}}|l_1|^i\max_{k\in \mathbb{Z}^j}f(k_1,\dots,k_j,l_1,\dots,l_{d-j})={O_d}\Bigl(\frac{1}{\sigma^{j-i}}\Bigr).
\end{equation}
\end{lemma}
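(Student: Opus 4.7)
The plan is to split the summation domain $\mathbb{Z}^{d-j}$ into a \emph{core} ball of radius $R_d=\Theta_d(\sigma)$ and its \emph{tail} complement, and to bound the quantity $M(l) := \max_{k\in\mathbb{Z}^j} f(k_1,\dots,k_j,l_1,\dots,l_{d-j})$ differently on each region. On the core I will use the trivial bound $M(l)\le \max f$, while on the tail I will extract genuine exponential decay in $\|l\|_2/\sigma$ from the Concentration Lemma \ref{LC}. Weighted by $|l_1|^i$, both pieces will turn out to have order $O_d(\sigma^{i-j})$.

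First I would record the sizes of $\max f$ and $f(0)$. Because $f$ is isotropic with $\Cov(f)=\sigma^2 I_d$ and has integral $1$, one has $L_f=(\max f)^{1/d}\sigma$ with $L_f$ pinched between $1/\sqrt{2\pi e}$ and $L_d$, so $\max f\simeq_d \sigma^{-d}$. Combining this with the inequality $f(0)\ge e^{-d}\max f$ from \cite[Theorem 4]{fradelizi} (already used in the proof of Lemma \ref{KB}) yields $f(0)\simeq_d \sigma^{-d}$, and in particular $c_d/f(0)^{1/d}\simeq_d \sigma$. I set $R_d:=c_d/f(0)^{1/d}$.

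The key step is the tail bound. Fix $l\in\mathbb{Z}^{d-j}$ with $\|l\|_2\ge R_d$. For any $k\in\mathbb{Z}^j$ one has $\|(k,l)\|_2\ge\|l\|_2\ge R_d$, so the Concentration Lemma \ref{LC} applies at every point $(k,l)$ and, using the monotonicity of $t\mapsto 2^{-t}$,
\begin{equation*}
f(k,l)\le f(0)\,2^{-\|(k,l)\|_2 f(0)^{1/d}/c_d}\le f(0)\,2^{-\|l\|_2 f(0)^{1/d}/c_d}.
\end{equation*}
Taking the supremum over $k$ gives $M(l)\le f(0)\,2^{-\alpha_d\|l\|_2/\sigma}$ for some $\alpha_d>0$ depending only on $d$, since $f(0)^{1/d}/c_d\simeq_d 1/\sigma$. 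On the core $\{\|l\|_2<R_d\}$ I would just use $M(l)\le \max f\le e^d f(0)$.

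Putting the pieces together, the core contribution is at most $e^d f(0)\cdot R_d^i\cdot\#\{l\in\mathbb{Z}^{d-j}:\|l\|_2<R_d\}=O_d(\sigma^{-d}\cdot\sigma^i\cdot\sigma^{d-j})=O_d(\sigma^{i-j})$. For the tail, a routine sum-to-integral comparison followed by the change of variables $x=\sigma y$ in $\int_{\mathbb{R}^{d-j}}|x_1|^i 2^{-\alpha_d\|x\|_2/\sigma}\,dx$ produces an upper bound of order $f(0)\sigma^{d-j+i}=O_d(\sigma^{i-j})$, since $\int_{\mathbb{R}^{d-j}}|y_1|^i 2^{-\alpha_d\|y\|_2}\,dy$ is a finite constant depending only on $d$. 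Adding the two contributions delivers the claim. The only delicate point is the sum-to-integral comparison in the tail when $d-j$ is small, but this is harmless because for $\sigma$ large enough the summand is slowly varying on unit cubes, so the comparison loses only a dimension-dependent constant.
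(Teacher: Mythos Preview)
Your proof is correct and follows essentially the same approach as the paper's: both set the threshold $R_d=\lambda=c_d f(0)^{-1/d}\simeq_d\sigma$, split into core and tail, bound the core by $\max f$ times a lattice-point count, and on the tail apply the Concentration Lemma~\ref{LC} via $\|(k,l)\|_2\ge\|l\|_2$. The only cosmetic differences are that the paper splits according to $\|l\|_\infty$ rather than $\|l\|_2$, and that for the tail it passes to $\|l\|_1$ via $\|l\|_2\ge\|l\|_1/\sqrt{d}$ to factorise the sum into explicit one-dimensional geometric series, whereas you invoke a sum-to-integral comparison followed by the scaling $x=\sigma y$; both routes give the same $O_d(\sigma^{i-j})$ bound.
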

\begin{proof}
Set $\lambda=c_df(0)^{-1/d}$, with $c_d$ is given in Lemma \ref{LC}. Then $f(x)\le f(0)2^{-\|x\|_2/\lambda}$, for $\|x\|_2\ge\lambda$. Moreover, from the definition of $L_f$, its bounds and the inequality $f(0)\ge e^{-d}\max(f)$, we have 
\[
\frac{c_d\sigma}{L_d}\le \lambda=c_df(0)^{-1/d}\le \frac{c_de\sigma}{L_f}\le c_d\sqrt{2\pi e^3}\sigma.
\]
Note that 
\begin{equation*}
\sum_{l\in \mathbb{Z}^{d-j}}|l_1|^i\max_{k\in \mathbb{Z}^j}f(k,l)=\sum_{l\in \mathbb{Z}^{d-j},\|l\|_{\infty} \leq \lambda}|l_1|^i\max_{k\in \mathbb{Z}^j}f(k,l)+\sum_{l\in \mathbb{Z}^{d-j},\|l\|_{\infty}>\lambda}|l_1|^i \max_{k\in \mathbb{Z}^j}f(k,l) ,
\end{equation*}
The first sum is easily upper bounded by ${O_d}(\frac{1}{\sigma^{j-i}})$. Indeed, 
\[
\sum_{l\in \mathbb{Z}^{d-j},\|l\|_{\infty} \le\lambda}|l_1|^i\max_{k\in \mathbb{Z}^j}f(k,l)\leq (\max f) (2\lambda)^{d+i-j}=\frac{L_f^d}{\sigma^d}(2\lambda)^{d+i-j}\le \frac{L_d^d(2c_d\sqrt{2\pi e^3})^{d+i-j}}{\sigma^{j-i}} ={O_d}\left(\frac{1}{\sigma^{j-i}}\right).
\]
Using the tails estimates of Lemma \ref{LC} and the fact that for $l\in\mathbb{Z}^{d-j}$, one has $\|l\|_2\ge\|l\|_\infty$ and $\|l\|_2\ge\frac{\|l\|_1}{\sqrt{d-j}}\ge\frac{\|l\|_1}{\sqrt{d}} $ the second sum can be expressed as follows
\begin{align*}
\sum_{l\in \mathbb{Z}^{d-j},\|l\|_{\infty}>\lambda}|l_1|^i\max_{k\in \mathbb{Z}^j}f(k,l)
&\leq f(0) \sum_{l\in \mathbb{Z}^{d-j},\|l\|_{\infty}>\lambda}|l_1|^i2^{-\frac{\|l\|_1}{\lambda\sqrt{d}}}\\
&= f(0)\left(\sum_{|l_1|>\lambda}|l_1|^{i}2^{-\frac{|l_1|}{\lambda\sqrt{d}}}\right)\left(\sum_{|n|>\lambda}2^{-\frac{|n|}{\lambda\sqrt{d}}}\right)^{d-j-1}.
\end{align*}
Putting $x=2^{-1/\lambda\sqrt{d}}$ and assuming that $\lambda\sqrt{d}\ge1$, we have $x\in[\frac{1}{2},1)$ and $\frac{1}{2}\le\lambda\sqrt{d}(1-x)\le\ln(2)$. Thus 
\[
\sum_{|n|>\lambda}2^{-\frac{|n|}{\lambda\sqrt{d}}}=2\sum_{n>\lambda}x^{n}\le\frac{2}{1-x}\le 4\lambda\sqrt{d}. 
\]
Hence, for $i=0$, replacing $\lambda$ by its value, for $\sigma\ge \frac{L_d}{\sqrt{d}c_d}$, we have $\lambda\ge 1/\sqrt{d}$ and thus
\[
\sum_{l\in \mathbb{Z}^{d-j},\|l\|_{\infty}>\lambda}\max_{k\in \mathbb{Z}^j}f(k,l)\le f(0)\left(4\lambda\sqrt{d}\right)^{d-j}=\left(4\sqrt{d}c_d\right)^{d-j}f(0)^\frac{j}{d}
\le \frac{(4\sqrt{d}c_d)^{d-j}L_d^j}{\sigma^j}={O_d}\left(\frac{1}{\sigma^j}\right).
\]
And, for $i=1$, it is not difficult to see that, for $\lambda\ge 1/\sqrt{d}$, we have
\[
\sum_{|l_1|>\lambda}|l_1|2^{-\frac{|l_1|}{\lambda\sqrt{d}}}=2\sum_{n>\lambda}nx^n=2x\left(\sum_{n>\lambda}x^n\right)'\le \frac{6}{(1-x)^2}\le 24d\lambda^2.
\]
Thus, again for $\sigma\ge \frac{L_d}{\sqrt{d}c_d}$, which ensures that $\lambda\ge 1/\sqrt{d}$, we deduce
\[
\sum_{l\in \mathbb{Z}^{d-j},\|l\|_{\infty}>\lambda}|l_1|\max_{k\in \mathbb{Z}^j}f(k,l)\le f(0)\times 24d\lambda^2\left(4\lambda\sqrt{d}\right)^{d-j-1}\le 2f(0)\left(4\lambda\sqrt{d}\right)^{d-j+1}={O_d}\left(\frac{1}{\sigma^{j-1}}\right).
\]
In the same way, for $i=2$, we easily get that, for $\lambda\ge 1/\sqrt{d}$,
\[
\sum_{|l_1|>\lambda}l_1^22^{-\frac{|l_1|}{\lambda\sqrt{d}}}=2\sum_{n>\lambda}n^2x^n=2x^2\left(\sum_{n>\lambda}x^n\right)''+2\sum_{n>\lambda}nx^n\le \frac{14}{(1-x)^3}+\frac{12}{(1-x)^2}\le 26(2\lambda\sqrt{d})^3.
\]
Thus, again for $\sigma\ge \frac{L_d}{\sqrt{d}c_d}$, which ensures that $\lambda\ge 1/\sqrt{d}$, we deduce
\[
\sum_{l\in \mathbb{Z}^{d-j},\|l\|_{\infty}>\lambda}l_1^2\max_{k\in \mathbb{Z}^j}f(k,l)\le f(0)\times 26(2\lambda\sqrt{d})^3\left(4\lambda\sqrt{d}\right)^{d-j-1}\le 4f(0)\left(4\lambda\sqrt{d}\right)^{d-j+2}={O_d}\left(\frac{1}{\sigma^{j-2}}\right).
\]

\end{proof}

\subsection{Approximating the integral, mean and covariance discretely} \label{cntsdiscretesection}

In this section, we will approximate the isotropic constant of an isotropic, log-concave density $f\in \mathbb{R}^{d}$ in a discrete way. To do so, we will first approximate the integral as a sum and subsequently approximate the continuous covariance of $f$ by its discrete covariance. 


\begin{proposition}\label{sum-int}
Let $f: \mathbb{R}^{d} \rightarrow \mathbb{R}$ be a  log-concave isotropic density function with covariance matrix of the form $\sigma^2 \mathrm{I}_{\mathrm{d}}$. Then 
\begin{equation*}
    \left|\int_{\mathbb{R}^{d}} fdx-\sum_{k\in \mathbb{Z}^{d}}f(k)\right|={O_d}\Bigl(\frac{1}{\sigma}\Bigr) \hspace{0.1cm} .
\end{equation*}
\end{proposition}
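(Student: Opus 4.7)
\medskip
\noindent
\textbf{Proof sketch.}
The plan is to reduce the $d$-dimensional sum-integral comparison to $d$ one-dimensional comparisons by a coordinate-wise telescoping, and then to control each resulting mixed (discrete/continuous) error using the concentration Lemma \ref{LC} together with the bound $f(0)\asymp_d 1/\sigma^d$ coming from the isotropic constant estimate of Lemma \ref{KB}.

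First, the identity $\sum_{k\in\mathbb{Z}^d}f(k)=\int f(\lfloor y\rfloor)\,dy$ yields
\[
\Bigl|\int f-\sum_k f(k)\Bigr|\le\int_{\mathbb{R}^d}|f(y)-f(\lfloor y\rfloor)|\,dy\le\sum_{i=1}^d\int_{\mathbb{R}^d}|\tilde g_i(y)-\tilde g_{i-1}(y)|\,dy,
\]
where $\tilde g_i(y):=f(\lfloor y_1\rfloor,\dots,\lfloor y_i\rfloor,y_{i+1},\dots,y_d)$, so that $\tilde g_0=f$ and $\tilde g_d=f\circ\lfloor\cdot\rfloor$. For each fixed $i$, setting $k_{<i}=(\lfloor y_1\rfloor,\dots,\lfloor y_{i-1}\rfloor)$ and $y_{>i}=(y_{i+1},\dots,y_d)$, the integrand $|\tilde g_i-\tilde g_{i-1}|$ equals $|h(y_i)-h(\lfloor y_i\rfloor)|$ where $h(t)=f(k_{<i},t,y_{>i})$ is a one-dimensional log-concave function. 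The standard unimodal estimate $\sum_{k_i\in\mathbb{Z}}\int_{k_i}^{k_i+1}|h(t)-h(k_i)|\,dt\le C\max_t h(t)$ (obtained by telescoping on the two monotone tails and controlling the single unit interval containing the mode), combined with integration over $y_{>i}\in\mathbb{R}^{d-i}$ and summation over $k_{<i}\in\mathbb{Z}^{i-1}$, gives
\[
\int_{\mathbb{R}^d}|\tilde g_i-\tilde g_{i-1}|\,dy\le C\cdot E_i,\qquad E_i:=\sum_{k_{<i}\in\mathbb{Z}^{i-1}}\int_{\mathbb{R}^{d-i}}\max_{t\in\mathbb{R}}f(k_{<i},t,y_{>i})\,dy_{>i}.
\]

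To show $E_i=O_d(1/\sigma)$, set $\lambda:=c_d/f(0)^{1/d}$, so that $\lambda\asymp_d\sigma$. Lemma \ref{LC} gives $f(x)\le f(0)\,2^{-\|x\|_2/\lambda}$ whenever $\|x\|_2\ge\lambda$; since $\|(k_{<i},y_{>i})\|_2\le\|(k_{<i},t,y_{>i})\|_2$ for every $t$, as soon as $\|(k_{<i},y_{>i})\|_2\ge\lambda$ this bound applies for every $t$, and maximising its right-hand side in $t$ (attained at $t=0$) gives
\[
\max_t f(k_{<i},t,y_{>i})\le f(0)\,2^{-\|(k_{<i},y_{>i})\|_2/\lambda}.
\]
I would then split $E_i$ into a bulk part over $\{\|(k_{<i},y_{>i})\|_2<\lambda\}$, bounded trivially by $\max f\cdot O_d(\lambda^{d-1})=O_d(\sigma^{-d}\sigma^{d-1})=O_d(1/\sigma)$, and a tail part over $\{\|(k_{<i},y_{>i})\|_2\ge\lambda\}$, which decouples via $\|\cdot\|_2\ge\|\cdot\|_1/\sqrt{d-1}$ into a product of $d-1$ one-dimensional sums/integrals of $2^{-|\cdot|/(\lambda\sqrt{d-1})}$, each of size $O_d(\lambda)$, totalling $O_d(f(0)\,\lambda^{d-1})=O_d(1/\sigma)$. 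Summing over $i=1,\dots,d$ yields the proposition.

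The main technical difficulty is that each $E_i$ mixes discrete sums over $k_{<i}\in\mathbb{Z}^{i-1}$ with continuous integrals over $y_{>i}\in\mathbb{R}^{d-i}$, so the purely discrete Lemma \ref{sumofmaximalemma} cannot be invoked directly. The concentration Lemma \ref{LC} supplies the correct replacement; the crucial observation is that the exponential decay $f(x)\le f(0)\,2^{-\|x\|_2/\lambda}$ survives the maximisation over the continuous variable $t$ precisely because, in the tail regime, the norm of the non-$t$ coordinates already exceeds $\lambda$, so the concentration bound holds uniformly in $t$ and its $t$-maximum is attained at $t=0$.
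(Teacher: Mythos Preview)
Your argument is correct, and the organization is genuinely different from the paper's. The paper proceeds by induction on $d$: it marginalises out the first coordinate to get a $(d-1)$-dimensional isotropic log-concave density $F(y)=\int_{\mathbb{R}}f(x,y)\,dx$, applies the inductive hypothesis to $F$, and then controls the remaining error $\sum_{y\in\mathbb{Z}^{d-1}}\bigl|\int f(x,y)\,dx-\sum_{k_1}f(k_1,y)\bigr|$ by the one-dimensional estimate $|\int h-\sum h|\le\max h$ on each slice, followed by the purely discrete Lemma~\ref{sumofmaximalemma}. Thus the paper's error terms are sums of maxima of successive \emph{marginals}, each of which is again isotropic log-concave in its own right. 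You instead telescope all $d$ coordinates at once and produce $d$ error terms $E_i$ that are sums/integrals of maxima of \emph{slices} of $f$, with the first $i-1$ coordinates discrete and the last $d-i$ continuous; since Lemma~\ref{sumofmaximalemma} is stated only for discrete variables, you correctly bypass it and bound each $E_i$ directly from the concentration Lemma~\ref{LC}, using $\|(k_{<i},y_{>i})\|_2\le\|(k_{<i},t,y_{>i})\|_2$ to make the decay survive the maximisation in $t$.

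Both routes ultimately rest on the same two inputs (the one-dimensional $|\int h-\sum h|\le C\max h$ for unimodal $h$, and the concentration $f(x)\le f(0)2^{-\|x\|_2/\lambda}$ with $\lambda\asymp_d\sigma$). The paper's induction is slightly more modular because the intermediate objects are again isotropic log-concave densities and one can invoke Lemma~\ref{sumofmaximalemma} verbatim; your approach is more direct, avoids the induction entirely, and in effect proves a mixed discrete/continuous variant of Lemma~\ref{sumofmaximalemma} along the way, which is a mild strengthening worth noting.
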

\begin{proof}

Let us proceed by induction. Let us first consider the one dimensional base case. Let $f:\mathbb{R}\rightarrow \mathbb{R}$ be a log-concave function and let suppose that the maximum of $f$ is attained at $x_{0}\in \mathbb{R}$. Let $k_{0}\in \mathbb{Z}$ such that $k_{0}\leq x_{0}< k_{0}+1$. Then, one has 

\begin{align*}
    \int_{\mathbb{R}} fdx &=\sum_{k\in \mathbb{Z}} \int_{k}^{k+1}f(x)dx \\
    &\geq \sum_{k<k_{0}}f(k)+\sum_{k\geq k_{0}+1}f(k+1)+\min\{f(k_{0}),f(k_{0}+1)\}\\ 
    &=\sum_{k\in \mathbb{Z}} f(k)-f(k_{0})-f(k_{0}+1)+\min\{f(k_{0}),f(k_{0}+1)\} \\
    &=\sum_{k\in \mathbb{Z}} f(k)-\max\{f(k_{0}),f(k_{0}+1)\} \\
    &= \sum_{k\in \mathbb{Z}} f(k)-\max_{\mathbb{Z}} f.
\end{align*}
The reverse inequality is  analogous and thus one gets 
\begin{equation}\label{int-sum-dim-1}
    \left|\int_{\mathbb{R}}fdx-\sum_{k\in \mathbb{Z}}f(k)\right|\leq \max_{\mathbb{Z}} f\leq \max_{\mathbb{R}} f.
\end{equation}
For $d=1$, we get $\max_{\mathbb{R}}(f)=\frac{L_f}{\sigma}\leq \frac{1}{\sigma}$, by \cite{fradelizi2}, which proves the base case. Notice that the inequality \eqref{int-sum-dim-1} holds as soon as $f$ is quasi-concave on $\R$, \rm{i.e.} if $f$ is first non-decreasing then non-increasing and not necessarily a density.

Let $x\in \mathbb{R}$, $y\in \mathbb{R}^{d-1}$ and let 
$F(y):=\int_{\mathbb{R}}f(x,y)dx$.
Note that $F$ is log-concave as the marginal of a log-concave function. Moreover $F$ is an isotropic density and $\Cov_{\R^{d-1}}(F)=\sigma^2 I_{d-1}$. One has
\[
 \left|\int_{\mathbb{R}^{d}} f(x,y)dxdy-\sum_{k\in \mathbb{Z}^{d}}f(k)\right|\le \left|\int_{\mathbb{R}^{d-1}} F(y)dy-\sum_{y\in \mathbb{Z}^{d-1}}F(y)\right|+\sum_{y\in \mathbb{Z}^{d-1}}\left|\int_\R f(x,y)dx-\sum_{k\in\mathbb{Z}}f(k,y)\right|.
\]
By the inductive hypothesis the first term is upper bounded by $C_{d-1}/\sigma$ for some constant $C_{d-1}>0$. Since for every $y\in\mathbb{Z}^{d-1}$, the function $x\mapsto f(x,y)$ is quasi-concave, one may use \eqref{int-sum-dim-1} to get that the second term is upper bounded by 
\[
\sum_{y\in \mathbb{Z}^{d-1}}\max_{x\in \mathbb{Z}} f(x,y).
\]
We conclude by Lemma \ref{sumofmaximalemma}.
\end{proof}

{\bf A remark from convex geometry: the case of isotropic convex bodies.} Applying the previous proposition to $f=\mathds{1}_{K}/|K|_d$, where $K$ is an isotropic convex body in $\R^d$ and using that, in this case, $\sigma=L_K|K|^{1/d}$,  we deduce the following proposition.
\begin{proposition} For any isotropic convex body $K$ in $\mathbb{R}^{d}$, 
\begin{equation*}
   \left|\frac{ \#(K\cap \mathbb{Z}^{d}) }{|K|_{d}}-1\right|\le \frac{M_d}{|K|^\frac{1}{d}} \hspace{0.1cm},
\end{equation*}
for some constant $M_d>0$, depending only on the dimension,
where $\#K$ denotes the cardinality of a discrete set $K$. 
\end{proposition}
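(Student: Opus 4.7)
The plan is to apply Proposition \ref{sum-int} directly to the rescaled indicator function $f=\mathds{1}_K/|K|_d$. First I would verify that $f$ satisfies the hypotheses of that proposition: since $K$ is convex, $\mathds{1}_K$ is log-concave (with the convention $\log 0=-\infty$) and so is $f$; since $f=\mathds{1}_K/|K|$, we have $\int_{\R^d}f=1$ so $f$ is a density; and since $K$ is assumed isotropic, $\Cov(f)=\Cov(\mathds{1}_K)=L_K^2|K|^{2/d}\,\mathrm{I}_d$, so $f$ is isotropic with $\sigma=L_K|K|^{1/d}$. (After a harmless translation we may assume $f$ is centered, which does not affect either $\#(K\cap\mathbb{Z}^d)$ up to a choice of lattice shift, but one can alternatively inspect the proof of Proposition \ref{sum-int} to see that centering is not essential for the integral--sum comparison, only log-concavity and isotropicity of the covariance.)

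Next I would apply Proposition \ref{sum-int} to obtain
\[
\left|\int_{\R^d}f\,dx-\sum_{k\in\mathbb{Z}^d}f(k)\right|={O_d}\!\left(\frac{1}{\sigma}\right).
\]
The left-hand side equals $\bigl|1-\#(K\cap\mathbb{Z}^d)/|K|_d\bigr|$, since $\sum_{k\in\mathbb{Z}^d}\mathds{1}_K(k)=\#(K\cap\mathbb{Z}^d)$. Substituting $\sigma=L_K|K|^{1/d}$ then yields
\[
\left|\frac{\#(K\cap\mathbb{Z}^d)}{|K|_d}-1\right|\le\frac{C_d}{L_K|K|^{1/d}}.
\]

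Finally, I would invoke the universal lower bound $L_K\ge L_f\ge 1/\sqrt{2\pi e}$ (already recalled in the introduction, coming from the Gaussian maximum-entropy inequality) to absorb $1/L_K$ into a dimensional constant, obtaining
\[
\left|\frac{\#(K\cap\mathbb{Z}^d)}{|K|_d}-1\right|\le\frac{M_d}{|K|^{1/d}},
\]
with $M_d:=\sqrt{2\pi e}\,C_d$, which is the claimed bound.

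The main (and only nontrivial) obstacle is checking that Proposition \ref{sum-int}, which was stated for log-concave densities, genuinely applies to the indicator-type density $\mathds{1}_K/|K|$. Inspecting its proof, what is used in the one-dimensional base case is only quasi-concavity of the sliced function $x\mapsto f(x,y)$ together with Lemma \ref{sumofmaximalemma} to bound $\sum_{y}\max_x f(x,y)$; both go through for $\mathds{1}_K/|K|$ without change because sections of convex bodies are intervals (hence the one-dimensional quasi-concave comparison is sharp) and the concentration/maximum estimates used in Lemma \ref{sumofmaximalemma} only depend on isotropicity and log-concavity. So no additional work is needed beyond this verification.
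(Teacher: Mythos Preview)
Your proposal is correct and matches the paper's approach exactly: the paper simply states that applying Proposition \ref{sum-int} to $f=\mathds{1}_K/|K|_d$ with $\sigma=L_K|K|^{1/d}$ yields the result. Your additional checks (log-concavity of $\mathds{1}_K$, the universal lower bound $L_K\ge 1/\sqrt{2\pi e}$, and the observation that only quasi-concavity of one-dimensional sections is used) are appropriate and fill in details the paper leaves implicit.
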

\begin{remark}\label{counterexample}
The hypothesis of isotropicity is necessary, since in $\mathbb{Z}^{d}$, for $d\ge2$,
it is easy to construct convex bodies that do not contain integer points, but whose volumes are increasingly large. 
\end{remark}

\begin{proposition}\label{covdis}
Let $f:\mathbb{R}^{d}\rightarrow \mathbb{R}$ be a centered, isotropic, log-concave density. Then, for $1\le i\le d$,
\begin{equation*}
   \sum_{k\in \mathbb{Z}^{d}}k_{i}f(k)= \int_{\mathbb{R}^{d}}x_{i}f(x)dx +O_d(1)=O_d(1). \hspace{0.1cm} 
\end{equation*}
In particular, for $d=1$, the following finite bound holds for any log-concave centered  integrable function
\begin{equation*}
    \left|\int_{\mathbb{R}}xf(x)dx-\sum_{k\in \mathbb{Z}}kf(k)\right|\leq (e+1)\sum_{k\in \mathbb{Z}} f(k) \hspace{0.1cm} .
\end{equation*} 
 \end{proposition}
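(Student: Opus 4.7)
My plan is induction on the dimension $d$, with the one-dimensional statement serving as the base case. For $d=1$, using $\int xf\,dx = 0$, I write
\[
\sum_k kf(k) - \int_\R xf(x)\,dx = \sum_k\int_k^{k+1}(k-x)f(x)\,dx + \sum_k k\Bigl(f(k) - \int_k^{k+1}f(x)\,dx\Bigr).
\]
The first sum has absolute value at most $\int f$ since $|k-x|\le 1$ on $[k,k+1]$; combining the 1D base of Proposition \ref{sum-int} with Fradelizi's inequality $f(0)\ge e^{-1}\max f$ gives $\int f \le \sum_k f(k) + \max f \le (1+e)\sum_k f(k)$. For the second sum, Abel summation yields
\[
\sum_{k\ge 1}k\Bigl(f(k) - \int_k^{k+1}f\Bigr) = \sum_{j\ge 1}\Bigl(\sum_{k\ge j}f(k) - \int_j^\infty f\Bigr),
\]
and each tail difference is bounded by $\max_{x\ge j}f(x)$ via the quasi-concave 1D base of Proposition \ref{sum-int}. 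Past the mode this equals $f(j)$ and telescopes to at most $\sum_k f(k)$; the single term near the mode contributes an extra $\max f \le e\sum_k f(k)$. The symmetric analysis for $k\le-1$ produces the advertised $(e+1)\sum_k f(k)$ bound.

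For the inductive step, fix $i=1$ without loss of generality and introduce
\[
\tilde G(x_1) := \int_{\R^{d-1}}f(x_1,x')\,dx',\qquad G(k_1) := \sum_{k'\in\Z^{d-1}}f(k_1,k').
\]
I decompose
\[
\sum_{k\in\Z^d}k_1 f(k) - \int_{\R^d}x_1 f\,dx = \Bigl[\sum_{k_1}k_1\tilde G(k_1) - \int x_1\tilde G\,dx_1\Bigr] + \sum_{k_1}k_1\bigl(G(k_1) - \tilde G(k_1)\bigr).
\]
By Pr\'ekopa, $\tilde G$ is centered log-concave on $\R$, so the bracket is $O(1)$ by the base case. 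For the remainder, iterating the quasi-concave 1D base of Proposition \ref{sum-int} along the remaining $d-1$ coordinates bounds $|G(k_1)-\tilde G(k_1)|$ by a sum of slice-maxima of the shape $\sum_{l}\max_{m}f(k_1,l,m)$ where $(l,m)$ partitions the $d-1$ remaining coordinates. Multiplying by $|k_1|$ and summing over $k_1\in\Z$, each piece becomes a sum of the form addressed by Lemma \ref{sumofmaximalemma} with weight $|l_1|^1$ (that is, $i=1$) and at least one maxed coordinate ($j\ge 1$), yielding $O_d(\sigma^{1-j}) = O_d(1)$.

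The hardest step is the weighted error estimate in the inductive split: pointwise smallness of $|G(k_1)-\tilde G(k_1)|$ is counterbalanced by the $|k_1|$-factor of size $\sim\sigma$ and a summation support of width $\sim\sigma$, so a naive product bound fails. The correct route is precisely Lemma \ref{sumofmaximalemma} with $i=1$, which is tailored to capture such weighted sums of slice-maxima and exactly cancels the $\sigma$-factors to deliver the required $O_d(1)$ bound.
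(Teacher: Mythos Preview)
Your approach delivers the $O_d(1)$ conclusion, but two details are not right as stated, and the explicit $d=1$ constant $e+1$ is not achieved by your decomposition.

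\textbf{Base case.} The three pieces of your splitting---the $\sum_k\int_k^{k+1}(k-x)f$ term and the two Abel tails---each cost up to $(1+e)\sum_k f(k)$, so your method yields a constant of order $3+2e$, not $e+1$. Also, ``the single term near the mode'' is wrong: if the mode $x_0\ge1$ then every $1\le j\le\lfloor x_0\rfloor$ has $\max_{x\ge j}f=\max f$, and these $\lfloor x_0\rfloor$ terms together cost $\lfloor x_0\rfloor\max f$, which is $\le e\sum_k f(k)$ only after invoking $\sum_{k=0}^{\lfloor x_0\rfloor}f(k)\ge(\lfloor x_0\rfloor+1)f(0)\ge(\lfloor x_0\rfloor+1)e^{-1}\max f$. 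The paper obtains the sharp constant by a direct monotonicity comparison: it bounds each $\int_k^{k+1}xf$ below by $kf(k)$ or $kf(k+1)$ according to the position of $k$ relative to the mode, and then absorbs the boundary contribution $k_0\max\{f(k_0),f(k_0+1)\}$ via $k_0\max f\le e\sum_k f(k)$.

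\textbf{Inductive remainder.} Your assertion that iterating the one-dimensional bound gives error terms ``of the shape $\sum_l\max_m f(k_1,l,m)$'' is incorrect for $d\ge3$. After the first coordinate is handled you are left comparing sum and integral of a \emph{marginal} of $f(k_1,\cdot)$, and the next step produces a slice-maximum of that marginal, e.g.\ $\max_{x_2}\int_\R f(k_1,x_2,x_3)\,dx_3$ when $d=3$, which is not a slice-maximum of $f$. The repair is easy: each such partial marginal of $f$ is itself a centered isotropic log-concave density on a lower-dimensional space, so Lemma~\ref{sumofmaximalemma} (with $i=1$, $j=1$) applies to it and still gives $O_d(1)$ after weighting by $|k_1|$ and summing.

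\textbf{Comparison with the paper.} The paper marginalizes in the opposite direction: it integrates out one coordinate to obtain $F$ on $\R^{d-1}$, applies the genuine inductive hypothesis to $F$, and then bounds the remaining error $\sum_{k\in\Z^{d-1}}|k_i|\bigl|F(k)-\sum_{k_1}f(k_1,k)\bigr|$ by $\sum_{k\in\Z^{d-1}}|k_i|\max_{x}f(x,k)$, a single clean application of Lemma~\ref{sumofmaximalemma} to $f$ itself. Your route trades the inductive hypothesis for the $d=1$ base case, at the price of the iterated-marginal bookkeeping above.
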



\begin{proof}

The proof is by induction again. Let us make a one dimensional remark that will be used on the base case of the induction. Let $f:\mathbb{R}\rightarrow \mathbb{R}$ be a centered log-concave function and let suppose that the maximum of $f$ is attained at $x_{0}\in \mathbb{R}$ and let us suppose without loss of generality that $x_{0}>0$. Then since $f$ is log-concave and by the  inequality \cite[Theorem 4]{fradelizi} one has
\begin{equation}\label{maxlog}
    \int_{\mathbb{R}} f(x)dx\geq \int_{0}^{x_{0}}f(x)dx\geq f(0)x_{0}\geq \frac{\max f}{e} x_{0} \hspace{0.1cm} .
\end{equation}
Hence, we get $x_{0}\max f \leq e \int_{\mathbb{R}}f(x)dx$. The case of the summation is analogous. Let $k_{0}\in \mathbb{Z}$ be such that $k_{0}\leq x_{0}< k_{0}+1$ . Then, we deduce
\begin{equation}\label{conseqfrad}
    \sum_{k\in \mathbb{Z}} f(k)\geq \sum_{k=0}^{k_{0}} f(k)\geq (k_{0}+1)f(0)\geq \frac{\max f}{e}(k_{0}+1) \hspace{0.1cm} .
\end{equation}
Let us start the induction. Let $f$ be a centered, isotropic, log-concave density and let suppose that the maximum of $f$ is attained at $x_{0}$. We decompose the integral $\int_{\mathbb{R}} xf(x)dx$ as follows
\begin{align*}
\int_{\mathbb{R}} xf(x)dx &= \sum_{k<0} \int_{k}^{k+1}xf(x)dx + \sum_{k=0}^{k_{0}-1} \int_{k}^{k+1}xf(x)dx+\int_{k_{0}}^{k_{0}+1}xf(x)dx+\sum_{k\geq k_{0}+1}\int_{k}^{k+1}xf(x)dx \\
&\geq \sum_{k<0}kf(k+1)+\sum_{k=0}^{k_{0}-1}kf(k)+k_{0}\min \{f(k_{0}),f(k_{0}+1)\}+\sum_{k\geq k_{0}+1}kf(k+1)\\
&\geq \sum_{k\in \mathbb{Z}}kf(k)-k_{0}f(k_{0})-k_{0}f(k_{0}+1)+k_{0}\min\{f(k_{0}),f(k_{0}+1)\}-\sum_{k\in \mathbb{Z}}f(k). 
\end{align*}
Therefore, 
\begin{align*}
\int_{\mathbb{R}} xf(x)dx &\geq \sum_{k\in \mathbb{Z}}kf(k)-\sum_{k\in \mathbb{Z}}f(k)-k_{0}\max f \\
&\geq \sum_{k\in \mathbb{Z}}kf(k)-\sum_{k \in \mathbb{Z}}f(k)-e\sum_{k\in \mathbb{Z}}f(k)=\sum_{k\in \mathbb{Z}}kf(k)-(e+1)\sum_{k\in \mathbb{Z}}f(k) ,
\end{align*}
where in the last line we have used the inequality \eqref{conseqfrad}. Bounding from the above $\int_{\mathbb{R}} xf(x)dx$ we obtain a similar inequality.
Thus, we get
\begin{equation*}
    \left|\int_{\mathbb{R}}xf(x)dx-\sum_{k\in \mathbb{Z}}kf(k)\right|\leq (e+1)\sum_{k\in \mathbb{Z}} f(k) \hspace{0.1cm} ,
\end{equation*}
and the one-dimensional case follows by Proposition \ref{sum-int}.

For the inductive step, let $F:\mathbb{R}^{d-1} \to \mathbb{R}$ be defined, for $x \in \R, y \in \R^{d-1}$, by
\begin{equation*}
F(y):=\int_{\mathbb{R}} f(x,y)dx.
\end{equation*}
Then $F$ is log-concave, centered and isotropic as the marginal of a log-concave, centered and isotropic density and, for $i=1,\ldots,d-1$
\begin{equation*}
    \int_{\mathbb{R}^{d}} y_{i}f(x,y)dxdy=\int_{\mathbb{R}^{d-1}}y_{i}F(y)dy .
\end{equation*}
We have
\begin{align} \nonumber
 &\left| \int_{\mathbb{R}^{d}} y_{i}f(x,y)dxdy - \sum_{k_1\in \Z, k\in \mathbb{Z}^{d-1}} k_{i}f(k_1,k)\right|\\
\nonumber
&\leq \left|\int_{\mathbb{R}^{d}} y_{i}f(x,y)dxdy-\sum_{k\in \mathbb{Z}^{d-1}}k_{i}F(k)\right| + \left|\sum_{k\in \mathbb{Z}^{d-1}}k_{i}F(k)- \sum_{k\in \mathbb{Z}^{d}}k_{i}f(k)\right|.
\end{align}
But the first term is $O_d(1)$ by inductive hypothesis and for the second we note that $f(\cdot,k)$ is log-concave for every $k\in \Z^{d-1}$, so we may apply \eqref{int-sum-dim-1} to get, for any $k\in\Z^{d-1}$, 
\[
\left|F(k) - \sum_{k_1\in\Z}f(k_1,k)\right| =\left| \int_{\R}{f(x,k)dx}-\sum_{k_1\in\Z}{f(k_1,k)}\right|\le\max_{x\in\R}f(x,k),
\]
which gives
\[
\left| \int_{\mathbb{R}^{d}} y_{i}f(x,y)dxdy - \sum_{k_1\in \Z, k\in \mathbb{Z}^{d-1}} k_{i}f(k_1,k)\right| \leq O_d(1)+ \sum_{k\in \mathbb{Z}^{d-1}}k_{i} \max_{x\in  \mathbb{R}} f(x,k) =O_d\bigl({1}\bigr), \label{asinsumofmax}
\]
where  the last equality follows by Lemma \ref{sumofmaximalemma}. 
\end{proof}

We wish to show that a continuous isotropic, log-concave density $f$ in $\mathbb{R}^d$ is almost isotropic in the discrete sense, meaning that its discrete covariance matrix $\Cov_{\mathbb{Z}}{(f)}$ has $O(\sigma)$ off-diagonal elements and $\sigma^2 + O(\sigma)$ diagonal elements.

\begin{proposition} \label{variancediscreteapprox}
Let $f:\mathbb{R}^{d}\rightarrow \mathbb{R}$ be a centered, isotropic, log-concave density with $\Cov{(f)} = \sigma^2\mathrm{I}_{\mathrm{d}}$. Then
\begin{equation*}
    \sum_{k\in\mathbb{Z}^d} f(k)k_i^{2} = \sigma^2 +O_d\big(\sigma\big), \hspace{0.1cm} \text{for every }\ 1\leq i \leq d.
\end{equation*}
\end{proposition}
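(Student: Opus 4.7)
My plan is to mimic the inductive strategy in the proofs of Propositions \ref{sum-int} and \ref{covdis}: peel off one coordinate at a time, reducing the $d$-dimensional estimate to a one-dimensional slice-wise comparison, and control the errors using Lemma \ref{sumofmaximalemma}. Fix $i\in\{1,\dots,d\}$; at each inductive step I would pick any coordinate $x_m$ with $m\neq i$ and marginalize in $x_m$, so that the index $i$ is preserved in the smaller-dimensional problem.

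For the inductive step, set $F(\tilde y):=\int_\R f(x)\,dx_m$, where $\tilde y\in\R^{d-1}$ denotes $x$ with $x_m$ removed. Then $F$ is a centered, isotropic, log-concave density on $\R^{d-1}$ with covariance $\sigma^2 I_{d-1}$, so the inductive hypothesis gives
$$\sum_{\tilde y\in\Z^{d-1}}\tilde y_i^2\,F(\tilde y)=\sigma^2+O_d(\sigma).$$
Applying \eqref{int-sum-dim-1} slice-by-slice in $x_m$ yields, for every $\tilde y$, the bound $|F(\tilde y)-\sum_{k_m\in\Z}f(\tilde y,k_m)|\le\max_{k_m\in\Z}f(\tilde y,k_m)$. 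Multiplying by $\tilde y_i^2$ and summing over $\tilde y$,
$$\Bigl|\sum_{\tilde y\in\Z^{d-1}}\tilde y_i^2 F(\tilde y)-\sum_{k\in\Z^d} k_i^2 f(k)\Bigr|\le \sum_{\tilde y\in\Z^{d-1}}\tilde y_i^2\max_{k_m\in\Z}f(\tilde y,k_m)=O_d(\sigma),$$
where the last equality follows from Lemma \ref{sumofmaximalemma} (with the parameters $2$ and $1$ of that lemma) after a coordinate permutation, licit because the proof of that lemma depends only on $\|\cdot\|_2$- and $\|\cdot\|_\infty$-tails and is symmetric in the outer coordinates. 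Combining the two displays closes the induction.

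The base case $d=1$ is the delicate point since, unlike the zeroth and first moments handled in Propositions \ref{sum-int}--\ref{covdis}, the integrand $x^2 f(x)$ is neither monotone nor log-concave. I would start from the identity
$$\sum_k k^2 f(k)-\int_\R x^2 f(x)\,dx = -\sum_k\int_0^1\bigl((k+t)^2-k^2\bigr)f(k+t)\,dt - \sum_k k^2\int_0^1[f(k+t)-f(k)]\,dt.$$
The first sum is bounded by $\int_\R(2|x|+1)f(x)\,dx=O(\sigma)$ via Cauchy-Schwarz. For the second, let $x_0$ be the mode of $f$ and $k_0=\lfloor x_0\rfloor$; the bounds $\max f=O(1/\sigma)$ (from $L_f\le L_1=O(1)$, \cite{fradelizi2}) and $|x_0|=O(\sigma)$ (from \cite[Theorem 4]{fradelizi}, exactly as in the proof of Proposition \ref{covdis}) control the contribution of the boundary index $k=k_0$ via $k_0^2\max f=O(\sigma)$. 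For $k\neq k_0$ the function $f$ is monotone on $[k,k+1]$, so the inner integral is dominated by $|f(k+1)-f(k)|$; an Abel summation then reduces $\sum_{k\neq k_0}k^2|f(k+1)-f(k)|$ to a combination of $(k_0\pm 1)^2 f(k_0\pm 1)$ and $\sum_k|k|f(k)$, both $O(\sigma)$ (the latter via the exponential tail estimate of Lemma \ref{LC}).

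The main obstacle is this one-dimensional base case, specifically the Abel-summation manipulation of $\sum_k k^2[f(k+t)-f(k)]$ which requires the bounds on $|x_0|$, $\max f$ and $\sum_k|k|f(k)$; the inductive step is then a direct assembly of the quasi-concave slice comparison \eqref{int-sum-dim-1} and the sum-of-maxima bound of Lemma \ref{sumofmaximalemma} already developed earlier in this section.
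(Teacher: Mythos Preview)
Your proof is correct and follows the paper's strategy closely: the inductive step---marginalize one coordinate, apply the hypothesis to the $(d-1)$-dimensional marginal, and pass back via the slice-wise bound \eqref{int-sum-dim-1} together with Lemma~\ref{sumofmaximalemma}---is exactly what the paper does. For the one-dimensional base case you take a slightly different technical route (an algebraic identity for $\sum_k k^2 f(k)-\int x^2 f$ followed by Abel summation) whereas the paper bounds each $\int_k^{k+1} x^2 f(x)\,dx$ directly by endpoint values using monotonicity on either side of the mode; both arguments rest on the same three ingredients, namely $\max f=O(1/\sigma)$, $|x_0|=O(\sigma)$, and a first-moment bound of order $\sigma$, and both give the $O(\sigma)$ error.
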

\begin{proof}

We proceed by induction on the dimension.
For the one-dimensional case let \\ $x_0 = \inf\{x \in \mathbb{R}: f(x) = \max{f}\}$. 
Then
\begin{equation*}
    \int_{\mathbb{R}} f(x)x^{2}=\int_{x\leq x_0}f(x)x^{2}dx+\int_{x>x_0} f(x)x^{2}dx .
\end{equation*}
Noting that $f$ is non-decreasing on $(-\infty,x_0]$ and non-increasing on $[x_0,+\infty)$ by log-concavity, we get
\begin{align} \nonumber
    \int_{\mathbb{R}}f(x)x^{2}dx &=\sum_{k < \fx} \int_{[k,k+1)}f(x)x^{2}dx + \sum_{k > \fx} \int_{[k,k+1)}f(x)x^{2}dx + \int_{\fx}^{\fx+1}f(x)x^2dx
    \\  \nonumber
&\leq \sum_{k < \fx} f(k+1)(k^{2}+k+\frac{1}{3}) + \sum_{k > \fx} f(k)(k^{2}+k+\frac{1}{3}) + \int_{\fx}^{\fx+1}f(x)x^2dx\\ \nonumber
&\leq \sum_{k \in \mathbb{Z}} f(k)k^{2} - \sum_{k < \fx} f(k+1)k  -  \sum_{k > \fx} f(k)k + f(x_0)\Bigl(\fx^2+\fx+\frac{1}{3}\Bigr)  \\ \label{x0lastterm}
&\leq  \sum_{k \in \mathbb{Z}} f(k)k^{2} - \sum_{k \in \mathbb{Z}} f(k)k  + f(x_0)\Bigl(\fx^2+\fx+\frac{1}{3}\Bigr)  
\end{align}
We observe that $f(x_0) = \max{f} \le \frac{1}{\sigma}$ and, by \eqref{maxlog}, $x_0 = O({\sigma})$. Hence, the last term in \eqref{x0lastterm}
is $O(\sigma)$.
Furthermore, by Proposition \ref{sum-int}, the second term in \eqref{x0lastterm} is $O(1)$. Thus, 
$$ \int_{\mathbb{R}}f(x)x^{2}dx \leq \sum_{k \in \mathbb{Z}} f(k)k^{2} + O(\sigma).$$
Similarly, using the piecewise monotonicity of $f$ in the reverse way, we obtain 
$$ \int_{\mathbb{R}}f(x)x^{2}dx \geq \sum_{k \in \mathbb{Z}} f(k)k^{2} + O(\sigma).$$
This proves the base case.\\

Let $x\in \mathbb{R}$, $y\in \mathbb{R}^{d-1}$ and let  
$F(y):=\int_{\mathbb{R}} f(x,y)dx$. Then, by the inductive hypothesis applied to $F$, for $i=1,\ldots,d-1,$
\begin{equation}\label{arg1}
    \int_{\mathbb{R}^{d}}f(x,y)y_{i}^2dxdy=\int_{\mathbb{R}^{d-1}}F(y)y_{i}^2dy=\sum_{k\in \mathbb{Z}^{d-1}}F(k)k_{i}^2+O_d(\sigma) \hspace{0.1cm} .
\end{equation}
Since, for every $k\in\mathbb{Z}^{d-1}$, the function $f(\cdot,k)$ is log-concave in $\mathbb{R}$, we have, by \eqref{int-sum-dim-1}, 
\begin{equation}\label{arg2}
    \left|F(k)-\sum_{k^{\prime}\in \mathbb{Z}} f(k^{\prime},k)\right| = \left|\int_{\mathbb{R}} f(x,k)dx-\sum_{k^{\prime}\in \mathbb{Z}} f(k^{\prime},k)\right| \le \max_{x} f(x,k) \hspace{0.1cm} .
\end{equation}
So by \eqref{arg1} and \eqref{arg2} we get, for $i =1,\ldots,d-1$ 
\begin{align*}
\int_{\mathbb{R}^{d}}f(x,y)y_{i}^2dxdy&=\sum_{ k^{\prime} \in \mathbb{Z},k \in \mathbb{Z}^{d-1} }f(k^{\prime},k)k_{i}^2+\sum_{k\in \mathbb{Z}^{d-1}}O(\max_{x}f(x,k))k_{i}^2+O_d(\sigma) \\
&= \sum_{ k^{\prime} \in \mathbb{Z},k \in \mathbb{Z}^{d-1} }f(k^{\prime},k)k_{i}^2+O_d(\sigma),
\end{align*}
by Lemma \ref{sumofmaximalemma}, and the result follows. 

\end{proof}
\newcommand{\fm}[1]{
 \max_{#1}f(x,y)
}
\newcommand{\re}{\mathbb{R}}
\newcommand{\mm}{
M_{\max}
}

We will need the following lemma for the base case of the induction of Proposition \ref{correlationapprox}. 

\begin{lemma} \label{lemmaintxnmaxf}
Let $f$ be a centered, isotropic, log-concave density in $\mathbb{R}^2$, with $\Cov{(f)} = \sigma^2\mathrm{I}_{\mathrm{2}}$. For $x \in \mathbb{R}$, let $$\nm(x) := \inf\{y \in \mathbb{R}: f(x,y) = \max_y{f(x,y)}\}.$$ Then 
\begin{equation} \label{nmaxint}
    \int_{\mathbb{R}}x\nm(x)\max_yf(x,y)dx = O(\sigma)
\end{equation}
and
\begin{equation} \label{nmaxsum}
    \sum_{k\in \mathbb{Z}}{k\nm(k)\max_yf(k,y)} = O(\sigma).
\end{equation}
\end{lemma}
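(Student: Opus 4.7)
Write $h(x) := \max_y f(x,y) = f(x,\nm(x))$. The plan is to derive both estimates from the concentration estimate of Lemma \ref{LC} applied to the two-dimensional density $f$. Since $f$ is centered, isotropic and log-concave on $\mathbb{R}^2$ with $\Cov(f) = \sigma^2\mathrm{I}_2$, one has $f(0) = \Theta(1/\sigma^2)$ (combining $\max f = L_f^2/\sigma^2$ with the bounds $L_f = \Theta(1)$ in dimension $2$ and $f(0)\ge e^{-2}\max f$), so the scale $\lambda := c_2/f(0)^{1/2}$ appearing in Lemma \ref{LC} satisfies $\lambda = \Theta(\sigma)$, and the lemma reads $f(x,y) \le f(0)\,2^{-\|(x,y)\|_2/\lambda}$ whenever $\|(x,y)\|_2 \ge \lambda$.

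The first step I would establish is the uniform bound $|\nm(x)|\,h(x) = O(1/\sigma)$ valid for every $x \in \mathbb{R}$. If $|\nm(x)| \le \lambda$, I would simply use $h(x) \le f(0)$ to get $|\nm(x)|\,h(x) \le \lambda f(0) = O(1/\sigma)$. If $|\nm(x)| > \lambda$, then $\|(x,\nm(x))\|_2 \ge |\nm(x)| > \lambda$, so Lemma \ref{LC} gives $h(x) \le f(0)\,2^{-|\nm(x)|/\lambda}$, and combining with $\sup_{t\ge 0}t\,2^{-t/\lambda} = O(\lambda)$ again yields $|\nm(x)|\,h(x) \le O(\lambda f(0)) = O(1/\sigma)$.

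Next I would split $\mathbb{R}$ at the threshold $\|(x,\nm(x))\|_2 = \lambda$: let $A = \{x: \|(x,\nm(x))\|_2 < \lambda\}$ and $B = A^{c}$. On $A$ both $|x|$ and $|\nm(x)|$ are bounded by $\lambda$ and $A \subset [-\lambda,\lambda]$, giving $\int_A |x\,\nm(x)\,h(x)|\,dx \le \lambda^2 f(0)\cdot 2\lambda = O(\lambda^3 f(0)) = O(\sigma)$. On $B$, the inequality $\|(x,\nm(x))\|_2 \ge (|x|+|\nm(x)|)/\sqrt 2$ factorises the exponential bound from Lemma \ref{LC} as $f(x,\nm(x)) \le f(0)\,2^{-|x|/(\lambda\sqrt 2)}\,2^{-|\nm(x)|/(\lambda\sqrt 2)}$, so applying $t\,2^{-t/(\lambda\sqrt 2)} = O(\lambda)$ at $t = |\nm(x)|$ yields $|x\,\nm(x)|\,h(x) \le O(\lambda f(0))\,|x|\,2^{-|x|/(\lambda\sqrt 2)}$, whose integral over $\mathbb{R}$ is $O(\lambda^3 f(0)) = O(\sigma)$. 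This proves \eqref{nmaxint}. The sum \eqref{nmaxsum} is handled identically: $A\cap\mathbb{Z}$ contains $O(\lambda)$ integers, and $\sum_{k\in\mathbb{Z}}|k|\,2^{-|k|/(\lambda\sqrt 2)} = O(\lambda^2)$ since $\lambda \gg 1$.

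The point requiring most care is that $\nm(x)$ admits no useful pointwise bound --- it may be arbitrarily large precisely where $f(x,\cdot)$ is extremely small --- so one cannot hope to control $|\nm(x)|$ by itself. The insight is to replace a pointwise bound on $\nm$ by the uniform bound on the \emph{product} $|\nm(x)|\,h(x)$, and to read the additional exponential decay in $|x|$ out of the same concentration estimate by splitting the two variables via $\|(x,\nm(x))\|_2 \ge (|x|+|\nm(x)|)/\sqrt 2$.
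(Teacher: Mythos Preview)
Your proposal is correct and follows essentially the same route as the paper: split according to whether $(x,\nm(x))$ lies in a ball of radius $\Theta(\sigma)$, bound the near part trivially by $\lambda^3 \max f = O(\sigma)$, and on the far part invoke the concentration Lemma~\ref{LC} together with the trick $t\,2^{-t/\lambda} = O(\lambda)$ to absorb the factor $|\nm(x)|$ (the paper phrases this as ``$ye^{-y}\le 1$''). The only cosmetic differences are that the paper splits on $\max(|x|,|\nm(x)|)$ rather than on $\|(x,\nm(x))\|_2$, and does not separately state the preliminary uniform bound $|\nm(x)|\,h(x)=O(1/\sigma)$; one small inaccuracy in your write-up is the use of $h(x)\le f(0)$, which should read $h(x)\le \max f$, but since $f(0)\ge e^{-2}\max f$ this does not affect any of the $O(\cdot)$ conclusions.
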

\begin{proof}
Let $c$ be an absolute constant to be determined later and denote the set $A_{\sigma}:=\{x\in \re: |x|, |\nm(x)| \leq c \sigma\}.$ We have 
\begin{align} \nonumber    \int_{\mathbb{R}}x\nm(x)\max_yf(x,y)dx =  \int_{A_{\sigma}}x\nm(x)\max_yf(x,y)dx + \int_{A_{\sigma}^{\mathrm{C}}}x\nm(x)\max_yf(x,y)dx
\end{align}
The first term can be trivially bounded as
\begin{equation} \nonumber
    \Bigl| \int_{A_{\sigma}}x\nm(x)\max_yf(x,y)dx \Bigr| \leq O(\sigma^3)\max{f} = O(\sigma).
\end{equation}
For the second term we have, for some absolute constant $\cptwo$ and for $c$ large enough,
\begin{align}
   \nonumber
     \Bigl|\int_{A_{\sigma}^{\mathrm{C}}}|x|\nm(x)\max_yf(x,y)dx\Bigr| &\leq \max(f)\int_{A_{\sigma}^{\mathrm{C}}}{x\nm(x)e^{-\frac{|x|+\nm(x)}{\sigma}\cptwo}dx} \\ \label{justifyxnmaxinttrick}
     &\leq O\Bigl(\frac{1}{\sigma^2}\Bigr)\sigma\int_{x>c\sigma}{x e^{-\frac{x}{\sigma}\cptwo}dx}\\
    &= O(\sigma).
\end{align}
Here \eqref{justifyxnmaxinttrick} follows by using the fact that $ye^{-y}\le 1$. This gives \eqref{nmaxint},
\eqref{nmaxsum} follows in a similar way. 
\end{proof}

\begin{proposition} \label{correlationapprox}
Let $f$ be a centered, continuous, isotropic, log-concave density in $\mathbb{R}^d$, with $\Cov{(f)} = \sigma^2\mathrm{I}_{\mathrm{d}}$. Then, for all $i \neq j$
$$
\sum_{k \in \mathbb{Z}^d}f(k)k_ik_j= O_d(\sigma).
$$
\end{proposition}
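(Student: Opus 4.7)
I would proceed by induction on the dimension $d\ge 2$.

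\emph{Base case $d=2$.} Use the polarisation identity $2k_1k_2=(k_1+k_2)^2-k_1^2-k_2^2$ together with Proposition~\ref{variancediscreteapprox}, which gives $\sum_k k_1^2 f(k)=\sum_k k_2^2 f(k)=\sigma^2+O(\sigma)$, to reduce the claim to
\[
\sum_{k\in\mathbb{Z}^2}(k_1+k_2)^2 f(k) = 2\sigma^2 + O(\sigma).
\]
Let $f_+(t)=\int_\R f(s,t-s)\,ds$; by Prékopa--Leindler, $f_+$ is a centered log-concave density on $\R$ with variance $2\sigma^2$. Reindexing by $m=k_1+k_2$, the sum on the left equals $\sum_m m^2\tilde F(m)$, where $\tilde F(m)=\sum_{s\in\mathbb{Z}} f(s,m-s)$. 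The one-dimensional case of Proposition~\ref{variancediscreteapprox} applied to $f_+$ yields $\sum_m m^2 f_+(m)=2\sigma^2+O(\sigma)$; since $s\mapsto f(s,m-s)$ is log-concave and hence quasi-concave, inequality \eqref{int-sum-dim-1} gives $|\tilde F(m)-f_+(m)|\le \max_s f(s,m-s)$, so the matter reduces to showing
\[
\sum_{m\in\mathbb{Z}} m^2\max_s f(s,m-s)=O(\sigma).
\]
Using the uniform inequality $\|(s,m-s)\|_2\ge|m|/\sqrt{2}$ together with Lemma~\ref{LC}, one obtains $\max_s f(s,m-s)\lesssim f(0)\,2^{-c|m|/\sigma}$ for some $c>0$ once $|m|\gtrsim\sigma$; combined with $\max f=O(\sigma^{-2})$ (from $L_f\le L_2$ and $\max f=L_f^2/\sigma^2$), summing over $m$ gives the required $O(\sigma)$ bound.

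\emph{Inductive step $d\ge 3$.} Given $i\ne j$, choose any $\ell\in\{1,\dots,d\}\setminus\{i,j\}$ (possible since $d\ge 3$) and let $F:\R^{d-1}\to\R$ be the marginal of $f$ obtained by integrating out $x_\ell$. By Prékopa--Leindler and a direct computation, $F$ is centered, isotropic and log-concave with $\Cov(F)=\sigma^2 I_{d-1}$. The inductive hypothesis applied to $F$ (with the corresponding pair of indices) yields $\sum_{k'\in\mathbb{Z}^{d-1}} k'_i k'_j F(k')=O_d(\sigma)$. For each fixed $k'$, the function $x_\ell\mapsto f(k',x_\ell)$ is log-concave on $\R$, so \eqref{int-sum-dim-1} gives $|\sum_{k_\ell\in\mathbb{Z}} f(k',k_\ell)-F(k')|\le \max_{k_\ell} f(k',k_\ell)$. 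Writing $\sum_{k\in\mathbb{Z}^d} k_ik_j f(k)=\sum_{k'}k'_ik'_j\sum_{k_\ell}f(k',k_\ell)$ and using $|k'_ik'_j|\le\tfrac12((k'_i)^2+(k'_j)^2)$, the induced error is at most
\[
\tfrac12\sum_{k'\in\mathbb{Z}^{d-1}}\bigl((k'_i)^2+(k'_j)^2\bigr)\max_{k_\ell} f(k',k_\ell),
\]
which is $O_d(\sigma)$ by Lemma~\ref{sumofmaximalemma} with $j=1$ and $i=2$ after a permutation of coordinates placing $k_\ell$ first and $k'_i$ (or $k'_j$) as the weighted coordinate.

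\emph{Main obstacle.} The technically delicate point is the base-case estimate $\sum_m m^2\max_s f(s,m-s)=O(\sigma)$: one must lift the two-dimensional radial concentration of $f$ provided by Lemma~\ref{LC} to the anti-diagonal row maxima via the uniform bound $\|(s,m-s)\|_2\ge|m|/\sqrt{2}$. Everything else is a clean assembly of Proposition~\ref{variancediscreteapprox}, the one-dimensional comparison \eqref{int-sum-dim-1} and Lemma~\ref{sumofmaximalemma}.
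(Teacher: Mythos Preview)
Your proof is correct, and your inductive step is essentially identical to the paper's (you use the elementary bound $|k'_ik'_j|\le\frac12((k'_i)^2+(k'_j)^2)$ where the paper uses Cauchy--Schwarz, which amounts to the same thing here).

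Your base case, however, is genuinely different from the paper's and noticeably cleaner. The paper compares $\int_{\R^2} xy\,f(x,y)\,dx\,dy$ (which vanishes by isotropy) directly with $\sum_{k,m} km\,f(k,m)$ via a quadrant-by-quadrant monotonicity argument; controlling the resulting error terms requires the ad hoc Lemma~\ref{lemmaintxnmaxf}, which bounds quantities like $\int x\,N_{\max}(x)\max_y f(x,y)\,dx$ involving the \emph{location} of the fibrewise maximum. Your polarisation trick sidesteps all of this: by writing $2k_1k_2=(k_1+k_2)^2-k_1^2-k_2^2$ and invoking Proposition~\ref{variancediscreteapprox} for the axis-aligned terms, you reduce the problem to a one-dimensional second-moment estimate for the diagonal marginal $f_+$, which is itself a centered log-concave density of variance $2\sigma^2$. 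The only residual technical point is the anti-diagonal estimate $\sum_m m^2\max_s f(s,m-s)=O(\sigma)$, and your observation that $\|(s,m-s)\|_2\ge|m|/\sqrt{2}$ feeds directly into Lemma~\ref{LC} to give this cleanly. In effect, your argument replaces Lemma~\ref{lemmaintxnmaxf} by a linear change of variable plus the already-established one-dimensional case of Proposition~\ref{variancediscreteapprox}; it is shorter and more conceptual, at the modest cost of relying on Proposition~\ref{variancediscreteapprox} as a black box rather than reproving a variant of it in the cross-term setting.
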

\begin{proof}
We proceed by induction on $d$. Consider first the base case $d = 2$.
Then, letting $\nm(x) := \inf\{y \in \mathbb{R}: f(x,y) = \max_y{f(x,y)}\}$ and $\mm(y) := \inf\{x \in \re: f(x,y) = \max_x{f(x,y)}\}$ and assuming without loss of generality that the infima are achieved,
\begin{align*}
    \int_{\mathbb{R}_+^2}{f(x,y)xydx} = \int_{\mathbb{R}_+}{x\int_{0\leq y \leq \nm(x)}{f(x,y)ydy} + \int_{y>\nm(x)}{f(x,y)ydy}dx}.
\end{align*}
Observe that
\begin{align} \nonumber
  {\int_{0\leq y \leq \nm(x)}{f(x,y)ydy}} &\leq \sum_{0\leq k \leq \nm(x)}{\int_{[k,k+1)}{f(x,y)ydy}} \\ \label{firstnmax}
  &\leq \sum_{0\leq k \leq \nm(x)-1}{(k+1)f(x,k+1)} + \fm{y}(\nm(x)+1)
\end{align}
and
\begin{align} \label{ygrnmax}
    {\int_{ y > \nm(x)}{f(x,y)ydy}} &\leq \sum_{k\geq\nm(x)}{(k+1)f(x,k)}.
\end{align}
By \eqref{firstnmax} and \eqref{ygrnmax}
\begin{equation} \label{xknm}
 \int_{\mathbb{R}_+^2}{f(x,y)xydx} \leq \int_{\re}{x\Bigl(\sum_{k \geq 0}{kf(x,k)} + \sum_{k \geq \nm(x)}{f(x,k)} + \fm{y}(\nm(x)+1)\Bigr) dx}
\end{equation}
For the first term in \eqref{xknm} we have 
\begin{align} \nonumber
    \int_{\re}{x\sum_{k \geq 0}{kf(x,k)}dx} &\leq \sum_{k \geq 0}{\sum_{0\leq m\leq \mm(k)-1}{k(m+1)f(m+1,k)}} + \sum_{k \geq 0}{\sum_{m > \mm(k)}{k(m+1)f(m,k)}} \\ \nonumber
    &+ \sum_{k \geq 0}{k(\mm(k)+1)\max_xf(x,k)} \\ 
    &\leq \sum_{k\geq 0}\sum_{m \geq 0}{kmf(m,k)} + \sum_{k \geq 0}{\sum_{m \geq 0}{kf(m,k)}} + \sum_{k \geq 0}{k(\mm(k)+1)\max_xf(x,k)} \\ \label{firstterminxknm}
    &\leq \sum_{k\geq 0}\sum_{m \geq 0}{kmf(m,k)} + \sum_{k \geq 0}{\sum_{m \geq 0}{kf(m,k)}} + O(\sigma),
\end{align}
where the last inequality follows by Lemma \ref{lemmaintxnmaxf}.
On the other hand, for the second term in \eqref{xknm} we have
\begin{align} \label{secondterminxknm}
    \int_{\re}{\sum_{k\geq \nm}xf(x,k)dx} \leq \Bigl(\int_{\re}{\sum_{k\in \mathbb{Z}}x^2f(x,k)dx}\Bigr)^{\frac{1}{2}} = O(\sigma) 
\end{align}
by the Cauchy-Schwarz inequality and Proposition \ref{variancediscreteapprox}.
Finally, the third term in \eqref{xknm} is
\begin{equation} \label{thirdterminxknm}
\int_{\re}{\fm{y}(\nm(x)+1)dx} = O(\sigma)
\end{equation}
by Lemma \ref{lemmaintxnmaxf}.
Plugging \eqref{firstterminxknm}, \eqref{secondterminxknm} and \eqref{thirdterminxknm} into \eqref{xknm} we obtain 
\begin{equation} \label{firstquad}
 \int_{\mathbb{R}_+^2}{f(x,y)xydx} \leq \sum_{k\geq 0}\sum_{m \geq 0}{kmf(m,k)} + \sum_{k \geq 0}{\sum_{m \geq 0}{kf(m,k)}} + O(\sigma).
\end{equation}
Using the reverse bounds to those in \eqref{firstnmax} and \eqref{ygrnmax} obtained by the monotonicity of $f$, we get 
the lower bound 
\begin{equation} \nonumber
    \int_{\mathbb{R}_+^2}{f(x,y)xydx} \geq \sum_{k\geq 0}\sum_{m \geq 0}{kmf(m,k)} + \sum_{k \geq 0}{\sum_{m \geq 0}{kf(m,k)}} + O(\sigma).
\end{equation}
In a completely analogous way, we may bound the integral on the rest of the quadrants and obtain the approximations
\begin{align} 
     \int_{\mathbb{R}_-^2}{f(x,y)xydx} &= \sum_{k\leq 0}\sum_{m \leq 0}{kmf(m,k)} + \sum_{k \leq 0}{\sum_{m \leq 0}{kf(m,k)}} + O(\sigma), \\ 
     \int_{\mathbb{R}_-\times \re_+}{f(x,y)xydx} &= \sum_{k\geq 0}\sum_{m \leq 0}{kmf(m,k)} + \sum_{k \geq 0}{\sum_{m \leq 0}{kf(m,k)}} + O(\sigma) \quad \text{and}\\ \label{finalquad}
     \int_{\mathbb{R}_+\times \re_-}{f(x,y)xydx} &= \sum_{k\leq 0}\sum_{m \geq 0}{kmf(m,k)} + \sum_{k \leq 0}{\sum_{m \geq 0}{kf(m,k)}} + O(\sigma).
\end{align}
Since, by Proposition \ref{covdis}, $\sum_{k }{\sum_{m }{kf(m,k)}} = O(1)$, by adding \eqref{firstquad}--\eqref{finalquad} we obtain 
\begin{equation} \nonumber
   0= \int_{\mathbb{R}^2}{f(x,y)xydx} = \sum_{k\in \mathbb{Z}}\sum_{m \in \mathbb{Z}}{kmf(m,k)} + O(\sigma).
\end{equation}
This completes the base case $d = 2$.

The inductive step is similar to that in the proof of Proposition \ref{variancediscreteapprox}. Let $x\in \mathbb{R}$, $y = (y_1,\ldots,y_{d-1})\in \mathbb{R}^{d-1}$ and let  
$F(y):=\int_{\mathbb{R}} f(x,y)dx$. Then, by the inductive hypothesis, for $1\leq i,j \leq d-1,$
\begin{equation}\label{indhyij}
    \int_{\mathbb{R}^{d}}f(x,y)y_{i}y_jdy=\int_{\mathbb{R}^{d-1}}F(y)y_{i}y_jdy=\sum_{k\in \mathbb{Z}^{d-1}}F(k)k_{i}k_j+O_d(\sigma) \hspace{0.1cm} .
\end{equation}
Since for every $k = (k_1,\ldots, k_{d-1})\in\mathbb{Z}^{d-1}$, the function $f(\cdot,k)$ is log-concave $\mathbb{R}$, we have by Proposition \ref{sum-int} 
\begin{equation}\label{indhypintegralapprox}
    F(k)-\sum_{k^{\prime}\in \mathbb{Z}} f(k^{\prime},k) = \int_{\mathbb{R}} f(x,k)dx-\sum_{k^{\prime}\in \mathbb{Z}} f(k^{\prime},k) = O_d\bigl( \max_{x} f(x,k)\bigr) \hspace{0.1cm} .
\end{equation}
So by \eqref{indhyij} and \eqref{indhypintegralapprox} we get, for $1\leq i,j \leq d-1$
\begin{equation} \label{integralcrossstep}
\int_{\mathbb{R}^{d}}f(x,y)y_{i}y_jdxdy=\sum_{ k^{\prime} \in \mathbb{Z},k \in \mathbb{Z}^{d-1} }f(k^{\prime},k)k_{i}k_j+\sum_{k\in \mathbb{Z}^{d-1}}O\Bigl(\max_{x}f(x,k)\Bigr)k_{i}k_j+O_d(\sigma).
\end{equation}
But by the Cauchy-Schwarz inequality and Lemma \ref{sumofmaximalemma}, $\sum_{k\in \mathbb{Z}^{d-1}}O\Bigl(\max_{x}f(x,k)\Bigr)k_{i}k_j=O_d(\sigma)$ and the result follows.
\end{proof}

\begin{corollary} \label{detapprox}
Let $f$ be a centered, isotropic, log-concave density in $\mathbb{R}^d$, with $\Cov{(f)} = \sigma^2\mathrm{I}_d.$ Then
$$
 \det\Bigl(\Cov_{\mathbb{Z}^d}{(f)}\Bigr) =\sigma^{2d}+ O_d(\sigma^{2d-1}).
$$
\end{corollary}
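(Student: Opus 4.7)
The plan is to combine the entry-wise approximations of the discrete covariance matrix already established in Propositions \ref{sum-int}, \ref{covdis}, \ref{variancediscreteapprox} and \ref{correlationapprox}, and then carry out an elementary determinant expansion.

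First I would compute $[\Cov_{\mathbb{Z}^d}(f)]_{ij}$ entry by entry from its definition. The denominator $\sum_k f(k) = 1 + O_d(\sigma^{-1})$ by Proposition \ref{sum-int}. The ``product of means'' term $\sum_k k_i f(k) \sum_k k_j f(k)$ is $O_d(1)$ by Proposition \ref{covdis}, so after dividing by $\bigl(\sum_k f(k)\bigr)^2 = 1 + O_d(\sigma^{-1})$ it contributes only $O_d(1)$. For the quadratic term $\sum_k k_i k_j f(k)$, Proposition \ref{variancediscreteapprox} gives $\sigma^2 + O_d(\sigma)$ when $i = j$, and Proposition \ref{correlationapprox} gives $O_d(\sigma)$ when $i \neq j$. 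Dividing by $1 + O_d(\sigma^{-1})$ preserves these bounds. Hence one obtains
\[
\Cov_{\mathbb{Z}^d}(f) = \sigma^2 I_d + A,
\]
where $A$ is a symmetric $d \times d$ matrix all of whose entries are $O_d(\sigma)$.

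Next I would expand the determinant using the Leibniz formula
\[
\det\bigl(\sigma^2 I_d + A\bigr) = \sum_{\pi \in S_d} \mathrm{sgn}(\pi) \prod_{i=1}^d \bigl(\sigma^2 I_d + A\bigr)_{i,\pi(i)}.
\]
The identity permutation contributes $\prod_{i=1}^d (\sigma^2 + O_d(\sigma)) = \sigma^{2d} + O_d(\sigma^{2d-1})$ by expanding the product. For any non-identity permutation $\pi$, the number $k$ of non-fixed points satisfies $k \ge 2$; the $k$ off-diagonal factors $A_{i,\pi(i)}$ are each $O_d(\sigma)$, while the remaining $d-k$ diagonal factors are $O_d(\sigma^2)$. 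Thus the contribution is $O_d(\sigma^k \cdot \sigma^{2(d-k)}) = O_d(\sigma^{2d - k}) = O_d(\sigma^{2d-2})$, which is absorbed into $O_d(\sigma^{2d-1})$.

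Summing these two contributions yields exactly
\[
\det\bigl(\Cov_{\mathbb{Z}^d}(f)\bigr) = \sigma^{2d} + O_d(\sigma^{2d-1}),
\]
as claimed. There is no real obstacle here: the substance of the corollary is already packed into the preceding propositions, and the only remaining issue is the routine algebraic bookkeeping in the determinant expansion, which only requires that the diagonal error is $O_d(\sigma)$ and the off-diagonal entries are $O_d(\sigma)$ — precisely what the previous propositions supply.
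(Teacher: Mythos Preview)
Your proposal is correct and follows essentially the same approach as the paper: establish that the discrete covariance entries satisfy $[\Cov_{\mathbb{Z}^d}(f)]_{ij}=\sigma^2+O_d(\sigma)$ on the diagonal and $O_d(\sigma)$ off the diagonal, then expand the determinant via the Leibniz formula, isolating the identity permutation as the main term and bounding all other permutations by $O_d(\sigma^{2d-2})$. If anything, you are slightly more explicit than the paper in tracking the normalisation $\sum_k f(k)=1+O_d(\sigma^{-1})$ from Proposition~\ref{sum-int} and the mean terms from Proposition~\ref{covdis} when passing from the raw second moments of Propositions~\ref{variancediscreteapprox} and~\ref{correlationapprox} to the actual covariance entries, but this does not constitute a different method.
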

\begin{proof}
By Proposition \ref{correlationapprox}
$$
[\Cov_{\mathbb{Z}^d}(f)]_{ij} = O_d(\sigma), \quad \text{for } i \neq j
$$
and  
$$
[\Cov_{\mathbb{Z}^d}(f)]_{ij} = \sigma^2 + O_d(\sigma), \quad \text{for } i = j.
$$  
\noindent
Thus, 
\begin{align*}
\det\Bigl(\Cov_{\mathbb{Z}^d}{(f)}\Bigr) &= \sum_{\tau}\mathrm{sgn}(\tau)\prod_{i=1}^d\Cov_{\mathbb{Z}^d}(f)_{i\tau(i)}\\
&= \sigma^{2d} + O(\sigma^{2d-1}),
\end{align*}
since for $\tau$ being the identity permutation $\mathrm{sgn}(\tau)\prod_{i=1}^d{\Cov_{\mathbb{Z}^d}{(f)}_{i\tau(i)}} = \bigl(\sigma^{2}+O(\sigma)\bigr)^d = \sigma^{2d}+O(\sigma^{2d-1})$ and for any other permutation $\tau^{\prime}$
$$
\prod_{i=1}^d{\Cov_{\mathbb{Z}^d}{(f)}_{i\tau^{\prime}(i)}} = O((\sigma^2)^{d-2}\sigma^2) = O_d(\sigma^{2d-2}).
$$
\end{proof}

Finally, Corollary \ref{corfisotropic} below is a version of Theorem \ref{discreteUB} with the assumption that the continuous function $f$ is isotropic.
It can  be seen that due to use of Corollary \ref{detapprox}, $\sigma$ needs to be taken at least $\Omega(d!)$. We do not know whether this is the best (lowest) rate. 
\begin{corollary} [{\bf Theorem \ref{discreteUB} for isotropic $f$}]\label{corfisotropic}
Let $p$ be log-concave p.m.f. on $\mathbb{Z}^d$, whose continuous log-concave extension, say $f$, is isotropic. Then there exists an absolute constant $C^{\prime}$ 
such that 
$$
\max_{k \in \mathbb{Z}^d}{p(k)} \leq \frac{C^{\prime}}{\det\Bigl(\Cov(p)\Bigr)^{\frac{1}{2}}}
$$
provided that $\sigma := \det{\Bigl(\Cov_{\mathbb{R}}{(f)}\Bigr)}$ is large enough depending on $d$.
\end{corollary}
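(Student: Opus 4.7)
The plan is to reduce the discrete statement to the continuous bound $\max_{\mathbb{R}^d} f \leq L_d^d/\sigma^d$ — which follows immediately from the definition \eqref{Lfdef} of $L_f$ together with Klartag's bound \eqref{Lfbound} — and then to replace the continuous quantities by their discrete counterparts using the approximations proved in this section.

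Concretely, set $Z := \sum_{k \in \mathbb{Z}^d} f(k)$. Since $f$ is a density and agrees in shape with $p$ on $\mathbb{Z}^d$, we have $p(k) = f(k)/Z$ for every $k \in \mathbb{Z}^d$. Hence $\Cov_{\mathbb{Z}^d}(p) = \Cov_{\mathbb{Z}^d}(f)$ (the discrete covariance formula is invariant under a positive rescaling of the weights) and $\max_k p(k) \leq (\max_{\mathbb{R}^d} f)/Z$. The three ingredients of the estimate are then: (i) the continuous $L_d$ bound, giving $\max_{\mathbb{R}^d} f \leq L_d^d/\sigma^d$, where $\sigma^{2d} = \det \Cov_{\mathbb{R}}(f)$; (ii) Proposition \ref{sum-int}, which yields $Z = \int_{\mathbb{R}^d} f + O_d(\sigma^{-1}) = 1 + O_d(\sigma^{-1})$, so $Z \geq 1/2$ once $\sigma$ is large enough depending on $d$; and (iii) Corollary \ref{detapprox}, which yields $\det \Cov_{\mathbb{Z}^d}(p) = \sigma^{2d}\bigl(1 + O_d(\sigma^{-1})\bigr)$. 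Multiplying the first estimate by the square root of the third and dividing by the second gives
\[
\max_k p(k) \cdot \sqrt{\det \Cov_{\mathbb{Z}^d}(p)} \;\leq\; \frac{L_d^d}{Z}\bigl(1 + O_d(\sigma^{-1})\bigr)^{1/2} \;=\; L_d^d\bigl(1 + O_d(\sigma^{-1})\bigr),
\]
which, for $\sigma$ large enough depending on $d$, is bounded by a dimensional constant such as $C'_d := 2 L_d^d$.

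The only mild wrinkle I foresee is that Corollary \ref{detapprox} (via Propositions \ref{covdis}, \ref{variancediscreteapprox}, \ref{correlationapprox}) is stated for a \emph{centered} density, while isotropicity in the sense used here does not force $f$ to be centered. I would handle this by first translating $p$ by an integer vector so that its discrete barycenter lies in $[0,1)^d$, noting that $\max_k p(k)$ and $\Cov_{\mathbb{Z}^d}(p)$ are both unaffected; the remaining $O_d(1)$ shift between $\int x f\,dx$ and the nearest lattice point can be absorbed into the $O_d(\sigma^{-1})$ error terms, using the log-concave tail control of Lemma \ref{LC} to compare the sums $\sum_k f(k) k_i k_j$ under an $O_d(1)$ translation. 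Modulo this bookkeeping, the whole argument is a direct consequence of the discrete-to-continuous comparisons already established in this section, and I do not expect any genuine technical obstacle.
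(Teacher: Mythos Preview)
Your approach is essentially identical to the paper's: bound $\max_k p(k)$ by $\max_{\mathbb{R}^d} f$, invoke the continuous $L_d$ bound, and then replace $\int f$ and $\sigma^d$ by their discrete counterparts via Proposition~\ref{sum-int} and Corollary~\ref{detapprox}. The only difference is a harmless normalization convention (the paper keeps $f(k)=p(k)$, so $f$ is \emph{not} a density and the factor $\int f$ is carried through instead of your $Z$); your centering caveat is a detail the paper passes over in silence.
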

\begin{proof}
Since $p$ is extensible log-concave, there exists a continuous log-concave function $f$ (not necessarily a density) such that $f(k) = p(k)$ for all $k\in \mathbb{Z}^d$ and by assumption $f$ is isotropic. Thus 
\begin{equation} \label{Lfbounduse}
\max_{k \in \mathbb{Z}^d}p(k) \leq \max_{x\in \mathbb{R}^d}f(x)=\frac{L_f^d\int_{\mathbb{R}^d} f}{\sigma^d} \leq \frac{C\int_{\mathbb{R}^d}{f}}{\sigma^d} \hspace{0.1cm}, 
\end{equation}
where $C$ is an upper bound of $L_{f}^{d}$, which can be taken to be an absolute constant as guaranteed by the Isotropic Constant Theorem \ref{klartgalehec}. 
Noting that by definition $\Cov{(Zp)} = \Cov{(p)}$ for any normalizing constant $Z \in \R_+$ and applying Proposition \ref{sum-int} and Corollary \ref{detapprox} to  $\frac{f(\cdot)}{\int{f}}$, we obtain
\begin{equation} \nonumber
\max_{k \in \mathbb{Z}^d}p(k) \leq \frac{C(1+O_d(\frac{1}{\sigma}))}{\Bigl(\det\Bigl(\Cov{(p)}\Bigr) + O_d(\sigma^{2d-1})\Bigr)^{1/2}} \leq \frac{2C}{\det\Bigl(\Cov{(p)}\Bigr)^{\frac{1}{2}}}
\end{equation}
provided that $\sigma$ is large enough depending on $d$, using that $O_d(\sigma^{2d-1}) \geq  -\frac{1}{4}\det\Bigl(\Cov{(p)}\Bigr) \simeq -\frac{1}{4}\sigma^{2d}$ as $\sigma \to \infty$. Note that implicitly we also used $\int{f} = O_d(\sum_k{f(k)})$ (for $f$ not necessarily a density) which can be seen directly from the proof of Proposition \ref{sum-int}. The result follows with $C^{\prime} = 2C$.
\end{proof}


\begin{proposition} \label{telescopinglemma}
Fix $d \geq 1$ and let $p$ be a log-concave  p.m.f. on $\mathbb{Z}^d$ with almost isotropic extension. Then, for every $1\le i\le d$, 
\begin{equation*}
   \sum_{k \in \mathbb{Z}^d}|p(k) - p(k-e_i)|=O_d\left(\frac{1}{\det{\left(\Cov{(p)}\right)^{\frac{1}{2d}}}}\right),
\end{equation*}
where  $e_i \in \mathbb{Z}^d$ is defined as the vector with the $i$-th coordinate $1$ and all the other coordinates $0$.
\end{proposition}
\begin{proof}
For any one-dimensional quasi-concave non-negative real sequence $(a_n)_{n\in\Z}$, $\sum_{n\in\Z}|a_n-a_{n-1}|\le2\max_{n\in\Z}a_n$. Thus, by log-concavity, denoting $\overline{k_{i}}=\sum_{j\neq i}k_je_j\in \Z^{d-1}$, one has
\begin{equation*}
    \sum_{k \in \mathbb{Z}^d}|p(k) - p(k-e_i)| \le 2 \sum_{\overline{k_{i}}\in \mathbb{Z}^{d-1}}\max_{k_{i}\in \mathbb{Z}}p(k_1,\ldots,k_{i-1},k_{i},k_{i+1},\ldots,k_d), 
\end{equation*}
Hence, as in \eqref{sumofmaxima}, we have 
\begin{equation*}
\sum_{\overline{k_{i}}\in \mathbb{Z}^{d-1}}\max_{k_{i}\in \mathbb{Z}}p(k_1,\ldots,k_{i-1},k_{i},k_{i+1},\ldots,k_d)=O_d\Bigl(\frac{1}{\det{\bigl(\Cov{(p)}\bigr)^{\frac{1}{2d}}}}\Bigr) 
\end{equation*}
and the result follows.
\end{proof}

We are ready to give the proof of Theorem \ref{logconcavesatisfyTh}:
\begin{proof}[Proof of Theorem \ref{logconcavesatisfyTh}]
Part \ref{logconcave1} follows directly from Corollary \ref{corfisotropic} (in fact with $c_1(d)$ being independent of the dimension $d$). Part \ref{logconcave2} follows from Lemma \ref{LC}, the well-known fact that for a centered log-concave function $f$ on $\R^d$ 
$$f(0) \simeq_d \frac{1}{\det{(\mathrm{Cov}(f))^{\frac{1}{2}}}}$$ and Corollary \ref{detapprox}. Finally, Part \ref{logconcave3} follows directly from Proposition \ref{telescopinglemma}. 
\end{proof}

\subsection{Relaxing the isotropicity assumption on $f$} \label{pisotropicsection}

In this section, we relax the assumptions of Lemma \ref{KB}, so that we obtain conclusions of the same kind under some weaker assumptions on the eigenvalues of the continuous covariance matrix $\Cov_{\mathbb{R}}{(f)}$. Specifically, we will prove:



\begin{proposition} \label{noisot}
For every centered and log-concave density $f:\mathbb{R}^{d}\rightarrow \mathbb{R}$ and for every $\theta\in \mathbb{S}^{d-1}$ one has 
\begin{equation*}\label{relaxin}
c_1^{d+2}f(0)^\frac{1}{d}\sqrt{\lambda_{\min}(\Cov(f))}
 \le \left(\int_{0}^{\infty}r^{d-1}f(r\theta)dr\right)^\frac{1}{d}\le(d+1) c_2^{d+2}f(0)^\frac{1}{d}\sqrt{\lambda_{\max}(\Cov(f))}.
\end{equation*}
where $c_{1}$ and $c_{2}$ are the constants appearing in \eqref{inclusion-d-d+1}.
\end{proposition}
Before giving the proof, we mention the consequences of Proposition \ref{noisot}.
\begin{corollary} \label{cornotiso}
Let $f$ be a a centered, almost isotropic log-concave density. Then for every $\theta\in \mathbb{S}^{d-1}$
\begin{equation*}
 \int_{0}^{\infty}r^{d-1}f(r\theta)dr = \Theta_d(1),
\end{equation*}
as $\sigma := \det{(\Cov{(f)})} \to \infty$.
\end{corollary}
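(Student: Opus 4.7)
The plan is to read off the corollary directly from Proposition \ref{noisot} after controlling two quantities: the extreme eigenvalues of $\Cov(f)$ and the value $f(0)$. Proposition \ref{noisot} gives
\[
f(0)\bigl(\sqrt{\lambda_{\min}(\Cov(f))}\bigr)^{d}
\lesssim_{d} \int_{0}^{\infty}r^{d-1}f(r\theta)\,dr
\lesssim_{d} f(0)\bigl(\sqrt{\lambda_{\max}(\Cov(f))}\bigr)^{d},
\]
so it suffices to prove that both $f(0)\lambda_{\min}^{d/2}$ and $f(0)\lambda_{\max}^{d/2}$ are $\Theta_d(1)$ as the isotropic parameter $s:=\det(\Cov(f))^{1/(2d)}\to\infty$.

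First I would control the eigenvalues. By the remark following Definition \ref{almostisotropic}, almost isotropicity says $\|\Cov(f)-s^{2}I_{d}\|_{\mathrm{op}}=O(s)$, so Weyl's inequality gives $\lambda_{i}(\Cov(f))=s^{2}+O(s)$ for every $i$. Consequently $\lambda_{\min}^{d/2}$ and $\lambda_{\max}^{d/2}$ are both equal to $s^{d}(1+O(1/s))^{d/2}$, which is $\simeq_{d} s^{d}$ for $s$ large.

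Next I would pin down $f(0)$. Since $\int f=1$, the definition \eqref{Lfdef} of the isotropic constant reads $L_{f}=\max(f)^{1/d}\det(\Cov(f))^{1/(2d)}=\max(f)^{1/d}s$. The universal bounds $L_{f}\ge 1/\sqrt{2\pi e}$ and $L_{f}\le L_{d}$ (with $L_{d}\lesssim\sqrt{\log d}$ by \eqref{Lfbound}) yield $\max(f)\simeq_{d} s^{-d}$. Combined with $f(0)\ge e^{-d}\max(f)$ from \cite[Theorem 4]{fradelizi} and the trivial $f(0)\le\max(f)$, this gives $f(0)\simeq_{d} s^{-d}$.

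Putting the two estimates into Proposition \ref{noisot} gives $\int_{0}^{\infty}r^{d-1}f(r\theta)\,dr\simeq_{d} s^{-d}\cdot s^{d}=\Theta_{d}(1)$, which is the claim. The argument is essentially immediate once Proposition \ref{noisot} is in hand; there is no real obstacle — the only thing to be careful about is that the implicit constants truly depend only on $d$, which is why we need the dimensional (rather than a universal) bound $L_{d}$ on the isotropic constant together with Fradelizi's $f(0)\ge e^{-d}\max(f)$.
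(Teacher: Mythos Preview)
Your proposal is correct and follows essentially the same approach as the paper: both deduce the corollary directly from Proposition \ref{noisot} by showing (i) $\lambda_{\min}(\Cov(f))\simeq_d\lambda_{\max}(\Cov(f))\simeq_d s^{2}$ from almost isotropicity, and (ii) $f(0)\simeq_d\max f\simeq_d s^{-d}$ via the bounds on $L_f$ together with Fradelizi's inequality $f(0)\ge e^{-d}\max f$. Your write-up is simply more explicit than the paper's two-line version.
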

\begin{proof}
Since $f$ is almost isotropic, we have 
$$
\lambda_{\mathrm{min}}\big(\Cov(f)\big) \simeq_d \lambda_{\mathrm{max}}\big(\Cov(f)\big) \simeq_d \sigma^2
$$
as $\sigma \to \infty$. The result follows from Proposition \ref{noisot} since $f(0) \simeq_d \max{f} \simeq_d \frac{1}{\sigma^d}$.
\end{proof}
\begin{remark} \label{remarkrelaxassumption}
The isotropicity assumption was only used in Lemma \ref{KB}, which in turn allowed us to prove the concentration Lemma \ref{LC} which was repeatedly evoked in Section \ref{logconcavesection} to bound the sums of maxima of log-concave functions and thus also the error terms. By Corollary \ref{cornotiso}, a version of Lemma \ref{LC} holds true for almost isotropic densities, although with possibly different constants and for large enough $\sigma$. Therefore, all the previous results of Section \ref{logconcavesection} hold true and thus also the statements of Theorem \ref{EPI} and \ref{diffentropyapprox} under the almost isotropic log-concave assumption.
\end{remark}

\begin{remark}
Note that if the function $f$ is in addition isotropic i.e.  
$\Cov{(f)} = \sigma^2\mathrm{I}_d$, one has that $f(0)\simeq_{d} \frac{1}{\sigma^{d}}$ and so we recover the result of Lemma \ref{KB}. 
\end{remark}




For the above-mentioned proposition, the following lemma which gives bounds for the inradius and circumradius of convex bodies in not necessarily isotropic position will be essential . 
\begin{proof}[Proof of Proposition \ref{noisot}]
Let $f:\mathbb{R}^{d}\rightarrow \mathbb{R}$ be a centered  log-concave density. For simplicity, we denote $\lambda_{\min}=\lambda_{\min}(\Cov(f))$ and analogously for $\lambda_{\max}$. Following the same steps as in the proof of Lemma \ref{KB}, we get that, for any $u\in \mathbb{S}^{d-1}$,
\[
\frac{\lambda_{\min}}{f(0)}\le\int_{K_{d+2}(f)}\langle x,u\rangle^2dx=\frac{1}{f(0)}\int_{\R^d}\langle x,u\rangle^2f(x)dx\le\frac{\lambda_{\max}}{f(0)}.
\]
Using the inclusion relations \eqref{inclusion-d-d+1}, this gives 
\[
c_1^{2d+2}\lambda_{\min}\le\frac{1}{|K_{d+1}(f)|}\int_{K_{d+1}(f)}\langle x,u\rangle^2dx\le c_2^{2d+2}\lambda_{\max}.
\]
Using these inequalities, \eqref{h_K-d-x-u-square} and again the inclusion relations \eqref{inclusion-d-d+1}, we get 
\[
\sqrt{\frac{d+2}{d}}c_1^{d+2} \sqrt{\lambda_{\min}} B_2^d\subset  K_d(f)\subset\sqrt{d(d+2)}c_2^{d+2} \sqrt{\lambda_{\max}}  B_2^d,
\]
which gives inequality \eqref{relaxin}.

\end{proof}

\section{Concluding Remark and Open Question} \label{concludingsection}

For Theorem \ref{logconcavesatisfyTh} and therefore for Theorems \ref{EPI} and \ref{diffentropyapprox} as well, we have assumed that there exists a continuous extension $f$, which is almost isotropic. We have then shown that $f$ is also almost isotropic in the discrete sense. It would be more natural to start with the assumption that the discrete p.m.f. is isotropic. However, in this case we would not be able to use the continuous toolkit to prove our concentration lemma in Section \ref{concentrationlemmasec}. Nevertheless, we suspect that if the discrete p.m.f. is isotropic, then there exists a continuous extension that is almost isotropic and therefore our results for discrete log-concave distributions would still hold under this assumption: 
\begin{question}
Let $X$ be a log-concave random vector with p.m.f.  $p$ on $\mathbb{Z}^d$. 
Assume that $p$ is almost isotropic. Is there a continuous log-concave extension of $p$ on $\mathbb{R}^d$ which is almost isotropic? 
\end{question}

\vspace{2 cm}

\textbf{Data Availability Statement:} 
This study is based on theoretical research and does not involve any empirical data. As such, there are no data sets associated with this manuscript.

\vspace{2 cm}


\bibliographystyle{abbrv}
\bibliography{logconcave} 

\begin{thebibliography}{10}

\bibitem{artstein}
S.~Artstein, K.~Ball, F.~Barthe, and A.~Naor.
\newblock Solution of {S}hannon’s problem on the monotonicity of entropy.
\newblock {\em Journal of the American Mathematical Society}, 17(4):975--982, 2004.

\bibitem{Ball88}
K.~Ball.
\newblock Logarithmically concave functions and sections of convex sets in {${\bf R}^n$}.
\newblock {\em Studia Math}, 88(1):69--84, 1988.

\bibitem{barronentropy}
A.~R. Barron.
\newblock Entropy and the central limit theorem.
\newblock {\em The Annals of Probability}, pages 336--342, 1986.

\bibitem{bizeul}
P.~Bizeul.
\newblock The slicing conjecture via small ball estimates.
\newblock {\em arXiv preprint arXiv:2501.06854}, 2025.

\bibitem{bobkov2022concentration}
S.~G. Bobkov, A.~Marsiglietti, and J.~Melbourne.
\newblock Concentration functions and entropy bounds for discrete log-concave distributions.
\newblock {\em Combinatorics, Probability and Computing}, 31(1):54--72, 2022.

\bibitem{Giannopoulos14}
S.~Brazitikos, A.~Giannopoulos, P.~Valettas, and B.-H. Vritsiou.
\newblock {\em Geometry of isotropic convex bodies}, volume 196.
\newblock American Mathematical Soc., 2014.

\bibitem{polytopesbook}
W.~Bruns and J.~Gubeladze.
\newblock {\em Polytopes, rings, and K-theory}, volume~27.
\newblock Springer, 2009.

\bibitem{fradelizi}
M.~Fradelizi.
\newblock Sections of convex bodies through their centroid.
\newblock {\em Archiv der Mathematik}, 69(6):515--522, 1997.

\bibitem{fradelizi2}
M.~Fradelizi.
\newblock Hyperplane sections of convex bodies in isotropic position.
\newblock {\em Beitr\"{a}ge Algebra Geom.}, 40(1):163--183, 1999.

\bibitem{gavalakis}
L.~Gavalakis.
\newblock Approximate discrete entropy monotonicity for log-concave sums.
\newblock {\em Combinatorics, Probability and Computing}, page 1–14, 2023.

\bibitem{logconcaveisit}
L.~Gavalakis.
\newblock Discrete generalised entropy power inequalities for log-concave random variables.
\newblock In {\em 2023 IEEE International Symposium on Information Theory (ISIT)}, pages 42--47, 2023.

\bibitem{discreteCLT}
L.~Gavalakis and I.~Kontoyiannis.
\newblock Entropy and the discrete central limit theorem.
\newblock {\em Stochastic Processes and their Applications}, page 104294, 2024.

\bibitem{guan}
Q.~Guan.
\newblock A note on {B}ourgain's slicing problem.
\newblock {\em arXiv preprint arXiv:2412.09075}, 2024.

\bibitem{adaptivesensing}
S.~Haghighatshoar, E.~Abbe, and E.~Telatar.
\newblock Adaptive sensing using deterministic partial hadamard matrices.
\newblock In {\em 2012 IEEE International Symposium on Information Theory Proceedings}, pages 1842--1846. Ieee, 2012.

\bibitem{abbe}
S.~Haghighatshoar, E.~Abbe, and I.~E. Telatar.
\newblock A new entropy power inequality for integer-valued random variables.
\newblock {\em IEEE transactions on information theory}, 60(7):3787--3796, 2014.

\bibitem{harremoesepibinomial}
P.~Harremo{\'e}s, C.~Vignat, et~al.
\newblock An entropy power inequality for the binomial family.
\newblock {\em JIPAM. J. Inequal. Pure Appl. Math}, 4(5):93, 2003.

\bibitem{hoggar}
S.~Hoggar.
\newblock Chromatic polynomials and logarithmic concavity.
\newblock {\em Journal of Combinatorial Theory, Series B}, 16(3):248--254, 1974.

\bibitem{KLS}
R.~Kannan, L.~Lov{\'a}sz, and M.~Simonovits.
\newblock Isoperimetric problems for convex bodies and a localization lemma.
\newblock {\em Discrete \& Computational Geometry}, 13:541--559, 1995.

\bibitem{klartaglehec}
B.~Klartag and J.~Lehec.
\newblock Affirmative resolution of bourgain's slicing problem using guan's bound.
\newblock {\em arXiv preprint arXiv:2412.15044}, 2024.

\bibitem{mokshayepi}
M.~Madiman and A.~Barron.
\newblock Generalized entropy power inequalities and monotonicity properties of information.
\newblock {\em IEEE Transactions on Information Theory}, 53(7):2317--2329, 2007.

\bibitem{madimanbernoulli}
M.~Madiman, J.~Melbourne, and C.~Roberto.
\newblock Bernoulli sums and {R}{\'e}nyi entropy inequalities.
\newblock {\em Bernoulli}, 29(2):1578--1599, 2023.

\bibitem{madimanmajorization}
M.~Madiman, L.~Wang, and J.~O. Woo.
\newblock Majorization and {R}{\'e}nyi entropy inequalities via {S}perner theory.
\newblock {\em Discrete Mathematics}, 342(10):2911--2923, 2019.

\bibitem{madimancyclic}
M.~Madiman, L.~Wang, and J.~O. Woo.
\newblock Entropy inequalities for sums in prime cyclic groups.
\newblock {\em SIAM Journal on Discrete Mathematics}, 35(3):1628--1649, 2021.

\bibitem{Murota}
K.~Murota.
\newblock {\em Discrete convex analysis}.
\newblock SIAM Monographs on Discrete Mathematics and Applications. Society for Industrial and Applied Mathematics (SIAM), Philadelphia, PA, 2003.

\bibitem{shannon1948mathematical}
C.~E. Shannon.
\newblock A mathematical theory of communication.
\newblock {\em The Bell system technical journal}, 27(3):379--423, 1948.

\bibitem{shlyak}
D.~Shlyakhtenko.
\newblock A free analogue of {S}hannon's problem on monotonicity of entropy.
\newblock {\em Adv. Math.}, 208(2):824--833, 2007.

\bibitem{stam}
A.~J. Stam.
\newblock Some inequalities satisfied by the quantities of information of {Fisher and Shannon}.
\newblock {\em Information and Control}, 2(2):101--112, 1959.

\bibitem{tao_sumset_entropy}
T.~Tao.
\newblock Sumset and inverse sumset theory for {S}hannon entropy.
\newblock {\em Combinatorics, Probability and Computing}, 19:603--639, 07 2010.

\bibitem{tulino}
A.~M. Tulino and S.~Verd\'{u}.
\newblock Monotonic decrease of the non-{G}aussianness of the sum of independent random variables: a simple proof.
\newblock {\em IEEE Trans. Inform. Theory}, 52(9):4295--4297, 2006.

\bibitem{woo2015discrete}
J.~O. Woo and M.~Madiman.
\newblock A discrete entropy power inequality for uniform distributions.
\newblock In {\em 2015 IEEE International Symposium on Information Theory (ISIT)}, pages 1625--1629. IEEE, 2015.

\end{thebibliography}

\end{document}